	\theoremstyle{plain}
	\newtheorem{thm}{Theorem}[section]
	\newtheorem{lem}[thm]{Lemma}
	\newtheorem{prop}[thm]{Proposition}
	\theoremstyle{definition}
	\newtheorem{dfn}[thm]{Definition}
	\theoremstyle{remark}
	\newtheorem{rem}[thm]{Remark}
	\newtheorem{expl}[thm]{Example}
	\setlist[enumerate,1]{leftmargin=0pt,itemindent=17pt,label=\textup{(\arabic*)}}
	\setlist[itemize]{leftmargin=*}
\begin{document}
\title{A Fraïssé theoretic approach to the Jiang--Su algebra}
\author[S.~Masumoto]{Shuhei MASUMOTO}
\address[S.~Masumoto]{Graduate~School~of~Mathematical=Sciences, the~University~of~Tokyo}
\email{masumoto@ms.u-tokyo.ac.jp}
\keywords{Fraïssé theory; Jiang--Su algebra}
\subjclass[2010]{Primary 47L40; Secondary 03C98}

\begin{abstract}
In this paper, we give a Fraïssé theoretic proof of the result of X.~Jiang and H.~Su that 
the Jiang--Su algebra is the unique monotracial simple C*-algebra among 
all inductive limits of prime dimension drop algebras.  
The proof presented here is self-contained and quite elementary, 
and does not depend on any K-theoretic technology.  
We also partially recover the fact that every unital endomorphism of 
the Jiang--Su algebra is approximately inner.  
\end{abstract}

\maketitle


\section{Introduction}

The Jiang--Su algebra was originally introduced by Jiang and Su 
in~\cite{jiang99:_simple_unital} 
as a C*-algebraic analog of the hyperfinite type $\mathrm{II}_1$-factor.  
This algebra is characterized as the unique simple monotracial C*-algebra among 
all inductive limits of prime dimension drop algebras 
(i.e.~certain algebras of matrix-valued continuous functions on 
the closed interval $[0, 1]$).  
In addition to being simple and having unique tracial state, 
it is separable, nuclear and infinite-dimensional, 
and have the same K-theory as the complex numbers $\mathbb{C}$.  
Furthermore, every unital endomorphism of this algebra is approximately inner, 
and it tensorially absorbs itself, i.e.~$\mathcal{Z} \otimes \mathcal{Z} \simeq \mathcal{Z}$.  
Because of these properties, it plays a central role in 
the Elliott's classification program of separable nuclear C*-algebras 
via K-theoretic invariants~\cite{elliott08:_regularity_properties}.  

The degree of difficulty in proving the properties of the Jiang--Su algebra 
varies from one to another.  It is immediate from the construction that 
the Jiang--Su algebra is a unital simple separable nuclear infinite-dimensional C*-algebra 
with a unique tracial state and has the same K-theory as $\mathbb{C}$.  
Compared with that, the proofs of the other properties 
that were presented in~\cite{jiang99:_simple_unital} are relatively difficult.  
For example, the uniqueness result of the Jiang--Su algebra among the inductive limits of 
prime dimension drop algebras is implied as a corollary of the complete classification of 
the unital infinite-dimensional simple inductive limits of 
direct sums of dimension drop algebras 
via K-groups and trace spaces~\cite[Theorem~6.2]{jiang99:_simple_unital}.  
The construction of isomorphisms in the classification result is carried over 
by the standard approximate intertwining argument, and for this, 
one has to find an appropriate sequence of morphisms between direct sums of dimension drop algebras 
which is convergent to the isomorphism between the limits.  
This sequence of morphisms are obtained by studying the KK-theory of dimension drop algebras 
and observing how to lift a morphism from a KK-element.  
Such an observation is also used to prove that every unital endomorphism of the Jiang--Su algebra 
is approximately inner.  

It is true that, since KK-theory is a powerful tool, calculating KK-groups of 
dimension drop algebras is significant, 
and that the classification theorem is fairly interesting on its own right.  
On the other hand, it would be natural to ask whether there are more elementary proofs of 
these properties of the Jiang--Su algebra, 
taking the fundamental importance of the algebra into account.  

In this paper, we give an alternative proof that 
a simple monotracial inductive limit of prime dimension drop algebras is 
unique up to isomorphism and every unital endomorphism of 
this inductive limit is approximately inner.  
The proof does not rely on KK-theory, 
but instead it uses Fraïssé theory.  

Fraïssé theory is a topic in model theory 
where a bijective correspondence between certain classes consisting of 
finitely generated structures and countable structures 
with a certain homogeneity property is established.  
By definition, a countable structure is said to be \emph{ultra-homogeneous} if 
every isomorphism between its two finitely generated substructures 
extends to an automorphism.  
The class corresponding to an ultra-homogeneous structure $\mathcal{M}$ 
in the context of Fraïssé theory is 
$\operatorname{Age} M$, the class of all finitely generated structures 
which are embeddable into $\mathcal{M}$.  
Fraïssé's theorem characterizes the classes obtained in this way, and produces 
a method to recover the original ultra-homogeneous structures.  
Namely, if $\{\iota_{n, m} \colon \mathcal{A}_m \to \mathcal{A}_n\}$ and 
$\{\eta_{n, m} \colon \mathcal{B}_m \to \mathcal{B}_n\}$ are 
\emph{generic} inductive systems of 
members of such a class, then, passing to subsystems if necessary, 
one can find sequences $\{\varphi_n \colon \mathcal{A}_n \to \mathcal{B}_n\}$ 
and $\{\psi_n \colon \mathcal{B}_n \to \mathcal{A}_{n+1}\}$ of embeddings 
such that the triangles in the following diagram commute.  
\[
\begin{original}
\begin{tikzcd}
	\mathcal{A}_1 \arrow[r, "\iota_{2,1}"] \arrow[d, "\varphi_1"] & 
	\mathcal{A}_2 \arrow[r, "\iota_{3,2}"] \arrow[d, "\varphi_2"] & 
	\mathcal{A}_3 \arrow[r, "\iota_{4,3}"] \arrow[d, "\varphi_3"] & 
	\mathcal{A}_4 \arrow[r, "\iota_{5,4}"] \arrow[d, "\varphi_4"] & \dots \\
	\mathcal{B}_1 \arrow[r, "\eta_{2,1}"'] \arrow[ru, "\psi_1"'] & 
	\mathcal{B}_2 \arrow[r, "\eta_{3,2}"'] \arrow[ru, "\psi_2"'] & 
	\mathcal{B}_3 \arrow[r, "\eta_{4,3}"'] \arrow[ru, "\psi_3"'] & 
	\mathcal{B}_4 \arrow[r, "\eta_{5,4}"'] \arrow[ru, "\psi_4"'] & \dots 
\end{tikzcd}
\end{original}
\begin{arxiv}
\xymatrix{
	\mathcal{A}_1 \ar[r]^{\iota_{2,1}} \ar[d]^{\varphi_1} & 
	\mathcal{A}_2 \ar[r]^{\iota_{3,2}} \ar[d]^{\varphi_2} & 
	\mathcal{A}_3 \ar[r]^{\iota_{4,3}} \ar[d]^{\varphi_3} & 
	\mathcal{A}_4 \ar[r]^{\iota_{5,4}} \ar[d]^{\varphi_4} & \dots \\
	\mathcal{B}_1 \ar[r]_{\eta_{2,1}} \ar[ru]_{\psi_1} & 
	\mathcal{B}_2 \ar[r]_{\eta_{3,2}} \ar[ru]_{\psi_2} & 
	\mathcal{B}_3 \ar[r]_{\eta_{4,3}} \ar[ru]_{\psi_3} & 
	\mathcal{B}_4 \ar[r]_{\eta_{5,4}} \ar[ru]_{\psi_4} & \dots
}
\end{arxiv}
\]
Consequently, the sequences $\{\varphi_n\}$ and $\{\psi_n\}$ provide 
embeddings between the inductive limits 
which are inverses of each other and so are isomorphisms.  
In fact, the resulting inductive limits are isomorphic to 
the original ultra-homogeneous structure.  
The class and the ultra-homogeneous structure in this context are called 
a Fraïssé class and its Fraïssé limit respectively.  

Like other topics in model theory, this theory has been a target of generalization to 
the setting of metric structures (\cite{schoretsanitis07:_fraisse_theory}, 
\cite{yaacov15:_fraisse_limits}).  
In~\cite{eagle16:_fraisse_limits}, a variant of the theory 
presented in~\cite{yaacov15:_fraisse_limits} was used 
to recognize a part of AF algebras including 
the UHF algebras, the hyperfinite type $\mathrm{II}_1$ factor, and 
the Jiang--Su algebra as generic limits of suitable Fraïssé classes.  
The author also gave an alternative proof of the same result on the Jiang--Su algebra 
as \cite{eagle16:_fraisse_limits}, and realized the UHF algebras 
as Fraïssé limit in a different way~\cite{masumoto16:_jiang_su}.  

The idea in~\cite{eagle16:_fraisse_limits} and~\cite{masumoto16:_jiang_su} 
of realizing the Jiang--Su algebra as a Fraïssé limit is essentially coming from 
the observation that the approximate intertwining argument, 
which is used to construct an isomorphism in 
the classification result for inductive limits of direct sums of 
dimension drop algebras, is a variant of the back-and-forth argument.  
That is, in order to construct suitable sequences of embeddings between 
direct sums of dimension drop algebras, it would be suffice to 
verify that a suitable class of direct sums of dimension drop algebras forms 
a Fraïssé class, which does not necessarily mean that one needs KK-theory.  

Thus, in short, the result on the Jiang--Su algebra in~\cite{eagle16:_fraisse_limits} 
and~\cite{masumoto16:_jiang_su} seems to be a Fraïssé theoretic expression of 
the fact that the algebra is the unique simple monotracial inductive limits of 
prime dimension drop algebras.  Therefore, this fact should be recovered from 
the Fraïssé limit construction, and that is our strategy in this paper.  

This paper consists of three sections.  
In the next section we briefly give a description of Fraïssé theory for 
metric structures.  The Fraïssé limit construction of the Jiang--Su algebra is 
analyzed in the last section.


\section{Approximate isomorphisms and Fraïssé limits}

In this section, we briefly sketch the theory of Fraïssé limits 
used in~\cite{eagle16:_fraisse_limits} and~\cite{masumoto16:_jiang_su}.  
It is a slight modified version of the theory 
presented in~\cite{yaacov15:_fraisse_limits}, 
and detailed proofs can be found in~\cite{masumoto16:_generalized_fraisse}.  

By definition, a \emph{language} is a set $L$ 
such that each element is exactly one of the following symbols:  
\begin{itemize}
\item
	constant symbols; 
\item
	$n$-ary function symbols ($n = 1, 2, 3, \dots$); 
\item
	$n$-ary predicate symbols ($n = 1, 2, 3, \dots$).  
\end{itemize}
A \emph{metric $L$-structure} $\mathcal{A}$ consists of 
a complete metric space $A$, which is called the \emph{domain} of $\mathcal{A}$ 
and denoted by $|\mathcal{A}|$,  
together with an interpretation $S \mapsto S^\mathcal{A}$ of the symbols in $L$.  
\begin{itemize}
\item
	If $c$ is a constant symbol, then $c^\mathcal{A}$ is an element in $A$.  
\item
	If $f$ is an $n$-ary function symbol, then $f^\mathcal{A}$ is 
	a continuous function from $|\mathcal{A}|^n$ into $|\mathcal{A}|$.  
\item
	If $R$ is an $n$-ary predicate symbol, then $R^\mathcal{A}$ is 
	a continuous $\mathbb{R}$-valued function on $|\mathcal{A}|^n$.  
\end{itemize}
An isometry $\iota$ from a metric $L$-structure $\mathcal{A}$ into 
another $L$-structure $\mathcal{B}$ is called 
an \emph{($L$-)embedding} if 
\[
	f^\mathcal{B}\bigl(\iota(a_1), \dots, \iota(a_n)\bigr) 
	= \iota\bigl(f^\mathcal{A}(a_1, \dots, a_n)\bigr) 
	\quad (a_1, \dots, a_n \in |\mathcal{A}|)
\]
for any $n$-ary function symbol $f$ in $L$, and 
\[
	P^\mathcal{B}\bigl(\iota(a_1), \dots, \iota(a_n)\bigr) 
	= P^\mathcal{A}(a_1, \dots, a_n) 
	\quad (a_1, \dots, a_n \in |\mathcal{A}|)
\]
for any $n$-ary predicate symbol $P$ in $L$.  

In this paper, we are interested in the language $L_{\mathrm{TC}^*}$ of 
unital tracial C*-algebras, which consists of the following:  
\begin{itemize}
\item
	two constant symbols $0$ and $1$; 
\item
	an unary function symbol $\lambda$ for each $\lambda \in \mathbb{C}$, 
	which are to be interpreted as multiplication by $\lambda$; 
\item
	an unary function symbol $*$ for involution; 
\item
	a binary function symbol $+$ and ${}\cdot{}$; 
\item
	an unary predicate symbol $\operatorname{tr}$.  
\end{itemize}
Then every unital C*-algebra with a distinguished trace can be considered as 
a metric $L_{\mathrm{TC}^*}$-structure.  
Note that the distance we adopt is the norm distance, 
and that a map between unital C*-algebras with fixed traces are 
$L$-embeddings if and only if 
it is a trace-preserving injective $*$-homomorphism.  

\begin{dfn}\label{dfn:_jep_and_nap}
Let $\mathscr{K}$ be a category of finitely generated separable 
metric $L$-structures and $L$-embeddings.  
\begin{enumerate}
\item
	For $\mathcal{A}, \mathcal{B} \in \mathscr{K}$, we set 
	\[
		\operatorname{JE}_\mathscr{K}(\mathcal{A}, \mathcal{B}) 
		:= \bigl\{(\iota,\eta) \bigm| 
		\exists \mathcal{C} \in \operatorname{Obj}(\mathscr{K}), \ 
		\iota \in \operatorname{Mor}_\mathscr{K}(\mathcal{A}, \mathcal{C}) \ \& \ 
		\eta \in \operatorname{Mor}_\mathscr{K}(\mathcal{B}, \mathcal{C}) \bigr\} 
	\]
	and call each member of $\operatorname{JE}_\mathscr{K}(\mathcal{A},\mathcal{B})$ 
	a \emph{joint $\mathscr{K}$-embedding} of $\mathcal{A}$ and $\mathcal{B}$.  
	The category $\mathscr{K}$ is said to satisfy the \emph{joint embedding property (JEP)} 
	if $\operatorname{JE}_\mathscr{K}(\mathcal{A},\mathcal{B})$ is nonempty 
	for any objects $\mathcal{A}, \mathcal{B}$ of $\mathscr{K}$.  
\item
	The category $\mathscr{K}$ is said to satisfy the \emph{near amalgamation property} 
	if for any objects $\mathcal{A}, \mathcal{B}_1, \mathcal{B}_2$ of $\mathscr{K}$, 
	any morphisms $\iota_i \colon \mathcal{A} \to \mathcal{B}_i$, 
	any finite subset $G \subseteq \mathcal{A}$ and any $\varepsilon > 0$, 
	there exists a joint $\mathscr{K}$-embedding
	$(\eta_1, \eta_2) \in \operatorname{JE}_\mathscr{K}(\mathcal{B}_1,\mathcal{B}_2)$ 
	such that the inequality 
	\[
		d\bigl(\eta_1 \circ \iota_1(a), \eta_2 \circ \iota_2(a) \bigr) \leq \varepsilon
	\]
	holds for all $a \in G$.  
\end{enumerate}
\end{dfn}

In the sequel, we fix a category $\mathscr{K}$ of finitely generated separable 
metric $L$-structures and $L$-embeddings with JEP and NAP.  

\begin{dfn}
\begin{enumerate}
\item
	Let $\mathcal{A}, \mathcal{B}$ be objects of $\mathscr{K}$ and 
	$\varphi \colon |\mathcal{A}| \times |\mathcal{B}| \to [0, \infty]$ be a 
	bi-Kat\v{e}tov map, that is, a map satisfying
	\[
	\begin{aligned}
		\varphi(a, b) &\leq d(a, a') + \varphi(a', b), & 
		d(a, a') &\leq \varphi(a, b) + \varphi(a', b), \\
		\varphi(a, b) &\leq d(b, b') + \varphi(a, b'), &
		d(b, b') &\leq \varphi(a, b) + (a, b')
	\end{aligned}
	\]
	for all $a, a' \in |\mathcal{A}|$ and $b, b' \in |\mathcal{B}|$.  
	Then $\varphi$ is called an \emph{approximate $\mathscr{K}$-isomorphism} 
	from $\mathcal{A}$ to $\mathcal{B}$ 
	if for any finite subsets $A_0 \subseteq |\mathcal{A}|$ and 
	$B_0 \subseteq |\mathcal{B}|$ and any $\varepsilon > 0$, 
	there exists $(\iota, \eta) \in \operatorname{JE}_\mathscr{K}(\mathcal{A}, \mathcal{B})$ 
	such that the inequality 
	\[
		d\bigl(\iota(a), \eta(b)\bigr) \leq \varphi(a, b) + \varepsilon
	\]
	holds for all $a \in A_0$ and $b \in B_0$.  
	We denote by $\operatorname{Apx}_\mathscr{K}(\mathcal{A},\mathcal{B})$ 
	the set of all approximate $\mathscr{K}$-isomorphisms 
	from $\mathcal{A}$ to $\mathcal{B}$.  
\item
	A \emph{$\mathscr{K}$-structure} is a metric $L$-structure $\mathcal{M}$ 
	together with a distinguished inductive system 
	\[
	\begin{original}
	\begin{tikzcd}
		\mathcal{A}_1 \arrow[r,"\iota_1"] & \mathcal{A}_2 \arrow[r,"\iota_2"] &
		\mathcal{A}_3 \arrow[r,"\iota_3"] & \cdots
	\end{tikzcd}
	\end{original}
	\begin{arxiv}
	\xymatrix{
		\mathcal{A}_1 \ar[r]^{\iota_1} & \mathcal{A}_2 \ar[r]^{\iota_2} &
		\mathcal{A}_3 \ar[r]^{\iota_3} & \cdots
	}
	\end{arxiv}
	\]
	in $\mathscr{K}$ such that the inductive limit of the system as 
	a metric $L$-structure is $\mathcal{M}$.  
\item
	By definition, an approximate $\mathscr{K}$-isomorphism from a $\mathscr{K}$-structure 
	$\mathcal{M} = \overline{\bigcup_n \mathcal{A}_n}$ to 
	another $\mathscr{K}$-structure $\mathcal{N} = \overline{\bigcup_m \mathcal{B}_m}$ is 
	a bi-Kat\v{e}tov map $\varphi \colon |\mathcal{M}| \times |\mathcal{N}| \to [0,\infty]$ 
	such that the restriction 
	$\varphi|_{\mathcal{A}_n \times \mathcal{B}_m}$ is in 
	$\operatorname{Apx}_\mathscr{K}(\mathcal{A}_n, \mathcal{B}_m)$ for all $n$ and $m$.  
	We denote by $\operatorname{Apx}_\mathscr{K}(\mathcal{M}, \mathcal{N})$ 
	the set of all approximate $\mathscr{K}$-isomorphisms 
	from $\mathcal{M}$ to $\mathcal{N}$.  
\end{enumerate}
\end{dfn}

The following two examples of approximate $\mathscr{K}$-isomorphisms are 
of most importance.  

\begin{expl}
\begin{enumerate}
\item
	For $L$-embeddings $\iota \colon \mathcal{A} \to \mathcal{C}$ and 
	$\eta \colon \mathcal{B} \to \mathcal{C}$, we set 
	\[
	\begin{aligned}
		\varphi_{\iota,\eta}(a, b) &:= 
		d\bigl(\iota(a), \eta(b)\bigr) & 
		(a \in |\mathcal{A}|, \ b \in |\mathcal{B}|).  
	\end{aligned}
	\]
	If both $\iota$ and $\eta$ are morphisms of $\mathscr{K}$, 
	then $\varphi_{\iota,\eta}$ is an approximate $\mathscr{K}$-isomorphism 
	between objects of $\mathscr{K}$.  
	It is simply written as $\varphi_\iota$ when 
	$\mathcal{C}$ is equal to $\mathcal{B}$ and $\eta$ is the identity map.  
	
	Even if $\iota$ and $\eta$ are not morphisms of $\mathscr{K}$, 
	the bi-Kat\v{e}tov map $\varphi_{\iota,\eta}$ can be 
	an approximate $\mathscr{K}$-isomorphism.  
	An $L$-embedding $\iota$ between $\mathscr{K}$-structures is 
	said to be \emph{$\mathscr{K}$-admissible} if 
	$\varphi_\iota$ is an approximate $\mathscr{K}$-isomorphism.  
\item
	Let $\mathcal{M}_1 = \overline{\bigcup_l \mathcal{A}_l}$, 
	$\mathcal{M}_2 = \overline{\bigcup_m \mathcal{B}_m}$ and 
	$\mathcal{M}_3 = \overline{\bigcup_n \mathcal{C}_n}$ be 
	$\mathscr{K}$-structures.  
	If $\varphi$ is an approximate $\mathscr{K}$-isomorphism from 
	$\mathcal{M}_1$ to $\mathcal{M}_2$ and 
	$\psi$ is an approximate $\mathscr{K}$-isomorphism from 
	$\mathcal{M}_2$ to $\mathcal{M}_3$, 
	then 
	\[
	\begin{aligned}
	\psi\varphi(a, c) &:= \inf_{b \in |\mathcal{M}_2|} 
	\bigl[\varphi(a, b) + \psi(b, c)\bigr] & 
	(a \in |\mathcal{M}_1|, \ c \in |\mathcal{M}_2|) 
	\end{aligned}
	\]
	is an approximate $\mathscr{K}$-isomorphism from 
	$\mathcal{M}_1$ to $\mathcal{M}_3$.  
	For a proof, see~\cite[Proposition~3.4]{masumoto16:_generalized_fraisse}.  
\end{enumerate}
\end{expl}

Now, for each $n \in \mathbb{N}$, set
\[
	\mathscr{K}_n 
	:= \bigl\{\langle \mathcal{A}, \bar{a} \rangle \bigm| 
	\mathcal{A} \in \operatorname{Obj}(\mathscr{K}) \ \& \ 
	\text{$\bar{a} \in |\mathcal{A}|^n$ is an ordered generator of $\mathcal{A}$} \bigr\} 
\]
and define a function $d^\mathscr{K}$ on $\mathscr{K}_n$ by 
\[
\begin{aligned}
	d\bigl(\langle \mathcal{A},\bar{a} \rangle, \langle \mathcal{B},\bar{b} \rangle\bigr) 
	:=& \inf \{ \max_i d\bigl(\iota(a_i),\eta(b_i)\bigr) \mid 
	{(\iota,\eta) \in \operatorname{JE}_\mathscr{K}(\mathcal{A},\mathcal{B})} \} \\
	=& \inf \{ \max_i \varphi(a_i,b_i) \mid 
	{\varphi \in \operatorname{Apx}_\mathscr{K}(\mathcal{A},\mathcal{B})} \}, 
\end{aligned}
\]
where $\bar{a} = (a_1, \dots, a_n)$ and $\bar{b} = (b_1, \dots b_n)$.  
Then it follows from JEP and NAP that $d^\mathscr{K}$ is a pseudo-metric on $\mathscr{K}_n$.  

\begin{dfn}
\begin{enumerate}
\item
	The category $\mathscr{K}$ is said to satisfy the \emph{weak Polish property (WPP)} 
	if $\mathscr{K}_n$ is separable with respect to the pseudo-metric $d^\mathscr{K}$ 
	for all $n \in \mathbb{N}$.  
\item
	The class $\mathscr{K}$ satisfies the \emph{Cauchy continuity property (CCP)} if 
	\begin{enumerate}[label=\alph*)]
	\item
		for any $n$-ary predicate symbol $P$ in $L$, 
		the map $\bigl\langle\mathcal{A},(\bar{a},\bar{b})\bigr\rangle 
		\mapsto P^\mathcal{A}(\bar{a})$ 
		from $\mathscr{K}_{n+m}$ into $\mathbb{R}$ sends 
		Cauchy sequences to Cauchy sequences; and
	\item
		for any $n$-ary function symbol $f$ in $L$, 
		the map $\bigl\langle\mathcal{A},(\bar{a},\bar{b})\bigr\rangle \mapsto 
		\bigl\langle\mathcal{A},(\bar{a},\bar{b},f^\mathcal{A}(\bar{a})\bigr\rangle$ 
		from $\mathscr{K}_{n+m}$ into $\mathscr{K}_{n+m+1}$ sends 
		Cauchy sequences to Cauchy sequences.  
	\end{enumerate}
\end{enumerate}
\end{dfn}

\begin{dfn}
A category $\mathscr{K}$ of finitely generated separable 
metric $L$-structures and $L$-embeddings is called 
a \emph{Fraïssé category} if it satisfies JEP, NAP, WPP and CCP.  
A \emph{limit} of a Fraïssé category $\mathscr{K}$ is 
a $\mathscr{K}$-structure $\mathcal{M}$ with the following properties. 
\begin{enumerate}
\item
	\emph{$\mathcal{M}$ is $\mathscr{K}$-universal}: 
	Every member of $\mathscr{K}$ is $\mathscr{K}$-admissibly embeddable into $\mathcal{M}$.  
\item
	\emph{$\mathcal{M}$ is approximately $\mathscr{K}$-homogeneous}: 
	For any member $\mathcal{A}$ of $\mathscr{K}$, 
	any $\mathscr{K}$-admissible $L$-embeddings 
	$\iota, \eta \colon \mathcal{A} \to \mathcal{M}$, any finite subset 
	$A_0 \subseteq |\mathcal{A}|$ and any $\varepsilon > 0$, 
	there exists a $\mathscr{K}$-admissible $L$-automorphism 
	$\alpha$ of $\mathcal{M}$ such that 
	$d\bigl(\alpha \circ \iota(a), \eta(a)\bigr) < \varepsilon$ holds for all $a \in A_0$.  
\end{enumerate}
\end{dfn}

\begin{thm}
Every Fraïssé category admits a limit.  
Moreover, its limit is unique up to $\mathscr{K}$-admissible isomorphisms.  
\end{thm}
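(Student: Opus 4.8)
The plan is to prove existence by constructing a single \emph{generic} inductive system in $\mathscr{K}$ whose limit satisfies both defining properties, and to prove uniqueness by an approximate back-and-forth (intertwining) argument between two such limits.

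For existence, I would first invoke WPP to fix, for each $n$, a countable $d^\mathscr{K}$-dense subset $D_n \subseteq \mathscr{K}_n$; this renders the collection of ``tasks'' to be discharged countable, so that a diagonal bookkeeping becomes possible. I then build a chain $\iota_n \colon \mathcal{A}_n \to \mathcal{A}_{n+1}$ recursively, interleaving two kinds of steps so that each is performed cofinally often. A \emph{cofinality step} uses JEP: given the current term $\mathcal{A}_n$ and a target $\langle \mathcal{B}, \bar b\rangle$ drawn from the $D_k$, a joint embedding $(\iota,\eta) \in \operatorname{JE}_\mathscr{K}(\mathcal{A}_n, \mathcal{B})$ into some $\mathcal{C}$ lets me set $\mathcal{A}_{n+1} := \mathcal{C}$, embedding $\mathcal{B}$ into a later term. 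An \emph{absorption step} uses NAP: for a queued morphism $f \colon \mathcal{A}_n \to \mathcal{Y}$ into a reservoir object $\mathcal{Y}$, together with a finite set and an $\varepsilon$, I apply NAP to the span $\mathcal{Y} \xleftarrow{f} \mathcal{A}_n \xrightarrow{\mathrm{id}} \mathcal{A}_n$ to obtain $(\eta_1,\eta_2) \in \operatorname{JE}_\mathscr{K}(\mathcal{Y}, \mathcal{A}_n)$ into a common $\mathcal{C}$ with $\eta_1 \circ f$ and $\eta_2$ within $\varepsilon$ on the finite set; taking $\mathcal{A}_{n+1} := \mathcal{C}$ and $\iota_n := \eta_2$ makes $f$ approximately reversible inside the chain. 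Letting $\mathcal{M}$ be the inductive limit as a metric $L$-structure, I would verify $\mathscr{K}$-universality from the cofinality steps (a $d^\mathscr{K}$-dense family of objects embeds admissibly, and arbitrary objects follow by approximation) and approximate $\mathscr{K}$-homogeneity from the absorption steps via a back-and-forth, in each case using CCP to guarantee that the approximate, finitely-supported data assemble into a genuine $L$-embedding on the completion.

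For uniqueness, let $\mathcal{M} = \overline{\bigcup_n \mathcal{A}_n}$ and $\mathcal{N} = \overline{\bigcup_m \mathcal{B}_m}$ both be limits. I would run the approximate intertwining exactly as in the diagram of the introduction: using universality of $\mathcal{N}$ embed $\mathcal{A}_1$ admissibly into $\mathcal{N}$, absorb its image into some $\mathcal{B}_{k_1}$, then use universality and homogeneity of $\mathcal{M}$ to embed $\mathcal{B}_{k_1}$ into a later $\mathcal{A}_{n_2}$ so that the resulting triangle commutes up to $\varepsilon_1$ on a prescribed finite set, and continue, halving the error each stage. The homogeneity of the two limits is precisely what permits each successive triangle to be corrected. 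Passing to the limit, the intertwined sequences $\{\varphi_n\}$ and $\{\psi_n\}$ define $\mathscr{K}$-admissible $L$-embeddings $\mathcal{M} \to \mathcal{N}$ and $\mathcal{N} \to \mathcal{M}$ that are mutually inverse, hence a $\mathscr{K}$-admissible isomorphism.

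The hard part will be the passage to honest limits in the metric setting. At every stage the maps are specified only approximately and on finite subsets, so to conclude that the limiting correspondence is a bona fide isometric $L$-embedding — in particular that it preserves each predicate symbol such as $\operatorname{tr}$ — one must control the accumulated errors carefully; this is exactly the role of CCP, which forces the interpretations to vary Cauchy-continuously along the $d^\mathscr{K}$-Cauchy sequences produced by the construction, while the summability of the $\varepsilon_n$ guarantees convergence of the composites. Alongside this, organizing the bookkeeping so that every cofinality and every absorption task (ranging over the dense sets $D_n$ and over $\varepsilon \to 0$) is met cofinally is the combinatorial crux, and it is WPP that reduces this to a countable scheduling problem.
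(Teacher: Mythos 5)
The paper never proves this theorem: it is stated as imported background, with the proof deferred to the companion paper \cite{masumoto16:_generalized_fraisse} (itself a variant of the construction in \cite{yaacov15:_fraisse_limits}), so there is no in-paper argument to compare against line by line. Measured against those cited proofs, your outline is essentially the same, standard argument: a generic chain built by alternating JEP (cofinality) steps and NAP (absorption/extension) steps, with WPP reducing the tasks to a countable schedule and CCP guaranteeing that the approximately specified, finitely supported data converge to genuine $L$-embeddings of the completions, followed by uniqueness via approximate intertwining; the roles you assign to the four axioms are the correct ones. One point your sketch leaves implicit, and which is the main technical burden in the cited proof, is $\mathscr{K}$-admissibility: both clauses in the definition of a limit demand not merely $L$-embeddings and $L$-automorphisms but ones whose associated bi-Kat\v{e}tov maps $\varphi_\iota$ are approximate $\mathscr{K}$-isomorphisms, so after passing to the limit of your intertwining you must verify admissibility of the limiting maps, not just that they are isometric embeddings preserving the structure. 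Relatedly, in the uniqueness step the image of $\mathcal{A}_1$ in $\mathcal{N}$ need not lie in any $\mathcal{B}_{k_1}$ exactly, only approximately generator-by-generator; the clean way to handle both issues, which is how \cite{masumoto16:_generalized_fraisse} proceeds (compare Theorem~\ref{thm:_recognition_of_the_limit}), is to run the entire back-and-forth at the level of approximate $\mathscr{K}$-isomorphisms (composites $\psi\varphi$, trivial extensions, strictness and $\varepsilon$-totality) rather than at the level of honest maps, extracting the admissible isomorphism only at the very end. Your proposal is correct in outline but would need to be recast in that form, or supplemented with an explicit admissibility argument, to be complete.
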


We conclude this section by characterizing the limit of a Fraïssé category 
in terms of approximate isomorphisms.  
For this, we need to introduce the concepts of strictness and totality.  

Let $A \subseteq A'$ and $B \subseteq B'$ be 
subspaces of metric spaces 
and $\varphi \colon A \times B \to [0, \infty]$ be a bi-Kat\v{e}tov map.  
Then the \emph{trivial extension} $\varphi|^{A' \times B'}$ of 
$\varphi$ is 
the bi-Kat\v{e}tov map on $A' \times B'$ defined by
\[
\begin{aligned}
	\varphi|^{A' \times B'}(a',b') &:= 
	\inf_{\substack{a \in A \\ b \in B}} 
	\bigl[d(a',a)+\varphi(a,b)+d(b,b')\bigr] & 
	(a' \in A', \ b' \in B').  
\end{aligned}
\]
Note that if $\psi$ is another bi-Kat\v{e}tov map on $A' \times B'$, 
then $\psi \leq \varphi|^{A' \times B'}$ is equivalent to 
$\psi|_{A \times B} \leq \varphi$.  

\begin{dfn}
Let $\varphi$ be an approximate $\mathscr{K}$-isomorphism 
from $\mathcal{A}$ to $\mathcal{B}$.  
\begin{enumerate}
\item
	The approximate $\mathscr{K}$-isomorphism $\varphi$ is
	said to be \emph{strict} if there exist another approximate isomorphism 
	$\psi$ from $\mathcal{A}$ to $\mathcal{B}$, a positive real number $\varepsilon > 0$ 
	and finite subsets $A_0 \subseteq |\mathcal{A}|$ and $B_0 \subseteq |\mathcal{B}|$ 
	such that the inequality 
	\[
		(\psi|_{A_0 \times B_0})|^{\mathcal{A} \times \mathcal{B}}(a, b) + \varepsilon 
		\leq \varphi(a, b)
	\]
	holds for all $a \in |\mathcal{A}|$ and $b \in |\mathcal{B}|$.  
\item
	The approximate $\mathscr{K}$-isomorphism $\varphi$ is  
	\emph{$\varepsilon$-total} on a subset $A_0 \subseteq |\mathcal{A}|$ if the inequality 
	\[
		\varphi^*\varphi(a, a') \leq d(a, a') + 2\varepsilon, 
	\]
	holds for all $a, a' \in A_0$, or equivalently, 
	\[
		\inf_{b \in \mathcal{B}} \varphi(a, b) \leq \varepsilon
	\]
	for all $a \in A_0$.  
\end{enumerate}
\end{dfn}

The following theorem is a weaker version 
of~\cite[Lemma~4.6]{masumoto16:_generalized_fraisse}.  

\begin{thm}\label{thm:_recognition_of_the_limit}
Let $\mathcal{M} = \overline{\bigcup_n \mathcal{A}_n}$ be a $\mathscr{K}$-structure.  
Then $\mathcal{M}$ is the Fraïssé limit of $\mathscr{K}$ if and only if 
for any object $\mathcal{B}$ of $\mathscr{K}$, 
any strict approximate $\mathscr{K}$-isomorphism $\varphi$ 
from $\mathcal{B}$ to $\mathcal{A}_n$, any finite subset $B_0 \subseteq |\mathcal{B}|$ 
and any $\varepsilon > 0$, there exist $N > n$ and 
an approximate $\mathscr{K}$-isomorphism $\psi$ 
from $\mathcal{B}$ into $\mathcal{A}_N$ 
which is $\varepsilon$-total on $B_0$ and 
dominated by $\varphi|^{\mathcal{B} \times \mathcal{A}_N}$.  
\end{thm}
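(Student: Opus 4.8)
\emph{Overview of the strategy.} The statement is the approximate analogue of the classical equivalence between being a Fraïssé limit and satisfying a one-step extension property, so the plan is to prove the two implications separately, in each case running an approximate back-and-forth controlled by the bi-Kat\v{e}tov calculus (composition of approximate isomorphisms, trivial extension, totality) set up above, and to use the existence-and-uniqueness theorem for limits freely. Two quantitative devices carry the argument: \emph{totality}, which upgrades a sufficiently total approximate isomorphism into an honest $\mathscr{K}$-admissible embedding into the limit; and \emph{strictness}, whose built-in slack lets me convert ``approximately dominated'' into ``genuinely dominated by a trivial extension'', using that trivial extension is monotone and that $\psi\le\varphi|^{A'\times B'}$ is equivalent to $\psi|_{A\times B}\le\varphi$.

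\emph{Necessity.} Suppose $\mathcal{M}$ is the limit, hence $\mathscr{K}$-universal and approximately $\mathscr{K}$-homogeneous. Given a strict $\varphi\in\operatorname{Apx}_\mathscr{K}(\mathcal{B},\mathcal{A}_n)$, I would extract from strictness an approximate isomorphism $\psi'$, a slack $\varepsilon'>0$ and finite sets $B_1,A_1$ with $(\psi'|_{B_1\times A_1})|^{\mathcal{B}\times\mathcal{A}_n}+\varepsilon'\le\varphi$. Realize $\psi'$ on the relevant finite sets by an honest joint embedding $(\iota,\eta)$ of $\mathcal{B}$ and $\mathcal{A}_n$ into some $\mathcal{C}\in\mathscr{K}$, up to an error smaller than $\varepsilon'$; embed $\mathcal{C}$ admissibly into $\mathcal{M}$ by universality; and align $\eta$ with the canonical embedding $\iota_n^\infty\colon\mathcal{A}_n\to\mathcal{M}$ by an automorphism $\alpha$ of $\mathcal{M}$, using homogeneity. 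The composite $\lambda\colon\mathcal{B}\to\mathcal{M}$ is then a $\mathscr{K}$-admissible embedding whose distances to $\iota_n^\infty(\mathcal{A}_n)$ reproduce $\psi'$ within the allotted error. Since $\bigcup_N\mathcal{A}_N$ is dense in $\mathcal{M}$, project $\lambda(B_0)$ into some $\mathcal{A}_N$ within $\varepsilon$ and let $\psi$ be the trivial extension of the resulting finite table of distances. Then $\psi$ is $\varepsilon$-total on $B_0$ by construction, and on the finite sets the table is bounded by $\psi'+\varepsilon'\le\varphi$, so monotonicity of trivial extension gives $\psi\le\varphi|^{\mathcal{B}\times\mathcal{A}_N}$.

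\emph{Sufficiency.} Assume the extension property; I would deduce that $\mathcal{M}$ is $\mathscr{K}$-universal and approximately $\mathscr{K}$-homogeneous, whence it is the limit. For universality, given $\mathcal{B}\in\mathscr{K}$, produce a joint embedding of $\mathcal{B}$ and $\mathcal{A}_1$ from JEP and perturb it to a strict $\varphi_1\in\operatorname{Apx}_\mathscr{K}(\mathcal{B},\mathcal{A}_1)$; then apply the extension property repeatedly along an increasing exhaustion $B_0\subseteq B_1\subseteq\cdots$ of a countable dense subset of $|\mathcal{B}|$ with $\varepsilon_k\downarrow 0$, obtaining $\psi_k\in\operatorname{Apx}_\mathscr{K}(\mathcal{B},\mathcal{A}_{N_k})$ that are $\varepsilon_k$-total on $B_k$ and dominated by the trivial extension of their predecessors. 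Domination makes the sequence coherent and convergent, while increasing totality forces the limit to be an honest $\mathscr{K}$-admissible embedding $\mathcal{B}\to\mathcal{M}$. For homogeneity, approximate two given admissible embeddings $\iota,\eta\colon\mathcal{A}\to\mathcal{M}$ by maps into finite stages and run the same construction alternately in both directions, producing a coherent zig-zag whose limit is a $\mathscr{K}$-admissible automorphism $\alpha$ with $\alpha\circ\iota\approx\eta$ on $A_0$; alternatively one may back-and-forth $\mathcal{M}$ directly against the already-constructed limit.

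\emph{Main obstacle.} The hard part is the quantitative bookkeeping that makes strictness do its work: at each stage one must preserve \emph{exact} domination by the relevant trivial extension while simultaneously gaining totality, and one must check that the freshly produced approximate isomorphism $\psi_k$ is again \emph{strict}, so that the extension property can be reapplied. Arranging the $\varepsilon_k$ to telescope, so that the coherent limit exists and is a genuine embedding rather than a merely approximate map, and verifying that strictness is inherited along the construction, are the delicate points; realizing an approximate isomorphism by an honest joint embedding, together with the monotonicity and adjointness of trivial extension, are the routine ingredients feeding into them.
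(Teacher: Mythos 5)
A preliminary remark: the paper itself contains no proof of this theorem --- it is imported verbatim as ``a weaker version of \cite[Lemma~4.6]{masumoto16:_generalized_fraisse}'' --- so there is no in-paper argument to compare yours against; what follows judges your proposal on its own merits. Your necessity direction is sound in outline: realizing the strict datum $\psi'$ by an honest joint embedding on the finite sets $B_1 \times A_1$ up to an error absorbed by the strictness slack $\varepsilon'$, pushing into $\mathcal{M}$ by universality, aligning with the canonical embedding $\iota_n^\infty$ by homogeneity, and then taking the trivial extension of the finite distance table of the resulting admissible embedding $\lambda$ against a stage $\mathcal{A}_N$ that captures $\lambda(B_0)$ within $\varepsilon$ does produce the required $\psi$; domination follows from the adjointness property (that $\psi \leq \varphi|^{\mathcal{B}\times\mathcal{A}_N}$ is equivalent to $\psi|_{\mathcal{B}\times\mathcal{A}_n} \leq \varphi$) together with the slack $\varepsilon'$ swallowing the realization and homogeneity errors.

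The sufficiency direction, however, has a genuine gap, and it sits exactly at the point you flag but do not resolve. Your iteration requires the $\psi_k$ produced by the extension property to be ``dominated by the trivial extension of their predecessors'' (exact coherence) and asks one to ``verify that strictness is inherited along the construction''. Neither can hold as stated. First, strictness forces a uniform positive lower bound: if $(\psi'|_{B_1\times A_1})|^{\mathcal{B}\times\mathcal{A}_n} + \varepsilon' \leq \varphi$, then $\varphi \geq \varepsilon'$ everywhere, since trivial extensions are nonnegative. On the other hand, $\varepsilon_k$-totality of $\psi_k$ on $B_k$ means $\inf_a \psi_k(b,a) \leq \varepsilon_k$ for $b \in B_k$, so any strictness gap of $\psi_k$ is at most $\varepsilon_k$; and the natural outputs of the extension property --- maps of embedding type $\varphi_\iota$, which vanish on the graph of $\iota$ --- are not strict at all. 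So strictness is simply not inherited, and one is forced to re-strictify at each step, replacing $\psi_k$ by $(\psi_k|_{F_k\times G_k})|^{\mathcal{B}\times\mathcal{A}_{N_k}} + \delta_{k+1}$ (with $G_k$ containing the totality witnesses chosen so far). But this map strictly dominates $\psi_k$, so exact domination of $\psi_{k+1}$ by the trivial extension of $\psi_k$ is irrecoverably lost: coherence holds only up to the slacks $\delta_k$, and the Cauchy argument for the witnesses $a_k \in \mathcal{A}_{N_k}$, the isometry of the limit map, its preservation of the $L$-structure (where CCP enters), and its $\mathscr{K}$-admissibility must all be rerun with summable errors $\sum_k (\varepsilon_k + \delta_k) < \infty$. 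This bookkeeping is not a deferrable verification; it is the actual content of the lemma, and your inductive step, as literally written, fails without it.
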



\section{The Jiang--Su algebra}

In this paper, we shall denote by $\mathbb{M}_n$ the C*-algebra of 
all $n$-by-$n$ complex matrices.  
For natural numbers $p$ and $q$, 
the \emph{dimension drop algebra} $\mathcal{Z}_{p,q}$ is defined as 
the C*-algebra of all $\mathbb{M}_p \otimes \mathbb{M}_q$-valued 
continuous functions $f$ on the closed interval $[0,1]$ such that 
$f(0)$ and $f(1)$ are contained in 
$\mathbb{M}_p \otimes 1_q$ and $1_p \otimes \mathbb{M}_q$ respectively.  
It is said to be \emph{prime} if $p$ and $q$ are coprime.  
Note that if $(e_{ij})_{i,j}$ and $(f_{kl})_{k,l}$ are systems of matrix units of 
$\mathbb{M}_p$ and $\mathbb{M}_q$ respectively, 
then $(e_{ij} \otimes f_{kl})_{(i,k),(j,l)}$ is a system of matrix units, 
so $\mathbb{M}_p \otimes \mathbb{M}_q$ is isomorphic to $\mathbb{M}_{pq}$.  

We denote by $c_p^{p,q}$ the map from $\mathbb{M}_p \otimes 1_q$ to $\mathbb{M}_p$ 
defined by $a \otimes 1 \mapsto a$.  
The map $c_q^{p,q} \colon 1_p \otimes \mathbb{M}_q \to \mathbb{M}_q$ is defined similarly.   
When no confusion arises, these maps are simply denoted by $c$.  
Also, for $t \in [0,1]$, we denote by $e_t$ the evaluation map at $t$.  

The following proposition is a trivial modification of 
\cite[Proposition~3.5]{masumoto16:_jiang_su}.  

\begin{prop}\label{prop:_approximate_diagonalizability}
Let $\iota \colon \mathcal{Z}_{p,q} \to \mathcal{Z}_{p',q'}$ be a unital $*$-homomorphism.  
Then the following statements hold.  
\begin{enumerate}
\item \label{prop:_approximate_diagonalizability:itemi}
	There exist integers $a, b$ with $0 \leq a < q$ and $0 \leq b < p$, 
	continuous maps $t_1, \dots, t_k$ from $[0,1]$ into $[0,1]$ and 
	a family $\{v_s\}_{s \in [0,1]}$ of unitary matrices of size $p'q'$ such that 
	$\iota$ is of the form 
	\[
	\begin{aligned}
		\iota\bigl(f\bigr)(s) 
		= \operatorname{Ad}(v_s) \Bigl( \operatorname{diag}
		\bigl[\overbrace{c(f(0)), \dots, c(f(0))}^a, & \\
		f(t_1(s)), \dots, f(t_k(s)), &\underbrace{c(f(1)), \dots, c(f(1))}_b \bigr] \Bigr)
	\end{aligned}
	\]
	for $f \in \mathcal{Z}_{p,q}$ and $s \in [0,1]$, 
	where $\operatorname{Ad}(v)$ denotes the inner automorphism associated to $v$, 
	and $\operatorname{diag}[a_1,\dots,a_n]$ is the block diagonal matrix with $a_i$ as 
	its $i$-th block.  
\item
	Suppose that $t_1, \dots, t_k$ are as in~\ref{prop:_approximate_diagonalizability:itemi}.  
	Then for any finite $G \subseteq \mathcal{Z}_{p,q}$ and any $\varepsilon > 0$, 
	there exists a continuous path $u \colon [0,1] \to \mathbb{M}_{p'q'}$ of unitaries 
	such that the $*$-homomorphism 
	$\iota' \colon \mathcal{Z}_{p,q} \to \mathcal{Z}_{p',q'}$ defined by 
	\[
	\begin{aligned}
		\iota'\bigl(f\bigr)(s) 
		= \operatorname{Ad}(u(s)) \Bigl( \operatorname{diag}
		\bigl[\overbrace{c(f(0)), \dots, c(f(0))}^a, & \\
		f(t_1(s)), \dots, f(t_k(s)), &\underbrace{c(f(1)), \dots, c(f(1))}_b \bigr] \Bigr) 
	\end{aligned}
	\]
	satisfies $\|\iota(g) - \iota'(g)\| < \varepsilon$ for all $g \in G$.  
\end{enumerate}
\end{prop}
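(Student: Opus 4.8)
The plan is to prove the two assertions by first reducing $\iota$ to its fibrewise behaviour and then organising that fibrewise data into continuous families. For part~\ref{prop:_approximate_diagonalizability:itemi} I would begin by composing $\iota$ with evaluations: for each $s \in [0,1]$ the map $\pi_s := e_s \circ \iota$ is a unital $*$-representation of $\mathcal{Z}_{p,q}$ on $\mathbb{C}^{p'q'}$. The irreducible representations of $\mathcal{Z}_{p,q}$ are easy to list: the point evaluations $e_t$ for $t \in (0,1)$ are irreducible of dimension $pq$, while the boundary conditions force $e_0$ and $e_1$ to factor through $c_p^{p,q}$ and $c_q^{p,q}$, giving irreducibles of dimensions $p$ and $q$. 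Hence each $\pi_s$ is, after conjugation by a unitary $v_s \in \mathbb{M}_{p'q'}$, a block-diagonal sum of such point evaluations. To extract the evaluation points as functions of $s$ I would use the central coordinate $\zeta(t) := t \cdot 1_{pq}$, which lies in $\mathcal{Z}_{p,q}$; its image $\iota(\zeta)$ is self-adjoint, and its eigenvalues listed with multiplicity are exactly $0$ (from the left-endpoint blocks, multiplicity $ap$), $1$ (from the right-endpoint blocks, multiplicity $bq$), and the interior evaluation points $t_j(s)$ (multiplicity $pq$ each). Since the eigenvalues of a norm-continuous path of Hermitian matrices can be enumerated continuously, this yields the continuous maps $t_1, \dots, t_k \colon [0,1] \to [0,1]$.

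It then remains to fix the block structure and normalise the endpoint multiplicities. The multiplicity profile is constrained by the dimension count $ap + bq + kpq = p'q'$ and is locally constant in $s$, hence constant by connectedness of $[0,1]$. The bounds $0 \le a < q$ and $0 \le b < p$ come from absorbing endpoint redundancy into interior blocks: because $f(0) = c_p^{p,q}(f(0)) \otimes 1_q$, any group of $q$ left-endpoint blocks is exactly one interior block sitting at $t = 0$, so one may repeatedly trade $q$ such blocks for an interior point at $0$ until $a < q$, and symmetrically trade $p$ right-endpoint blocks for an interior point at $1$ until $b < p$. The pointwise diagonalising unitaries $v_s$ then put $\iota$ in the asserted form, with the caveat that $s \mapsto v_s$ need not be continuous.

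For the second part the eigenvalue functions $t_1, \dots, t_k$ and the multiplicities $a, b$ are held fixed, so the block-diagonal homomorphism $D \colon \mathcal{Z}_{p,q} \to C([0,1], \mathbb{M}_{p'q'})$ with $D(f)(s) := \operatorname{diag}[\dots]$ is norm-continuous in $s$; although $\iota$ is continuous, its factorisation $\iota(f)(s) = \operatorname{Ad}(v_s)(D(f)(s))$ carries all of its discontinuity in the family $\{v_s\}$. The task is to replace $\{v_s\}$ by a genuinely continuous unitary path $u$, so that $\iota'(f)(s) := \operatorname{Ad}(u(s))(D(f)(s))$ still lands in $\mathcal{Z}_{p',q'}$ and approximates $\iota$ to within $\varepsilon$ on $G$. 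Away from the finitely many parameters where the $t_j$ collide or meet $\{0,1\}$, the spectral projections of $\iota(\zeta)(s)$ vary norm-continuously, so over each regular subinterval a continuous diagonalising unitary exists; I would build $u$ by patching these local choices, imposing the endpoint requirements by taking $u(0) = v_0$ and $u(1) = v_1$, which already conjugate $D(f)(0)$ and $D(f)(1)$ into $\mathbb{M}_{p'} \otimes 1_{q'}$ and $1_{p'} \otimes \mathbb{M}_{q'}$. Choosing the mesh small relative to the modulus of continuity of $s \mapsto D(g)(s)$ for $g \in G$ then delivers $\|\iota(g) - \iota'(g)\| < \varepsilon$.

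The main obstacle is precisely the continuity of $u$ at the singular parameters where two eigenvalue functions coincide or where some $t_j(s)$ reaches $0$ or $1$. There the commutant of $D(f)(s)$ — the ambiguity in the diagonalising unitary — jumps in dimension, so a continuous choice need not exist through these points, and the fixed endpoint conditions further restrict the admissible lifts. The device that resolves this is the $\varepsilon$-slack: near a collision one groups the nearly coincident eigenvalues into a single block of the combined size and realises the transition as one continuous rotation inside the (connected) unitary group of that enlarged block, incurring an error controlled by the spectral variation over a small neighbourhood and hence kept below $\varepsilon$. Verifying that this local smoothing is compatible with the prescribed values $u(0) = v_0$, $u(1) = v_1$, and that the resulting $\iota'$ is a bona fide $*$-homomorphism into $\mathcal{Z}_{p',q'}$, is the technical heart of the argument.
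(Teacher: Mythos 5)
A preliminary remark on the comparison: this paper does not actually prove Proposition~\ref{prop:_approximate_diagonalizability}; it is quoted as a trivial modification of \cite[Proposition~3.5]{masumoto16:_jiang_su}, so your attempt can only be measured against the standard argument. For part~(1) your route (fibrewise decomposition of $e_s \circ \iota$ into the irreducibles $c \circ e_0$, $c \circ e_1$ and $e_t$ for $t \in (0,1)$, then continuous enumeration of the eigenvalues of $\iota(\zeta)$ for the central element $\zeta(t) = t\cdot 1_{pq}$) is indeed the standard one and is essentially sound. One correction: the claim that ``the multiplicity profile \dots is locally constant in $s$'' is false as stated. The numbers of endpoint and interior blocks genuinely jump: when an interior evaluation point tends to $0$, the $pq$-dimensional irreducible $e_t$ degenerates into $q$ copies of the $p$-dimensional irreducible $c \circ e_0$. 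What is locally constant is the residue modulo $q$ (resp.\ modulo $p$) of the number of left (resp.\ right) endpoint blocks, and proving this needs the observation---which you omit---that near a fixed $s_0$ every eigenvalue of $\iota(\zeta)(s)$ close to $0$ but distinct from $0$ occurs with multiplicity divisible by $pq$. This is a minor, fixable gap; your ``trading'' mechanism is then the right normalisation.

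The genuine gap is in part~(2), in three respects. First, there is no reason for the singular parameters to be finitely many: the $t_j$ are merely continuous, so two of them can coincide on a Cantor set or on a whole subinterval, and your scheme of choosing continuous diagonalizers on the ``regular subintervals'' and patching them has no meaning in that case; the correct argument runs over a uniform partition of $[0,1]$, clustering eigenvalues at each partition point according to spectral gaps. Second, and more seriously, the smoothing device you describe fails for matrix blocks. Inside a cluster of nearly coincident values $t_{j_1}(s) \approx \dots \approx t_{j_r}(s) \approx t$, the relevant portion of $D(f)(s)$ is close to $\operatorname{diag}[f(t), \dots, f(t)]$ ($r$ repeats), and conjugating this by an \emph{arbitrary} unitary of the enlarged block $\mathbb{M}_{rpq}$ can move it by $\|f\|$ in norm no matter how small the spectral variation of $\iota(\zeta)$ is: already a permutation matrix carries $\operatorname{diag}(1,0,1,0)$ to $\operatorname{diag}(1,1,0,0)$, a perturbation of norm $1$. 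The transition rotations must instead be confined to the relative commutant of the clustered block form, i.e.\ to unitaries of the form $W \otimes 1_{pq}$ with $W \in \mathbb{M}_r$ (with a modified commutant at clusters touching $0$ or $1$, where the boundary conditions of $\mathcal{Z}_{p',q'}$ intervene), and one must show that the mismatch between consecutive local diagonalizers is close to such a commutant unitary composed with a block permutation; this requires the analysis of intertwiners of finite-dimensional representations of $\mathcal{Z}_{p,q}$ and the endpoint bookkeeping of the same kind as in the proof of Proposition~\ref{prop:_inner_automorphisms}. Third, you explicitly defer exactly this verification as ``the technical heart of the argument,'' so what you have for part~(2) is an outline whose crucial step is both missing and, as described, incorrect.
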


The family $t_1, \dots, t_k$ of continuous maps and the integers $a$ and $b$ 
that appeared in Proposition~\ref{prop:_approximate_diagonalizability} are
called an \emph{eigenvalue pattern} and the \emph{remainder indices} of 
the $*$-homomorphism $\iota$.  
An eigenvalue pattern $t_1, \dots, t_k$ is said to be \emph{normalized} 
if it satisfies the inequality $t_1 \leq \dots \leq t_k$.  
Note that the normalized eigenvalue pattern is unique for each $*$-homomorphism.  
Also, if $\mathcal{Z}_{p,q}$ is prime, then the remainder indices depend only on 
the integers $p$, $q$, $p'$ and $q'$.  Indeed, if $\eta$ is another $*$-homomorphism from 
$\mathcal{Z}_{p,q}$ into $\mathcal{Z}_{p',q'}$, and if 
$a_\eta$ and $b_\eta$ are the remainder indices of $\eta$, then
the congruence equation
\[
	pa + qb \equiv p'q' \equiv pa_\eta + qb_\eta \pmod{pq}
\]
holds, so that 
\[
\begin{aligned}
	a &\equiv a_\eta \pmod{q}, & b \equiv b_\eta \pmod{p}, 
\end{aligned}
\]
as $p$ and $q$ are coprime.  

Let $\iota$ be a $*$-homomorphism between dimension drop algebras with 
an eigenvalue pattern $t_1, \dots, t_k$.  
We shall denote by $V(t_1, \dots, t_k)$ the maximum of 
the diameters of the images of $t_1, \dots, t_k$, and 
call it the \emph{variation} of the eigenvalue pattern.  
The infimum of the variations of the eigenvalue patterns is 
clearly equal to the variation of the normalized eigenvalue pattern, 
which is called the variation of $\iota$ and denoted by $V(\iota)$.  
The next proposition is a variant of~\cite[Proposition~4.4]{masumoto16:_jiang_su}.  

\begin{prop}\label{prop:_embeddability_of_dimension_drop_algebras}
Let $p, q \in \mathbb{N}$ be coprime and $\varepsilon$ be a positive real number.  
Then there exists $M \in \mathbb{N}$ such that 
if $p', q'$ are larger than $M$, then there exists a unital embedding of 
$\mathcal{Z}_{p,q}$ into $\mathcal{Z}_{p',q'}$ with its variation less than $\varepsilon$.  
\end{prop}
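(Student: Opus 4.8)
The plan is to construct the embedding explicitly as a point‑evaluation homomorphism whose eigenvalue pattern consists of maps with small image, and to control the remainder indices via the coprimality of $p$ and $q$.  Recall from Proposition~\ref{prop:_approximate_diagonalizability} that any unital $*$‑homomorphism $\mathcal{Z}_{p,q} \to \mathcal{Z}_{p',q'}$ is (up to conjugation by a unitary path) determined by an eigenvalue pattern $t_1,\dots,t_k$ together with remainder indices $a,b$ satisfying the dimension count $pk + pa + qb = p'q'$, where the term $pa$ comes from the $a$ copies of $c(f(0)) \in \mathbb{M}_p$ and $qb$ from the $b$ copies of $c(f(1)) \in \mathbb{M}_q$.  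Conversely, to \emph{produce} an embedding it suffices to exhibit nonnegative integers $a,b,k$ with $0 \le a < q$, $0 \le b < p$, $k \ge 1$ satisfying this equation, together with boundary‑compatible continuous maps $t_1,\dots,t_k \colon [0,1] \to [0,1]$, because any such data assembles into a genuine unital $*$‑homomorphism, and injectivity will follow from simplicity of $\mathcal{Z}_{p,q}$ (or can be arranged directly by insisting that the $t_i$ jointly separate points).

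\smallskip

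First I would solve the arithmetic.  Since $p$ and $q$ are coprime, the congruence $pa \equiv p'q' \pmod q$ has a unique solution $a$ with $0 \le a < q$, and likewise a unique $b$ with $0 \le b < p$ solving $qb \equiv p'q' \pmod p$; one then checks that $p'q' - pa - qb$ is divisible by $pq$ and is nonnegative once $p',q'$ are large, so that $k := (p'q' - pa - qb)/p$ is a well‑defined nonnegative integer.  This fixes the threshold: I would take $M$ large enough (depending only on $p,q,\varepsilon$) that for all $p',q' > M$ the resulting $k$ is at least $\lceil 1/\varepsilon \rceil$, which is the number of eigenvalue maps I shall need to keep the variation small.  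Note that the remainder indices $a,b$ depend on $p',q'$ only through their residues, as already remarked after Proposition~\ref{prop:_approximate_diagonalizability}, so the boundary behaviour is automatically consistent.

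\smallskip

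Next I would choose the eigenvalue pattern.  To guarantee variation less than $\varepsilon$, I partition $[0,1]$ into $k$ subintervals $I_j = [(j-1)/k, j/k]$ and let each $t_j$ be a continuous map that sweeps monotonically across its interval $I_j$ (for instance the affine map $s \mapsto (j-1+s)/k$), so that the image of each $t_j$ has diameter $1/k \le \varepsilon$; hence $V(t_1,\dots,t_k) \le 1/k < \varepsilon$, and since the variation of $\iota$ is the infimum over patterns, $V(\iota) < \varepsilon$ as required.  The only constraint I must respect is the boundary condition built into $\mathcal{Z}_{p',q'}$: evaluating the diagonal at $s=0$ must land in $\mathbb{M}_{p'} \otimes 1_{q'}$ and at $s=1$ in $1_{p'} \otimes \mathbb{M}_{q'}$.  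This forces the eigenvalue maps to take the boundary values $t_j(0), t_j(1) \in \{0,1\}$ appropriately, together with the $a$ and $b$ boundary copies $c(f(0)), c(f(1))$; arranging the $t_j$ so that exactly the right multiplicities accumulate at each endpoint is a bookkeeping matter, and sending each $t_j$ to evaluate at the nearest endpoint of $I_j$ costs nothing in variation.

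\smallskip

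The main obstacle I anticipate is \emph{not} the size estimate but verifying that the boundary conditions at $s=0$ and $s=1$ can be met simultaneously with the prescribed remainder indices.  Concretely, the diagonal block at $s=0$ is built from $a$ copies of $c(f(0))$ together with the values $f(t_j(0))$, and for this to define an element of $\mathbb{M}_{p'} \otimes 1_{q'}$ one needs the multiset of boundary evaluations, suitably conjugated, to be compatible with the tensor‑factor structure — i.e.\ the total multiplicity of $f(0)$‑type blocks must be a multiple of $q$ and dually at $s=1$.  Checking that the congruences solved for $a$ and $b$ exactly guarantee this compatibility (so that a conjugating unitary $v_s$ as in Proposition~\ref{prop:_approximate_diagonalizability} genuinely exists at the endpoints, not merely in the interior) is the delicate point; once it is in place, the construction is complete and the embedding automatically has the desired small variation.
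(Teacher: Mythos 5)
Your outline follows the same general strategy as the paper's proof (solve the congruences $pa \equiv p'q' \pmod{q}$ and $qb \equiv p'q' \pmod{p}$ for the remainder indices, let $k$ count the interior eigenvalue maps, make $k$ large, and use maps with small images), but the proof has a genuine gap: the step you yourself flag as ``the delicate point'' --- boundary compatibility --- is precisely the mathematical content of the proposition, and your construction fails exactly there. First, a small slip: each block $f(t_j(s))$ has size $pq$, so the dimension count is $pqk + pa + qb = p'q'$ and $k = (p'q'-pa-qb)/(pq)$, not $pk + pa + qb = p'q'$. More seriously, for $\iota(f)(0)$ to lie in a conjugate of $\mathbb{M}_{p'}\otimes 1_{q'}$, every irreducible block must occur with multiplicity divisible by $q'$ (not by $q$, as you write): setting $n_0^0 = \#\{j \mid t_j(0)=0\}$ and $n_0^1 = \#\{j \mid t_j(0)=1\}$, one needs $a + qn_0^0 \equiv 0 \pmod{q'}$ and $b + pn_0^1 \equiv 0 \pmod{q'}$, and for every interior value $y \in (0,1)$ the multiplicity $\#\{j \mid t_j(0)=y\}$ must itself be divisible by $q'$; dually at $s=1$ with $p'$ in place of $q'$. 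Your explicit pattern --- one map $t_j$ sweeping each interval $I_j$, so each interior boundary value occurs with multiplicity one --- violates these conditions for every $q' > 1$, and the proposed repair (``sending each $t_j$ to evaluate at the nearest endpoint of $I_j$'') does nothing to restore divisibility by $q'$ or $p'$.

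The missing argument is exactly what occupies most of the paper's proof: one must produce integers $n_0^0, n_0^1$ (maps starting at $0$, resp.\ $1$) and $n_1^0, n_1^1$ (maps ending at $0$, resp.\ $1$) solving the above congruences modulo $q'$ and $p'$ (in the paper these come from auxiliary integers $l^0, m^0, l^1, m^1$ chosen by coprimality), and then verify the counting estimates: $n_0^0+n_0^1 < k$ and $n_1^0+n_1^1 < k$; that $q'$ divides $k - n_0^0 - n_0^1$ and $p'$ divides $k - n_1^0 - n_1^1$; and that the quotients $(k - n_0^0 - n_0^1)/q'$ and $(k - n_1^0 - n_1^1)/p'$ exceed $1/\varepsilon$. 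These last estimates are what allow the remaining maps to be organized into at least $1/\varepsilon$ groups, of size a multiple of $q'$ at $s=0$ and of $p'$ at $s=1$, whose images cover $[0,1]$ (covering, not ``separating points,'' is what injectivity requires) while each image has diameter less than $\varepsilon$. In particular your threshold ``$k \geq \lceil 1/\varepsilon\rceil$'' is not the right one; what must be large is the number of admissible \emph{groups}, not the number of maps. Asserting that the congruences for $a$ and $b$ ``exactly guarantee this compatibility'' is an unproven claim, and by themselves they do not: the boundary multiplicities require their own congruences and a nontrivial verification that they fit inside $k$. So the proposal identifies the right obstacle but does not overcome it.
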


\begin{proof}
Since $p$ and $q$ are coprime, there exists $M \in \mathbb{N}$ with 
$M \geq pq(1/\varepsilon+2)$ such that if $p', q' > M$, then 
\[
	pa + pqk + qb = p'q'
\]
for some $a, b, k \in \mathbb{N}$.  
Without loss of generality, we may assume $0 \leq a < q$ and $0 \leq b < p$.  
Also, one can find $l^0, m^0 \in [0,q)$ and $l^1, m^1 \in [0,p)$ such that 
\[
\begin{aligned}
	pl^0 & \equiv p' \pmod{q}, & pm^0 & \equiv q' \pmod{q}, \\
	ql^1 & \equiv p' \pmod{p}, & qm^1 & \equiv q' \pmod{p}.  
\end{aligned}
\]
Then, 
\[
\begin{aligned}
	pq'l^0 &\equiv pp'm^0 \equiv p'q' \equiv pa \pmod{q}, \\
	qq'l^1 &\equiv qp'm^1 \equiv p'q' \equiv qb \pmod{p}, 
\end{aligned}
\]
so 
\[
\begin{aligned}
	q'l^0 &\equiv p'm^0 \equiv a \pmod{q}, & q'l^1 \equiv p'm^1 \equiv b \pmod{p}.  
\end{aligned}
\]
We set 
\[
\begin{aligned}
	n_0^0 &:= \frac{q'l^0-a}{q}, & n_0^1 &:= \frac{q'l^1-b}{p}, \\
	n_0^0 &:= \frac{p'm^0-a}{q}, & n_0^1 &:= \frac{p'm^1-b}{p}.  
\end{aligned}
\]
so that 
\[
\begin{aligned}
	a+qn_0^0 &\equiv b + pn_0^1 \equiv 0 \pmod{q'}, \\
	a+qn_1^0 &\equiv b + pn_1^1 \equiv 0 \pmod{p'}.  	
\end{aligned}
\]

We claim that 
\begin{itemize}
\item
	$n_0^0 + n_0^1$ and $n_1^0 + n_1^1$ are smaller than $k$; 
\item
	$k - n_0^0 - n_0^1$ and $k - n_1^0 - n_1^1$ are multiples of $q'$ and $p'$ respectively; 
	and
\item
	$(k - n_0^0 - n_0^1)/q'$ and $(k - n_1^0 - n_1^1)/p'$ are larger than $1/\varepsilon$.  
\end{itemize}
Indeed, we have 
\[
\begin{aligned}
	n_0^0 + n_0^1 
	&= \frac{q'l^0 - a}{q} + \frac{q'l^1 - b}{p} \\
	&= \frac{q'(pl^0+ql^1) - pa - qb}{pq} \\
	&< \frac{2q'pq - p'q' + pqk}{pq} < k.  
\end{aligned}
\]
Also, note that 
\[
	pq(k - n_0^0 - n_0^1) = p'q' - pq'l^0 - qq'l^1 = q'(p' - pl^0 - ql^1).  
\]
Since $p$ and $q$ divide $p'-ql^1$ and $p'-pl^0$ respectively, 
and since $p$ and $q$ are coprime, it follows that $pq$ divides $p' - pl^0 - ql^1$, 
so $q'$ divides $k - n_0^0 - n_0^1$; and 
\[
	\frac{k - n_0^0 - n_0^1}{q'} = \frac{p' - pl^0 - ql^1}{pq} 
	> \frac{p' - 2pq}{pq} > \frac{1}{\varepsilon}.  
\]
Similarly, it follows that $n_1^0 + n_1^1$ is smaller than $k$, 
that $p'$ divides $k - n_1^0 + n_1^1$, 
and that $(k - n_1^0 - n_1^1)/p'$ is larger than $1/\varepsilon$.  

From the claim in the previous paragraph, 
one can easily construct a family $t_1, \dots t_k$ of continuous maps 
from $[0,1]$ into $[0,1]$ such that 
\begin{itemize}
\item
	the union of the images of $t_1, \dots, t_k$ is equal to $[0,1]$; 
\item
	the diameter of the image of $t_i$ is smaller than $\varepsilon$ for all $i$; 
\item
	$\#\{ i \mid t_i(x) = y \} = n_x^y$ for $x,y = 0,1$; and
\item
	for each $y$ with $0 < y < 1$, the integers $q'$ and $p'$ divide 
	$\#\{ i \mid t_i(0) = y \}$ and $\#\{ i \mid t_i(1) = y \}$ respectively.  
\end{itemize}
If we define a $*$-homomorphism $\eta$ from $\mathcal{Z}_{p,q}$ into 
$C([0,1],\mathbb{M}_{p'q'})$ by 
\[
\begin{aligned}
	\eta\bigl(f\bigr)(s) 
	= \Bigl( \operatorname{diag}
	\bigl[\overbrace{c(f(0)), \dots, c(f(0))}^a, & \\
	f(t_1(s)), \dots, f(t_k(s)), &\underbrace{c(f(1)), \dots, c(f(1))}_b \bigr] \Bigr), 
\end{aligned}
\]
then one can easily verify from the construction of $t_1, \dots, t_k$ that 
the images of $e_0 \circ \eta$ and $e_1 \circ \eta$ are included in 
isomorphic copies of $\mathbb{M}_{p'} \otimes 1_{q'}$ and 
$1_{p'} \otimes \mathbb{M}_{q'}$ respectively, 
so there is a unitary $u \in C([0,1],\mathbb{M}_{p'q'})$ 
with $\operatorname{Im} \operatorname{Ad}(u) \circ \eta \subseteq \mathcal{Z}_{p',q'}$.  
\end{proof}

Note that the integers $a$ and $b$ in the proof of the previous proposition is 
the reminder indices of the embedding that is constructed.  
In particular, both of the indices are equal to $0$ if $pq$ divides $p'q'$.  

The next proposition is also a slight modification 
of~\cite[Lemma~4.9]{masumoto16:_jiang_su}.  
Recall that a \emph{modulus of uniform continuity} of a function $f$ on $[0,1]$ is 
a map $\Delta_f \colon (0,1] \to (0,1]$ such that $|s-s'| < \Delta_f(\varepsilon)$ 
implies $\|f(s) - f(s')\| \leq \varepsilon$.  

\begin{prop}\label{prop:_inner_automorphisms}
Let $p, q$ be coprime positive integers, 
$\iota_1, \iota_2 \colon \mathcal{Z}_{p,q} \to \mathcal{Z}_{p',q'}$ be 
unital $*$-homomorphisms with eigenvalue patterns 
$t_1^1, \dots t_k^1$ and $t_1^2, \dots, t_k^2$ respectively, 
$G$ be a finite subset of $\mathcal{Z}_{p,q}$, 
and $\varepsilon$ be a positive real number.  
If the inequality 
\[
	\max_i\|t_i^1 - t_i^2\|_\infty < \min_{g \in G} \Delta_g(\varepsilon)
\]
holds, where $\Delta_g$ is a modulus of uniform continuity of $g$, 
then there exists a unitary $w \in \mathcal{Z}_{p',q'}$ with
\[
	\|\operatorname{Ad}(w) \circ \iota_1(g) - \iota_2(g)\| < 5\varepsilon
\]
for all $g \in G$.  
\end{prop}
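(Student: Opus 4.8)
The plan is to reduce both maps to a common ``diagonal part plus conjugating path'' form, compare the diagonal parts directly through the modulus of continuity, and then obtain the intertwining unitary as the product of the two conjugating paths; the only genuine difficulty will be the behaviour at the endpoints $0$ and $1$.

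First I would record that, since $\mathcal{Z}_{p,q}$ is prime, the remainder indices of $\iota_1$ and $\iota_2$ coincide; call them $a$ and $b$. Writing
\[
	\Psi_j(f)(s) := \operatorname{diag}\bigl[\overbrace{c(f(0)),\dots,c(f(0))}^{a},\,f(t_1^j(s)),\dots,f(t_k^j(s)),\,\underbrace{c(f(1)),\dots,c(f(1))}_{b}\bigr]
\]
for $j=1,2$, the diagonal maps $\Psi_1,\Psi_2$ then have identical block structure and differ only in the eigenvalue maps. Using parts (1) and (2) of Proposition~\ref{prop:_approximate_diagonalizability}, I may replace $\iota_1,\iota_2$ by maps $\iota_j'(f)(s)=\operatorname{Ad}(u_j(s))\Psi_j(f)(s)$ with $u_j\colon[0,1]\to\mathbb{M}_{p'q'}$ a \emph{continuous} path of unitaries and $\iota_j'$ genuinely landing in $\mathcal{Z}_{p',q'}$, at the cost of $\|\iota_j(g)-\iota_j'(g)\|<\varepsilon$ for all $g\in G$.

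The core estimate is that the diagonal parts are uniformly close: because the remainder blocks are identical, for every $s$ and every $g\in G$,
\[
	\|\Psi_1(g)(s)-\Psi_2(g)(s)\|=\max_i\|g(t_i^1(s))-g(t_i^2(s))\|\leq\varepsilon,
\]
since $|t_i^1(s)-t_i^2(s)|\leq\|t_i^1-t_i^2\|_\infty<\Delta_g(\varepsilon)$. Hence the continuous unitary path $W(s):=u_2(s)u_1(s)^*$ satisfies $\operatorname{Ad}(W(s))\iota_1'(g)(s)=\operatorname{Ad}(u_2(s))\Psi_1(g)(s)$, which lies within $\varepsilon$ of $\iota_2'(g)(s)=\operatorname{Ad}(u_2(s))\Psi_2(g)(s)$; thus $\|\operatorname{Ad}(W)\iota_1'(g)-\iota_2'(g)\|\leq\varepsilon$. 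Were $W$ already in $\mathcal{Z}_{p',q'}$, combining the three estimates would give $\|\operatorname{Ad}(W)\iota_1(g)-\iota_2(g)\|<3\varepsilon$ and we would be finished with room to spare.

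The main obstacle is precisely that $W$ need not belong to $\mathcal{Z}_{p',q'}$: although $W(0)$ conjugates $\iota_1'(\cdot)(0)$ to within $\varepsilon$ of the boundary algebra $\mathbb{M}_{p'}\otimes 1_{q'}$ (and similarly at $1$), the unitary $W(0)$ itself need not lie in that subalgebra. To repair this I would modify $W$ only on two short intervals $[0,\delta]$ and $[1-\delta,1]$, replacing it on $[0,\delta]$ by a continuous path from a genuine boundary unitary $z_0\in\mathbb{M}_{p'}\otimes 1_{q'}$ to $W(\delta)$ that stays inside the set of unitaries approximately conjugating $e_0\circ\iota_1'$ to $e_0\circ\iota_2'$. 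The existence of such a $z_0$ is the delicate point: the two endpoint representations have the same block multiplicities up to a displacement of the eigenvalues by less than $\Delta_g(\varepsilon)$, and primeness forces the interior multiplicities to be divisible by $q'$, so the matching can be performed compatibly with the $\otimes 1_{q'}$ structure; connectedness of the unitary group of the relevant commutant then lets the path be chosen without leaving the approximate-conjugator set. Taking $\delta$ so small that $\iota_1'(g)$ and $\iota_2'(g)$ vary by less than $\varepsilon$ on each of the two intervals, the surgery costs at most an extra $2\varepsilon$ there, giving $\|\operatorname{Ad}(w)\iota_1'(g)-\iota_2'(g)\|<3\varepsilon$ for the resulting $w\in\mathcal{Z}_{p',q'}$; adding back the two replacement errors yields $\|\operatorname{Ad}(w)\iota_1(g)-\iota_2(g)\|<5\varepsilon$. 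I expect this endpoint surgery—producing $z_0$ and the controlled connecting path—to be the one substantive step, everything else being bookkeeping with the modulus of continuity.
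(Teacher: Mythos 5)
Your proposal is correct and follows essentially the same route as the paper's proof: diagonalize both maps via Proposition~\ref{prop:_approximate_diagonalizability}, compare the diagonal parts through the modulus of continuity, use $W=u_2u_1^*$ away from the endpoints, and repair the endpoint failure using boundary unitaries (your $z_0$ is exactly the paper's $[v_0^2(v_0^1)^*]\otimes 1_{q'}$, whose existence comes from the same forced divisibility of the interior multiplicities by $q'$ and $p'$) connected through the commutant of the endpoint representation. The only difference is organizational: you perform the surgery directly on $W$, whereas the paper first replaces $\iota_1$ by $\operatorname{Ad}(v)\circ\iota_1$ for a path $v\in\mathcal{Z}_{p',q'}$ joining the two boundary unitaries and then builds a path $w$ with $w(0)=w(1)=1$; the resulting conjugating unitary is the same.
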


\begin{proof}
By Proposition~\ref{prop:_approximate_diagonalizability}, 
we may assume without loss of generality that $\iota_j$ is 
of the form
\[
\begin{aligned}
	\iota_j\bigl(f\bigr)(s) 
	= \operatorname{Ad}(u^j(s)) \Bigl( \operatorname{diag}
	\bigl[\overbrace{c(f(0)), \dots, c(f(0))}^a, & \\
	f(t_1^j(s)), \dots, f(t_k^j(s)), &\underbrace{c(f(1)), \dots, c(f(1))}_b \bigr] \Bigr) 
\end{aligned}	
\]
for $f \in \mathcal{Z}_{p,q}$, where $u^j \in C([0,1],\mathbb{M}_{p'q'})$ 
is a unitary and $t_1^j \leq \dots \leq t_k^j$.  
Also, we may assume that $\|g\| \leq 1$ for all $g \in G$.  

Let $n_0^0$ and $n_0^1$ be the least non-negative integers such that 
\[
	a + qn_0^0 \equiv b + pn_0^1 \equiv 0 \pmod{q'}.  
\]
Then, from the condition $\iota_j(f) \in \mathbb{M}_{p'} \otimes 1_{q'}$, 
it easily follows that 
\[
\begin{aligned}
	0 = t_{n_0^0}^j(0) 
	&\leq t_{n_0^0+1}^j(0) = \dots = t_{n_0^0+q'}^j(0) \\
	&\leq t_{n_0^0+q'+1}^j(0) = \dots = t_{n_0^0+2q'}^j(0) \\
	&\leq \dots \leq t_{k-n_0^1+1}^j(0) = 1, 
\end{aligned}
\]
and there exists a unitary $v_0^j \in \mathbb{M}_{p'}$ such that 
\[
\begin{aligned}
	c\bigl(\iota_j\bigl(f\bigr)(0)\bigr) 
	= \operatorname{Ad}(v_0^j) \Bigl( \operatorname{diag}
	\bigl[&\overbrace{c(f(0)), \dots, c(f(0))}^{a'}, \\
	&f\bigl(t_{n_0^0 + q'}^j(0)\bigr), f\bigl(t_{n_0^0 + 2q'}^j(0)\bigr), \dots, 
	f\bigl(t_{k-n_0^1}^j(0)\bigr), \\
	&\qquad\qquad\qquad\qquad\quad\underbrace{c(f(1)), \dots, c(f(1))}_{b'} \bigr] \Bigr)  
\end{aligned}
\]
for some non-negative integers $a'$ and $b'$.  
Similarly, for suitable non-negative integers $n_1^0, n_1^1, a''$ and $b''$ and 
a unitary $v_1^j \in \mathbb{M}_{q'}$, we have 
\[
\begin{aligned}
	c\bigl(\iota_j\bigl(f\bigr)(1)\bigr) 
	= \operatorname{Ad}(v_1^j) \Bigl( \operatorname{diag}
	\bigl[&\overbrace{c(f(0)), \dots, c(f(0))}^{a''}, \\
	&f\bigl(t_{n_1^0 + p'}^j(1)\bigr), f\bigl(t_{n_1^0 + 2p'}^j(1)\bigr), \dots, 
	f\bigl(t_{k-n_1^1}^j(p)\bigr), \\
	&\qquad\qquad\qquad\qquad\quad\underbrace{c(f(1)), \dots, c(f(1))}_{b''} \bigr] \Bigr).  
\end{aligned}
\]
Thus, if $v$ is a path of unitaries connecting $\bigl[v_0^2(v_0^1)^*\bigr] \otimes 1_{q'}$ 
to $1_{p'} \otimes \bigl[v_1^2(v_1^1)^*\bigr]$, then $v$ is in $\mathcal{Z}_{p',q'}$ and 
$\bigl(\operatorname{Ad}(v) \circ \iota_1\bigr)\bigl(f\bigr)(s) = 
\bigl(\operatorname{Ad}(u^2(u^1)^*) \circ \iota_1\bigr)\bigl(f\bigr)(s)$ 
for $s = 0, 1$.  Therefore, considering $\operatorname{Ad}(v) \circ \iota_1$ 
instead of $\iota_1$ if necessary, 
we may assume from the outset that $u^2(0)u^1(0)^*$ and $u^2(1)u^1(1)^*$ commutes 
with every matrix in the image of $e_0 \circ \iota_1$ and $e_1 \circ \iota_1$.  

Now, take $\delta > 0$ so that $|s-s'| < \delta$ implies 
$|t_i^j(s) - t_i^j(s')| < \min_{g \in G} \Delta_g(\varepsilon)$ and 
$\|u^j(s) - u^j(s')\| < \varepsilon$.  
Let $w \colon [0,1] \to \mathbb{M}_{p'q'}$ be a path of unitaries such that 
\begin{itemize}
\item
	$w|_{[0, \delta/2]}$ connects $1_{p'q'}$ to $u^2(0)u^1(0)^*$ within 
	the commutant of the image of $e_0 \circ \iota_1$; 
\item
	$w(s) = u^2(2s-\delta')u^1(2s-\delta)^*$ for $s \in [\delta/2, \delta]$; 
\item
	$w(s) = u^2(s)u^1(s)^*$ for $s \in [\delta, 1-\delta]$; 
\item
	$w(s) = u^2(2s-1+\delta)u^1(2s-1+\delta)^*$ for $s \in [1-\delta, 1-\delta/2]$; and
\item
	$w|_{[1-\delta/2, 1]}$ connects $u^2(1)u^1(1)^*$ to $1_{p'q'}$ within 
	the commutant of the image of $e_0 \circ \iota_1$.  
\end{itemize}
Then it is not difficult to see that this $w$ has the desired property.  
\end{proof}

Now, let $\mu$ be a probability Radon measure on $[0,1]$.  Then 
\[
\begin{aligned}
	f &\mapsto \int_0^1 \operatorname{tr}(f(t))\, d\mu(t) & (f \in \mathcal{Z}_{p,q})
\end{aligned}
\]
is a tracial state on $\mathcal{Z}_{p,q}$, 
where $\operatorname{tr}$ denotes the normalized trace on $\mathbb{M}_{pq}$.  
One can easily show that every tracial state on a dimension drop algebra is 
of this form.  Henceforth, we identify probability Radon measures on $[0,1]$ with
tracial states on dimension drop algebras and use the same adjectives for 
measures and traces in common.  Thus, for example, a trace is said to be atomless 
if its corresponding measure is atomless.  

The following lemma is useful for dealing with faithful measures on $[0,1]$.  
A proof can be found in~\cite[Lemma~3.1 and Proposition~3.2]{masumoto16:_jiang_su}, 
for example.   

\begin{lem}\label{lem:_transformations_of_measures}
If $\lambda$ is an atomless faithful probability measure on $[0,1]$, 
then for any faithful probability measure $\tau$ on $[0,1]$, 
there exists a non-decreasing continuous surjection $\beta$ from $[0,1]$ onto $[0,1]$ 
with $\beta_*(\lambda) = \tau$, so that $\beta^*$ is a trace-preserving embedding of
$\langle \mathcal{Z}_{p,q}, \tau \rangle$ into $\langle \mathcal{Z}_{p,q}, \lambda \rangle$.  
\end{lem}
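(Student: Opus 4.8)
The plan is to produce $\beta$ explicitly as the composition of the distribution function of $\lambda$ with the generalized inverse of that of $\tau$, and then read off each clause of the statement. Write $F_\lambda(t) := \lambda([0,t])$ and $F_\tau(t) := \tau([0,t])$, and define the quantile function
\[
	Q(u) := \inf\{s \in [0,1] \mid F_\tau(s) \geq u\} \qquad (u \in (0,1]),
\]
with $Q(0) := 0$; the candidate is $\beta := Q \circ F_\lambda$. It is non-decreasing since $F_\lambda$ and $Q$ are. For the topological properties I would use the hypotheses precisely: atomlessness of $\lambda$ makes $F_\lambda$ continuous, and faithfulness of $\lambda$ makes it strictly increasing with $F_\lambda(0) = 0$ and $F_\lambda(1) = 1$, so $F_\lambda$ is a homeomorphism of $[0,1]$; faithfulness of $\tau$ makes $F_\tau$ strictly increasing, which is exactly the condition preventing the monotone function $Q$ from having any jump, so $Q$ is continuous. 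Hence $\beta$ is continuous, and since $\beta(0) = Q(0) = 0$ and $\beta(1) = Q(1) = 1$ it is surjective by the intermediate value theorem.

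Next I would check $\beta_*(\lambda) = \tau$ through the probability integral transform together with the Galois relation $Q(u) \le s \iff u \le F_\tau(s)$, valid for the generalized inverse of a right-continuous distribution function. Since $F_\lambda$ is a homeomorphism, it pushes $\lambda$ forward to Lebesgue measure $m$ on $[0,1]$, i.e.\ $(F_\lambda)_*\lambda = m$; and the Galois relation gives $m(\{u \mid Q(u) \le s\}) = m([0, F_\tau(s)]) = F_\tau(s)$, so that $Q_* m = \tau$. Composing, $\beta_*\lambda = Q_*(F_\lambda)_*\lambda = Q_* m = \tau$. (Equivalently, one verifies the identity directly on distribution functions.)

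It then follows formally that $\beta^* \colon f \mapsto f \circ \beta$ is the desired embedding. Because $\beta(0) = 0$ and $\beta(1) = 1$, for $f \in \mathcal{Z}_{p,q}$ the function $f \circ \beta$ still satisfies $(f \circ \beta)(0) = f(0) \in \mathbb{M}_p \otimes 1_q$ and $(f \circ \beta)(1) = f(1) \in 1_p \otimes \mathbb{M}_q$, so $\beta^*$ maps $\mathcal{Z}_{p,q}$ into itself; it is a unital $*$-homomorphism, and it is injective (hence isometric) since surjectivity of $\beta$ forces $f \circ \beta = 0 \Rightarrow f = 0$. Trace preservation is the change of variables
\[
	\int_0^1 \operatorname{tr}\bigl(f(\beta(t))\bigr)\, d\lambda(t)
	= \int_0^1 \operatorname{tr}\bigl(f(s)\bigr)\, d(\beta_*\lambda)(s)
	= \int_0^1 \operatorname{tr}\bigl(f(s)\bigr)\, d\tau(s),
\]
using $\beta_*\lambda = \tau$, which says precisely that $\beta^*$ carries the trace of $\langle \mathcal{Z}_{p,q}, \lambda\rangle$ back to that of $\langle \mathcal{Z}_{p,q}, \tau\rangle$.

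The step I expect to require the most care is the continuity of $\beta$, since this is the only place where both faithfulness assumptions and the atomlessness of $\lambda$ are all genuinely used. The delicate point is the behaviour of $Q$: atoms of $\tau$ merely create flat segments of $Q$ and are harmless, whereas any interval of $\tau$-measure zero would create a jump of $Q$, which is exactly what faithfulness of $\tau$ rules out; dually, an atom of $\lambda$ would simultaneously break the continuity of $F_\lambda$ and the probability integral transform. Once continuity and the endpoint values $\beta(0) = 0$, $\beta(1) = 1$ are secured, the remaining verifications are routine.
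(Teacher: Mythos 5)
Your proof is correct and complete. Note that the paper itself gives no in-text argument for this lemma --- it defers entirely to \cite[Lemma~3.1 and Proposition~3.2]{masumoto16:_jiang_su} --- so there is nothing internal to compare against; your construction $\beta = Q \circ F_\lambda$ from the distribution function of $\lambda$ and the generalized inverse (quantile function) of $F_\tau$ is the standard route that such a citation points to. All the delicate points are handled correctly: atomlessness plus faithfulness of $\lambda$ make $F_\lambda$ a homeomorphism of $[0,1]$; faithfulness of $\tau$ rules out flat pieces of $F_\tau$ and hence jumps of $Q$ (while atoms of $\tau$ only produce harmless flat pieces of $Q$); the Galois relation $Q(u) \leq s \Leftrightarrow u \leq F_\tau(s)$, valid by right-continuity of $F_\tau$, yields $Q_*m = \tau$ and hence $\beta_*\lambda = \tau$; and the endpoint values $\beta(0)=0$, $\beta(1)=1$ are exactly what make $\beta^*$ respect the dimension-drop boundary conditions, with injectivity (hence isometry) following from surjectivity of $\beta$ and trace preservation from the change-of-variables formula.
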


We shall define the category $\mathscr{K}_\mathcal{Z}$ as following.  
\begin{itemize}
\item
	$\operatorname{Obj}(\mathscr{K}_\mathcal{Z})$ is the class of 
	all the pairs $\langle \mathcal{Z}_{p,q}, \tau \rangle$, 
	where $p, q$ are coprime and $\tau$ is a faithful tracial state on $\mathcal{Z}_{p,q}$.  
\item
	Every $L_{\mathrm{TC}^*}$-embedding between objects of $\mathscr{K}_\mathcal{Z}$ is 
	a morphism of $\mathscr{K}_\mathcal{Z}$.  
\end{itemize}
The following theorem was first proved in~\cite{eagle16:_fraisse_limits}.  
The proof presented here is essentially the same as 
that of~\cite{masumoto16:_jiang_su}.  

\begin{thm}\label{thm:_dimension_drop_algebras_form_a_fraisse_class}
The category $\mathscr{K}_\mathcal{Z}$ is a Fraïssé category.  
\end{thm}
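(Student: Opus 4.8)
The plan is to verify the four defining properties of a Fraïssé category — JEP, NAP, WPP and CCP — in turn, the bulk of the work lying in the near amalgamation property. The unifying device is a single construction: for every object $\langle \mathcal{Z}_{p,q}, \mu \rangle$ of $\mathscr{K}_\mathcal{Z}$ I will produce a trace-preserving unital embedding of it into $\langle \mathcal{Z}_{p', q'}, \lambda \rangle$, where $\lambda$ denotes the Lebesgue measure on $[0,1]$ (atomless and faithful) and $p', q'$ are suitably large coprime integers, and whose variation is as small as desired. Granting such embeddings, JEP is immediate — apply the construction to two given objects with one common target $\langle \mathcal{Z}_{p', q'}, \lambda \rangle$ — and NAP will follow by aligning two such embeddings with the unitary supplied by Proposition~\ref{prop:_inner_automorphisms}.

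I construct the small-variation trace-preserving embedding as follows. A direct computation from the diagonal form in Proposition~\ref{prop:_approximate_diagonalizability} shows that an embedding with eigenvalue pattern $t_1, \dots, t_k$ and vanishing remainder indices pulls the trace associated to a measure $\nu$ back to the measure $\tfrac{pq}{p'q'}\sum_{j} (t_j)_*\nu$. Choosing $p', q'$ with $pq \mid p'q'$ (so that the remainder indices vanish) and $k = p'q'/(pq)$, and letting $Q$ be the quantile function of $\mu$, I take the $t_j$ to have image the short interval $Q\bigl([(j-1)/k, j/k]\bigr)$; since $\mu$ is faithful, $Q$ is continuous, so these images have diameter tending to $0$ as $p'q' \to \infty$, and summing the pushforwards over the partition $\{[(j-1)/k, j/k]\}_j$ yields $\tfrac{pq}{p'q'}\sum_j (t_j)_*\lambda = Q_*\lambda = \mu$ exactly. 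The eigenvalue maps must, however, be clustered at the endpoints so that the interior values at $s = 0$ and $s = 1$ occur with multiplicities divisible by $q'$ and $p'$ respectively, which is what the boundary-condition bookkeeping of Proposition~\ref{prop:_embeddability_of_dimension_drop_algebras} arranges; crucially, the endpoint values $t_j(0)$ and $t_j(1)$ are Lebesgue-null data, so they can be reset to clustered values (adjusting each $t_j$ only on a short initial and final segment and compensating within its own short image) without disturbing the identity $\tfrac{pq}{p'q'}\sum_j (t_j)_*\lambda = \mu$. This yields the required trace-preserving embedding of small variation, and hence JEP.

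For NAP, let $\iota_i \colon \mathcal{A} = \langle \mathcal{Z}_{p,q}, \mu \rangle \to \mathcal{B}_i$ ($i = 1, 2$) be the given morphisms, $G \subseteq \mathcal{A}$ finite and $\varepsilon > 0$. Fix one coprime pair $(p', q')$ admissible for both $\mathcal{B}_1$ and $\mathcal{B}_2$ and apply the construction above to obtain trace-preserving embeddings $\eta_i \colon \mathcal{B}_i \to \langle \mathcal{Z}_{p', q'}, \lambda \rangle$ of small variation. The composites $\phi_i := \eta_i \circ \iota_i \colon \mathcal{A} \to \langle \mathcal{Z}_{p', q'}, \lambda \rangle$ are trace-preserving; moreover each $\phi_i$ again has small variation, because the outer factor $\eta_i$ has short-image eigenvalue maps and precomposition feeds these short images into the fixed continuous eigenvalue maps of $\iota_i$. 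Since $\mathcal{Z}_{p,q}$ is prime, $\phi_1$ and $\phi_2$ share the same remainder indices and hence the same number $k$ of eigenvalues. The key rigidity is that a trace-preserving embedding $\phi \colon \langle \mathcal{Z}_{p,q}, \mu \rangle \to \langle \mathcal{Z}_{p', q'}, \lambda \rangle$ of small variation has its normalized eigenvalue pattern pinned down by $\mu$: the pull-back identity $\tfrac{pq}{p'q'}\sum_j (u_j)_*\lambda = \mu$ together with the concentration of each $(u_j)_*\lambda$ near a single point forces the sorted maps $u_j$ to be, up to an error controlled by the variation and by $pq/p'q'$, the successive empirical quantiles of $\mu$. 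Applied to $\phi_1$ and $\phi_2$, whose boundary contributions coincide, this gives $\max_j \| u_j^1 - u_j^2 \|_\infty < \min_{g \in G} \Delta_g(\varepsilon/5)$ once the variation is small and $p', q'$ large. Proposition~\ref{prop:_inner_automorphisms} then produces a unitary $w \in \mathcal{Z}_{p', q'}$ with $\| \operatorname{Ad}(w) \circ \phi_1(g) - \phi_2(g) \| < \varepsilon$ for all $g \in G$; as $\operatorname{Ad}(w)$ is inner and therefore trace-preserving, $(\operatorname{Ad}(w) \circ \eta_1, \eta_2)$ is a joint $\mathscr{K}_\mathcal{Z}$-embedding of $\mathcal{B}_1$ and $\mathcal{B}_2$ witnessing NAP on $G$. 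The step I expect to be the main obstacle is precisely this reconciliation of exact trace-preservation, small variation, and the endpoint divisibility conditions — the combination that both makes the construction of $\eta_i$ legitimate and pins the two patterns close enough for Proposition~\ref{prop:_inner_automorphisms} to apply.

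Finally, CCP and WPP are routine. For CCP, along a $d^\mathscr{K}$-Cauchy sequence the generators have uniformly bounded norm (norms are preserved by $L$-embeddings, across which the pseudo-metric compares generators), so the polynomial $*$-operations are uniformly continuous on the relevant ball and the predicate $\operatorname{tr}$ is $1$-Lipschitz and embedding-invariant; both clauses of CCP follow. For WPP I exhibit a countable $d^\mathscr{K}$-dense subset of each $\mathscr{K}_n$ by ranging over the countably many coprime pairs $(p, q)$, over a norm-dense countable family of generating tuples in each $\mathcal{Z}_{p,q}$, and over a countable family of faithful measures whose quantile functions are uniformly dense (for instance those with strictly increasing, piecewise-linear distribution functions with rational data). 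Norm approximation of the generators feeds directly into $d^\mathscr{K}$ through the identity joint embedding, while Lemma~\ref{lem:_transformations_of_measures} lets one embed $\langle \mathcal{Z}_{p,q}, \mu \rangle$ and a nearby $\langle \mathcal{Z}_{p,q}, \mu' \rangle$ into a common $\langle \mathcal{Z}_{p,q}, \lambda \rangle$ through the quantile transports, so that $d^\mathscr{K}$ between them is bounded by $\max_i \| a_i \circ Q_\mu - a_i \circ Q_{\mu'} \|_\infty$, which is small when the quantile functions are uniformly close; separability of $\mathscr{K}_n$ follows.
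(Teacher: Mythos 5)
Your proposal is correct and follows essentially the same route as the paper: a trace-preserving small-variation embedding into $\langle \mathcal{Z}_{p',q'},\lambda\rangle$ (the paper gets this by combining Lemma~\ref{lem:_transformations_of_measures} with Proposition~\ref{prop:_embeddability_of_dimension_drop_algebras}, deferring details to the cited reference) yields JEP; NAP then comes from composing such embeddings with the given morphisms and aligning the resulting eigenvalue patterns via Proposition~\ref{prop:_inner_automorphisms}, which is exactly the paper's ``combination of Propositions~\ref{prop:_approximate_diagonalizability}, \ref{prop:_embeddability_of_dimension_drop_algebras} and~\ref{prop:_inner_automorphisms}''; and your WPP/CCP arguments match the paper's quantile-transport and Lipschitz observations. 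Your explicit quantile-rigidity step (trace preservation plus small variation pins the normalized pattern) is a correct filling-in of what the paper leaves implicit, not a different method.
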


\begin{proof}
In view of Lemma~\ref{lem:_transformations_of_measures}, 
one can easily modify the proof of 
Proposition~\ref{prop:_embeddability_of_dimension_drop_algebras} 
to show the following claim: For any coprime integers $p$ and $q$ and 
any faithful tracial state $\tau$ on $\mathcal{Z}_{p,q}$, 
there exists a natural number $M$ such that if $p'$ and $q'$ are larger than $M$ 
and $pq$ divides $p'q'$, then we can construct a \emph{trace-preserving} $*$-homomorphism 
from $\langle \mathcal{Z}_{p,q}, \tau \rangle$ 
into $\langle\mathcal{Z}_{p',q'}, \lambda \rangle$, 
where $\lambda$ is any atomless tracial state on $\mathcal{Z}_{p',q'}$.  
(For a precise proof of this claim, 
see~\cite[Proposition~4.4]{masumoto16:_jiang_su}.)  
So $\mathscr{K}_\mathcal{Z}$ satisfies JEP.  
Also, a combination of Propositions 
\ref{prop:_approximate_diagonalizability}, 
\ref{prop:_embeddability_of_dimension_drop_algebras} 
and~\ref{prop:_inner_automorphisms} immediately yields a proof of NAP.  
Next, fix an atomless measure $\lambda$ on $[0,1]$.  
Then any object $\langle \mathcal{Z}_{p,q}, \tau \rangle$ of $\mathscr{K}_\mathcal{Z}$ 
can be embedded into $\langle \mathcal{Z}_{p,q},\lambda \rangle$ 
by Lemma~\ref{lem:_transformations_of_measures}, so WPP follows.  
Finally, CCP is automatic, since all the relevant functions and relations are 
$1$-Lipschitz on the unit ball.  
\end{proof}

Henceforth, we shall denote by $\langle \mathcal{Z}, \operatorname{tr} \rangle$ 
the Fraïssé limit of $\mathscr{K}_\mathcal{Z}$.  
From Theorem~\ref{thm:_characterization_of_jiang_su_algebra} below, 
it follows that the C*-algebra $\mathcal{Z}$ is the same as 
the one constructed in~\cite[Section~2]{jiang99:_simple_unital}, 
the so-called Jiang--Su algebra.  

We shall say an inductive system of prime dimension drop algebras with 
distinguished traces is \emph{regular} 
if its inductive limit is isomorphic to $\langle \mathcal{Z}, \operatorname{tr} \rangle$.  
In the sequel, we shall establish a method of recognizing regular systems.  

\begin{lem}\label{lem:_universal_measures}
Suppose that $p$ and $q$ are coprime and 
$\mathcal{Z}_{p,q}$ is embeddable into $\mathcal{Z}_{p',q'}$.  
Then there exists a tracial state $\lambda_{p',q'}$ on $\mathcal{Z}_{p,q}$ 
with the following properties.  
\begin{enumerate}
\item
	There exists a trace-preserving embedding 
	from $\langle \mathcal{Z}_{p,q}, \lambda_{p',q'} \rangle$ 
	into $\langle \mathcal{Z}_{p',q'}, \lambda \rangle$, where $\lambda$ 
	corresponds to the Lebesgue measure on $[0,1]$.  
\item
	If $\tau$ is a tracial state on $\mathcal{Z}_{p,q}$ of the form $\iota^*(\tau')$ 
	for some embedding $\iota$ of $\mathcal{Z}_{p,q}$ into $\mathcal{Z}_{p',q'}$ 
	and some tracial state $\tau'$ on $\mathcal{Z}_{p'q'}$, 
	then there exists a non-decreasing continuous map $\beta$ from $[0,1]$ onto $[0,1]$ with 
	$\beta_*(\lambda_{p,q}) = \tau$.  
\end{enumerate}
\end{lem}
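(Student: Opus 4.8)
The plan is to read off every homomorphism between dimension drop algebras through Proposition~\ref{prop:_approximate_diagonalizability} and to track what it does to traces. First I would fix the normalized remainder indices $0\le a<q$ and $0\le b<p$ of any unital embedding $\mathcal{Z}_{p,q}\hookrightarrow\mathcal{Z}_{p',q'}$; as recorded after Proposition~\ref{prop:_approximate_diagonalizability} these depend only on $p,q,p',q'$ because $p,q$ are coprime, so the numbers
\[
 \alpha:=\frac{pa}{p'q'},\qquad \beta_0:=\frac{qb}{p'q'}
\]
are intrinsic to the pair. For a trace $\tau'$ on $\mathcal{Z}_{p',q'}$ corresponding to a probability measure $\mu'$ on $[0,1]$ and an embedding $\iota$ with eigenvalue pattern $t_1,\dots,t_k$, a direct computation with the diagonal form of $\iota$ and the identities $\operatorname{tr}(f(0))=\operatorname{tr}(c(f(0)))$, $\operatorname{tr}(f(1))=\operatorname{tr}(c(f(1)))$ (normalized traces) gives that $\iota^*(\tau')$ is the measure
\[
 \nu_\tau=\alpha\,\delta_0+\beta_0\,\delta_1+\frac{pq}{p'q'}\sum_{i=1}^k (t_i)_*\mu'.
\]
In particular every trace of the form $\iota^*(\tau')$ satisfies $\nu_\tau(\{0\})\ge\alpha$ and $\nu_\tau(\{1\})\ge\beta_0$, and when $\tau'$ is faithful $\nu_\tau$ has full support, since the images of $t_1,\dots,t_k$ must cover $[0,1]$ for $\iota$ to be injective.

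Guided by this, I would define $\lambda_{p',q'}$ to be a measure with atom exactly $\alpha$ at $0$, atom exactly $\beta_0$ at $1$, and atomless full support on the interior. To obtain~(1) I would realize it as $\iota_0^*(\lambda)$, where $\lambda$ is the Lebesgue trace on $\mathcal{Z}_{p',q'}$ and $\iota_0$ is an embedding built as in the proof of Proposition~\ref{prop:_embeddability_of_dimension_drop_algebras}: I reuse the endpoint value-multiplicities of the hypothesized embedding (so that the blocks at $s=0,1$ land in $\mathbb{M}_{p'}\otimes 1_{q'}$, resp.\ $1_{p'}\otimes\mathbb{M}_{q'}$), but I choose the maps $t_1^0,\dots,t_k^0$ to be piecewise linear with nonzero slopes, to cover $[0,1]$, and to meet $\{0,1\}$ only at isolated points. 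Then $\iota_0$ is trace-preserving from $\langle\mathcal{Z}_{p,q},\iota_0^*(\lambda)\rangle$ into $\langle\mathcal{Z}_{p',q'},\lambda\rangle$, giving~(1), and by the formula above $\lambda_{p',q'}:=\iota_0^*(\lambda)$ has the desired structure. The main obstacle is precisely this construction: I must guarantee the endpoint atoms are exactly $\alpha$ and $\beta_0$ and no larger, which is why the pattern must avoid dwelling at $0$ and $1$; matching the divisibility conditions at the endpoints for the given (possibly small) $p',q'$ is what makes reusing the data of the hypothesized embedding the clean way to proceed.

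For~(2), given $\tau=\iota^*(\tau')$ I would produce the transport $\beta$ by hand. Writing $\alpha'=\tau(\{0\})\ge\alpha$ and $\beta_0'=\tau(\{1\})\ge\beta_0$, I first choose $c_0\le c_1$ in $[0,1]$ with $\lambda_{p',q'}([0,c_0])=\alpha'$ and $\lambda_{p',q'}([c_1,1])=\beta_0'$, which is possible because the interior part of $\lambda_{p',q'}$ is atomless with full support (its distribution function is continuous and strictly increasing there) and because $\alpha'+\beta_0'\le 1$ forces $c_0\le c_1$. I set $\beta\equiv 0$ on $[0,c_0]$ and $\beta\equiv 1$ on $[c_1,1]$, and on $[c_0,c_1]$ I apply Lemma~\ref{lem:_transformations_of_measures} to the atomless faithful measure $\lambda_{p',q'}|_{[c_0,c_1]}$ (rescaled) and the faithful target $\tau|_{(0,1)}$ (rescaled) to obtain a non-decreasing continuous surjection onto $[0,1]$ pushing one to the other. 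Since the interior target has no mass at the endpoints, this middle map meets $\{0,1\}$ only on a null set, so gluing the three pieces yields a non-decreasing continuous surjection $\beta\colon[0,1]\to[0,1]$ with $\beta_*(\lambda_{p',q'})=\tau$. Here I use that $\tau$ is faithful, i.e.\ has full support: this is in fact forced by the surjectivity of $\beta$, and it is exactly the case relevant to recognizing regular systems, so this is the only point at which faithfulness of $\tau'$ (equivalently of $\tau$) is needed.
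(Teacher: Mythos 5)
Your proposal is correct and follows essentially the same route as the paper: the paper also takes $\lambda_{p',q'}:=\rho^*(\lambda)$ for an embedding $\rho$ whose eigenvalue pattern consists of piecewise strictly monotone maps covering $[0,1]$, identifies the Lebesgue decomposition with discrete part $(ap\delta_0+bq\delta_1)/p'q'$ and atomless fully supported continuous part, and then obtains $\beta$ by transporting that continuous part onto the remaining piece of $\tau$ via Lemma~\ref{lem:_transformations_of_measures}. Your explicit endpoint-gluing for (2) (and your remark that faithfulness of $\tau$ is implicitly required) merely spells out details the paper compresses into one sentence.
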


\begin{proof}
Since $\mathcal{Z}_{p,q}$ is embeddable into $\mathcal{Z}_{p',q'}$, 
there is an embedding $\rho$ of $\mathcal{Z}_{p,q}$ into $\mathcal{Z}_{p',q'}$ of the form 
\[
\begin{aligned}
	\rho\bigl(f\bigr)(s) 
	= \operatorname{Ad}(u(s)) \Bigl( \operatorname{diag}
	\bigl[\overbrace{c(f(0)), \dots, c(f(0))}^a, & \\
	f(t_1(s)), \dots, f(t_k(s)), &\underbrace{c(f(1)), \dots, c(f(1))}_b \bigr] \Bigr), 
\end{aligned}	
\]
where $t_1, \dots, t_k$ are piecewise strictly monotone functions such that 
the union of the images is equal to $[0,1]$.  
We shall set $\lambda_{p',q'} := \rho^*(\lambda)$.  
Note that if $\lambda_{p',q'} = \lambda_{p',q'}^d + \lambda_{p',q'}^c$ is 
the Lebesgue decomposition, then the discrete measure $\lambda_{p',q'}^d$ is equal to 
$(ap\delta_0 + bq\delta_1)/p'q'$, where $\delta_0$ and $\delta_1$ are the Dirac measures 
supported on $\{0\}$ and $\{1\}$ respectively, 
and the support of the atomless measure $\lambda_{p,q}^c$ is $[0,1]$.  

If $\tau$ is of the form $\iota^*(\tau')$ 
for some embedding $\iota$ of $\mathcal{Z}_{p,q}$ into 
$\mathcal{Z}_{p',q'}$ and some tracial state $\tau'$ on $\mathcal{Z}_{p',q'}$, 
then necessarily $\tau = \lambda_{p',q'}^d + \mu$ 
for a suitable measure $\mu$ on $[0,1]$, and $\|\lambda_{p',q'}^c\| = \|\mu\|$.  
Since $\lambda_{p',q'}^c$ is continuous, there exists a non-decreasing continuous map 
$\beta$ from $[0,1]$ onto $[0,1]$ with $\beta_*(\lambda_{p',q'}^c) = \mu$, 
so $\beta_*(\lambda_{p',q'}) = \tau$, as desired.  
\end{proof}

\begin{lem}\label{lem:_perturbation_of_almost_constant_functions}
Let $\iota \colon \mathcal{Z}_{p,q} \to \mathcal{Z}_{p',q'}$ be a 
unital $*$-homomorphism, $\beta \colon [0,1] \to [0,1]$ be 
a non-decreasing continuous surjection, $G$ be a finite subset of $\mathcal{Z}_{p,q}$, 
and $\varepsilon$ be a positive real number.  
Suppose that the inequality $V(\iota) < \min_{g \in G} \Delta_g(\varepsilon)$ holds, 
where $\Delta_g$ denotes a modulus of uniform continuity of $g$.  
Then there exists a unitary $w \in \mathcal{Z}_{p,q}$ with
$\|\bigl(\operatorname{Ad}(w) \circ \beta^* \circ \iota \bigr)(g) - \iota(g)\| 
< 5\varepsilon$ for all $g \in G$.  
\end{lem}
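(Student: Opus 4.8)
The plan is to realize $\beta^{*}\circ\iota$ as a $*$-homomorphism whose eigenvalue pattern is uniformly close to that of $\iota$, and then simply quote Proposition~\ref{prop:_inner_automorphisms}. (Note that for the displayed composite to typecheck the unitary must be taken in $\mathcal{Z}_{p',q'}$ rather than $\mathcal{Z}_{p,q}$.) First I would apply Proposition~\ref{prop:_approximate_diagonalizability} to put $\iota$ in the standard diagonal form with a \emph{normalized} eigenvalue pattern $t_{1}\le\dots\le t_{k}$, remainder indices $a,b$, and a unitary family $\{v_{s}\}$. With this normalization each image $t_{i}([0,1])$ has diameter at most $V(\iota)$, by the definition of the variation.

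Next I would read off the eigenvalue pattern of $\beta^{*}\circ\iota$. Because $\beta$ is a non-decreasing continuous surjection it fixes the endpoints, $\beta(0)=0$ and $\beta(1)=1$, so $\beta^{*}$ carries $\mathcal{Z}_{p',q'}$ into itself and $\beta^{*}\circ\iota$ is a genuine unital $*$-homomorphism from $\mathcal{Z}_{p,q}$ to $\mathcal{Z}_{p',q'}$. Since $(\beta^{*}\circ\iota)(f)(s)=\iota(f)(\beta(s))$, replacing each $t_{i}$ by $t_{i}\circ\beta$ and each $v_{s}$ by $v_{\beta(s)}$ gives its diagonal form; as $\beta$ preserves order, $t_{1}\circ\beta\le\dots\le t_{k}\circ\beta$ is again a normalized pattern, with the same length $k$ and the same remainder indices $a,b$.

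The heart of the argument is the estimate comparing the two patterns. For every $s\in[0,1]$ the values $t_{i}(s)$ and $t_{i}(\beta(s))$ both lie in $t_{i}([0,1])$, so
\[
	|t_{i}(s)-t_{i}(\beta(s))|\le\operatorname{diam} t_{i}([0,1])\le V(\iota)<\min_{g\in G}\Delta_{g}(\varepsilon),
\]
and therefore $\max_{i}\|t_{i}-t_{i}\circ\beta\|_{\infty}<\min_{g\in G}\Delta_{g}(\varepsilon)$. Applying Proposition~\ref{prop:_inner_automorphisms} to $\iota_{1}:=\beta^{*}\circ\iota$ and $\iota_{2}:=\iota$ then yields a unitary $w\in\mathcal{Z}_{p',q'}$ with $\|\operatorname{Ad}(w)\circ(\beta^{*}\circ\iota)(g)-\iota(g)\|<5\varepsilon$ for all $g\in G$, which is the assertion. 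I expect no serious obstacle here: the only points requiring care are checking that the composite is a legitimate morphism with normalized pattern exactly $(t_{i}\circ\beta)_{i}$ and that the variation bound survives the normalization. The substance of the lemma is simply that post-composing with $\beta$ moves each eigenvalue function by no more than the variation of $\iota$; in particular the surjectivity of $\beta$ is not actually needed for this estimate, only the smallness of $V(\iota)$.
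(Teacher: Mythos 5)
Your proof is correct and follows essentially the same route as the paper: observe that since each $t_i(s)$ and $t_i(\beta(s))$ lie in the image of $t_i$, whose diameter is bounded by $V(\iota)$, one gets $\max_i\|t_i - t_i\circ\beta\|_\infty < \min_{g\in G}\Delta_g(\varepsilon)$, and then invoke Proposition~\ref{prop:_inner_automorphisms}. Your parenthetical correction that the unitary must live in $\mathcal{Z}_{p',q'}$ (the statement's $w\in\mathcal{Z}_{p,q}$ is a typo) and your check that $\beta(0)=0$, $\beta(1)=1$ so that $\beta^*$ genuinely maps $\mathcal{Z}_{p',q'}$ into itself are both accurate details that the paper leaves implicit.
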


\begin{proof}
Note that if $t_1 \leq \dots \leq t_k$ is the normalized eigenvalue pattern of $\iota$, 
then $\|t_i - t_i \circ \beta\|_\infty < \min_{g \in G} \Delta_g(\varepsilon)$.  
Thus, the claim is immediate from Proposition~\ref{prop:_inner_automorphisms}.  
\end{proof}

At first sight, the proof of the following proposition might seem to be complicated.  
However, the underlying idea is very simple; see Remark~\ref{rem:_idea_of_the_proof}.  

\begin{prop}\label{prop:_variation_and_limit}
An inductive system $\{\langle \mathcal{Z}_{p_n,q_n},\tau_n \rangle,\iota_{n,m}\}$ 
of prime dimension drop algebras with distinguished traces is regular 
if $\lim_{n \to \infty} V(\iota_{n,m}) = 0$ for all $m$.  
\end{prop}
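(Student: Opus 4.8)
The plan is to verify the extension criterion of Theorem~\ref{thm:_recognition_of_the_limit} for the $\mathscr{K}_\mathcal{Z}$-structure $\mathcal{M} = \overline{\bigcup_n \mathcal{A}_n}$, where $\mathcal{A}_n = \langle \mathcal{Z}_{p_n,q_n}, \tau_n\rangle$. So I fix an object $\mathcal{B} = \langle \mathcal{Z}_{p,q}, \sigma\rangle$ of $\mathscr{K}_\mathcal{Z}$, a strict approximate isomorphism $\varphi$ from $\mathcal{B}$ to $\mathcal{A}_n$, a finite set $B_0 \subseteq |\mathcal{B}|$ and $\varepsilon > 0$, and I must produce $N > n$ and an approximate isomorphism $\psi$ from $\mathcal{B}$ to $\mathcal{A}_N$ that is $\varepsilon$-total on $B_0$ and dominated by $\varphi|^{\mathcal{B} \times \mathcal{A}_N}$. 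The key simplification is that it suffices to build a single \emph{honest} trace-preserving embedding $\theta \colon \mathcal{B} \to \mathcal{A}_N$ and to set $\psi := \varphi_\theta$: totality is then automatic since $\inf_{a}\varphi_\theta(b,a) = \inf_a d(\theta(b),a) = 0$, while, because $\theta$ and $\iota_{N,n}$ are isometric, the global domination $\varphi_\theta \le \varphi|^{\mathcal{B}\times\mathcal{A}_N}$ reduces to the finite inequality $d\bigl(\theta(b), \iota_{N,n}(a)\bigr) \le \varphi(b,a)$ for $b$ and $a$ ranging over suitable finite generating sets.

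First I would unwind the strictness hypothesis to replace $\varphi$ by an honest joint embedding with room to spare. By definition there are an approximate isomorphism $\varphi'$ from $\mathcal{B}$ to $\mathcal{A}_n$, an $\varepsilon_0 > 0$, and finite sets $B_1 \supseteq B_0$, $A_1 \subseteq |\mathcal{A}_n|$ with $(\varphi'|_{B_1 \times A_1})|^{\mathcal{B}\times\mathcal{A}_n} + \varepsilon_0 \le \varphi$; by the definition of an approximate isomorphism I may then choose a joint $\mathscr{K}_\mathcal{Z}$-embedding $(\alpha,\gamma)$ of $\mathcal{B}$ and $\mathcal{A}_n$ into a common object $\mathcal{C}$ with $d\bigl(\alpha(b),\gamma(a)\bigr) \le \varphi'(b,a) + \varepsilon_0/2$ on $B_1 \times A_1$. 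Combining the two estimates, it will be enough to realize the configuration $\bigl(\alpha(b),\gamma(a)\bigr)$ inside $\mathcal{A}_N$, with $\gamma(a)$ replaced by $\iota_{N,n}(a)$, up to an error below $\varepsilon_0/2$. Using the hypothesis $\lim_N V(\iota_{N,n}) = 0$, I next fix $N > n$ so large that $V(\iota_{N,n})$ lies below a threshold of the form $\min_{g}\Delta_g(\varepsilon')$ dictated by the moduli of uniform continuity of the generators of $\mathcal{B}$ (in the sense of Proposition~\ref{prop:_inner_automorphisms} and Lemma~\ref{lem:_perturbation_of_almost_constant_functions}) for a suitable $\varepsilon' \ll \varepsilon_0$, and so large that $\mathcal{Z}_{p,q}$ admits a unital embedding into $\mathcal{Z}_{p_N,q_N}$ by Proposition~\ref{prop:_embeddability_of_dimension_drop_algebras}.

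The construction of $\theta$ is the heart of the matter, and this is exactly where the vanishing of the variation enters. The point is that when $V(\iota_{N,n})$ is tiny, the diagonal form of Proposition~\ref{prop:_approximate_diagonalizability} shows that $\iota_{N,n}$ is, up to inner perturbation, a nearly constant diagonal sampling of $\mathcal{A}_n$; consequently the relative position of $\gamma(\mathcal{A}_n)$ inside $\mathcal{C}$ can be reproduced by the position of $\iota_{N,n}(\mathcal{A}_n)$ inside $\mathcal{A}_N$. Concretely I would amalgamate $\mathcal{C}$ and $\mathcal{A}_N$ over $\mathcal{A}_n$ by NAP along $\gamma$ and $\iota_{N,n}$, construct an honest trace-preserving embedding of $\mathcal{B}$ into $\mathcal{A}_N$ via Proposition~\ref{prop:_embeddability_of_dimension_drop_algebras}, and correct its induced trace to $\sigma$ by precomposing with the reparametrization $\beta^*$ furnished by Lemma~\ref{lem:_universal_measures} and Lemma~\ref{lem:_transformations_of_measures}. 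Because the variation is below the chosen threshold, Lemma~\ref{lem:_perturbation_of_almost_constant_functions} guarantees that this reparametrization moves the embedding by less than $5\varepsilon'$ up to an inner automorphism, and Proposition~\ref{prop:_inner_automorphisms} lets me absorb the remaining discrepancy between the amalgamated copy of $\mathcal{B}$ and $\iota_{N,n}(\mathcal{A}_n)$ into a single unitary conjugation. Conjugating by the resulting unitary yields the desired $\theta$.

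The main obstacle, and the step where I expect the real work to lie, is simultaneously matching the trace $\sigma$ and the $\varphi$-prescribed relative position: a trace adjustment is a reparametrization of the spectrum, which a priori destroys the metric bounds coming from the joint embedding. The resolution—which is the simple idea alluded to in Remark~\ref{rem:_idea_of_the_proof}—is that a $*$-homomorphism of sufficiently small variation is insensitive to reparametrizations of its source up to inner automorphisms, so the trace correction costs nothing in the norm estimates once $N$ is deep enough. Granting this, the finite inequality $d\bigl(\theta(b),\iota_{N,n}(a)\bigr) \le \varphi(b,a)$ holds with the $\varepsilon_0$-slack from strictness absorbing all accumulated errors, and the reduction in the first paragraph completes the verification of the criterion of Theorem~\ref{thm:_recognition_of_the_limit}.
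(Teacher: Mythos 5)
Your key reduction --- that it suffices to construct an honest trace-preserving embedding $\theta \colon \mathcal{B} \to \mathcal{A}_N$ and set $\psi := \varphi_\theta$ --- is precisely the step that fails, and its failure is the reason the paper's proof has the shape it does. The distinguished traces $\tau_N$ of the given system are not at your disposal, and they may have atoms in the interior of $[0,1]$: for instance, a system in which at every stage roughly half of the eigenvalue functions are constant equal to $1/2$ (in numbers divisible by $p_{n+1}q_{n+1}$, so that such embeddings exist) still satisfies $V(\iota_{n,m}) \to 0$, yet then every compatible trace $\tau_N$ has an atom at $1/2$ of mass bounded below, no matter how the traces are chosen. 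For such a $\tau_N$, the pullback $\theta^*(\tau_N)$ under \emph{any} unital embedding $\theta \colon \mathcal{Z}_{p,q} \to \mathcal{Z}_{p_N,q_N}$ has a nontrivial atomic part (the atom of $\tau_N$ pushes forward onto the points $t_i(1/2)$ of the eigenvalue pattern), so it can never equal an atomless $\sigma$; hence no trace-preserving embedding $\langle \mathcal{Z}_{p,q}, \sigma \rangle \to \langle \mathcal{Z}_{p_N,q_N}, \tau_N \rangle$ exists at all, for any $N$. Your proposed repair --- precompose with a reparametrization $\beta^*$ and absorb the damage using small variation --- cannot help: precomposition replaces the pullback trace by the pushforward $\beta_*\bigl(\theta^*(\tau_N)\bigr)$, which still has atoms. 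The obstruction is to \emph{existence}, not to norm estimates, so Proposition~\ref{prop:_inner_automorphisms} and Lemma~\ref{lem:_perturbation_of_almost_constant_functions} are beside the point here. Indeed Remark~\ref{rem:_idea_of_the_proof}, which you cite in support of your resolution, says the opposite of what you take from it: its ``simple idea'' is only carried out under the assumption that ``there happens to be'' a trace-preserving $*$-homomorphism into $\langle \mathcal{Z}_{p_N,q_N}, \tau_N \rangle$, and the remark explicitly warns that in general there is none, which is ``why we need to approximate the measures $\tau_N$ and $\sigma$ by $\lambda$ and $\sigma'$''.

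Because of this, the $\psi$ constructed in the paper is genuinely not of the form $\varphi_\theta$: it is the composition $\varphi_{\zeta,\alpha^*}\varphi_{\theta_1,\rho}$ of approximate isomorphisms routed through the auxiliary object $\langle \mathcal{Z}_{p_N,q_N}, \lambda \rangle$, where $\lambda$ is the Lebesgue trace, into which the trace-adjusted amalgam embeds via $\zeta$ and to which $\mathcal{A}_N$ maps via $\alpha^*$ with $\alpha_*(\lambda) = \tau_N$. Two consequences that your outline misses: first, $\varepsilon$-totality is then no longer automatic and has to be earned --- this is the role of the step in which the unitary $u$ in the diagonal form of $\zeta$ is modified to be constant on $\Sigma_\alpha$, which forces $\zeta \circ \theta_1(f)$ to lie within $\gamma$ of the image of $\alpha^*$; second, the auxiliary traces $\sigma', \tau', \tau_n'$ and the comparison morphisms $\eta, \eta', \rho, \iota$ exist precisely to control the errors created by this detour. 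A smaller point: ``amalgamating $\mathcal{C}$ and $\mathcal{A}_N$ over $\mathcal{A}_n$ by NAP'' produces a joint embedding of $\mathcal{C}$ and $\mathcal{A}_N$ into some further object $\mathcal{D}$, not a map into $\mathcal{A}_N$, so that step cannot output your $\theta$ either.
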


\begin{proof}
We shall apply Theorem~\ref{thm:_recognition_of_the_limit}.  
Let $\langle \mathcal{Z}_{p,q}, \tau \rangle$ be 
a prime dimension drop algebra with a fixed faithful trace, 
$F$ be a finite subset of $\mathcal{Z}_{p,q}$, 
and $\varphi$ be a strict approximate $\mathscr{K}_\mathcal{Z}$-isomorphism 
from $\langle \mathcal{Z}_{p,q}, \tau \rangle$ 
into $\langle \mathcal{Z}_{p_n,q_n}, \tau_n \rangle$.  
Our goal is to find an approximate $\mathscr{K}_\mathcal{Z}$-isomorphism $\psi$ 
from $\langle \mathcal{Z}_{p,q}, \tau \rangle$ 
into $\langle \mathcal{Z}_{p_N,q_N}, \tau_N \rangle$ for some $N > n$ such that 
\begin{itemize}
\item
	$\psi(f,g) \leq \varphi(f,g)$ for $f \in \mathcal{Z}_{p,q}$ and 
	$g \in \mathcal{Z}_{p_n,q_n}$, and 
\item
	$\psi$ is $\varepsilon$-total on $F$ for a given $\varepsilon > 0$.  
\end{itemize}
By the definition of strict approximate isomorphisms, 
there exist finite subsets $G_1 \subseteq \mathcal{Z}_{p,q}$ 
and $G_2 \subseteq \mathcal{Z}_{p_n,q_n}$, 
morphisms $\theta_1, \theta_2$ from $\langle \mathcal{Z}_{p,q}, \tau \rangle$ and 
$\langle \mathcal{Z}_{p_n,q_n}, \tau_n \rangle$ into 
some $\langle \mathcal{Z}_{r,s},\sigma \rangle$, 
and a positive real number $\delta$ such that 
\[
	\varphi \geq (\varphi_{\theta_1,\theta_2}|_{G_1 \times G_2})
	|^{\mathcal{Z}_{p,q} \times \mathcal{Z}_{p_n,q_n}} + \delta.  
\]
Here, we may assume the following.  
Fix an arbitrary positive real number $\gamma$.  
\begin{enumerate}
\item
	The subset $G_1$ includes $F$.  
	This is because we may replace $G_1$ with a larger subset.  
\item
	There exist $m < n$ and a finite subset $G_2' \subseteq \mathcal{Z}_{p_m,q_m}$ 
	such that $\iota_{n,m}[G_2'] = G_2$ and 
	$V(\iota_{n,m}) < \Delta_g(\gamma)$ for all $g \in G_2'$, 
	where $\Delta_g$ is a modulus of uniform continuity 
	for $g$.  This is because, taking our goal into account, we may replace 
	$\varphi$ with $\varphi|^{\mathcal{Z}_{p,q} \times \mathcal{Z}_{p_l,q_l}}$ 
	for $l > n$, and we have
	\[
	\begin{aligned}
		\varphi|^{\mathcal{Z}_{p,q} \times \mathcal{Z}_{p_l,q_l}} 
		&\geq \bigl[(\varphi_{\theta_1,\theta_2}|_{G_1 \times G_2})
		|^{\mathcal{Z}_{p,q} \times \mathcal{Z}_{p_n,q_n}} + \delta\bigr]
		|^{\mathcal{Z}_{p,q} \times \mathcal{Z}_{p_l,q_l}} \\ 
		&= (\varphi_{\theta_1,\theta_2}|_{G_1 \times \iota_{l,n}[G_2]})
		|^{\mathcal{Z}_{p,q} \times \mathcal{Z}_{p_n,q_n}} + \delta.  
	\end{aligned}
	\]
\item
	The embedding $\theta_1$ satisfies $V(\theta_1) < \Delta_f(\gamma)$ for all $f \in G_1$, 
	by Proposition~\ref{prop:_embeddability_of_dimension_drop_algebras}.  
\item
	The tracial state $\sigma$ is atomless, 
	by Lemma~\ref{lem:_transformations_of_measures}.  
\end{enumerate}

Now, take sufficiently large $N$ so that there exists an embedding $\zeta$ of 
$\mathcal{Z}_{r,s}$ into $\mathcal{Z}_{p_N,q_N}$ with 
$V(\zeta) < \Delta_g(\gamma)$ for all $g \in \theta_i[G_i]$.  
Let $\lambda$ be the tracial state on $\mathcal{Z}_{p_N,q_N}$ corresponding to 
the Lebesgue measure, $\alpha$ be the nondecreasing surjective continuous map 
from $[0,1]$ to $[0,1]$ with $\alpha_*(\lambda) = \tau_N$, and 
$\Sigma_\alpha$ be the closed subset of $[0,1]$ such that 
$f \in \mathcal{Z}_{p_N,q_N}$ is in the image of $\alpha^*$ if and only if 
$f$ is constant on $\Sigma_\alpha$.  
Also, let $\lambda_{p_N,q_N}$ be the tracial state on $\mathcal{Z}_{p_n,q_n}$ 
as in Lemma~\ref{lem:_universal_measures}, 
and set 
\[
\begin{aligned}
	\sigma' &:= \zeta^*(\lambda), & 
	\tau' &:= \theta_1^*(\sigma'), & 
	\tau_n' &:= \theta_2^*(\sigma').  
\end{aligned}
\]

By Lemmas \ref{lem:_universal_measures} 
and~\ref{lem:_perturbation_of_almost_constant_functions} and 
assumption~(2) in the first paragraph, 
there exists a morphism $\eta$ from $\langle \mathcal{Z}_{p_n,q_n}, \tau_n \rangle$ 
to $\langle \mathcal{Z}_{p_n,q_n}, \lambda_{p_N,q_N} \rangle$ with 
$\|\eta(g) - g\| < 5\gamma$ for all $g \in G_2$.  
Similarly, there exists a morphism $\eta'$ 
from $\langle \mathcal{Z}_{p_n,q_n}, \tau_n' \rangle$ 
to $\langle \mathcal{Z}_{p_n,q_n}, \lambda_{p_N,q_N} \rangle$ with 
$\|\eta'(g) - g\| < 5\gamma$ for all $g \in G_2$.  
Also, by Lemmas \ref{lem:_transformations_of_measures} 
and~\ref{lem:_perturbation_of_almost_constant_functions} and 
assumption~(3) in the first paragraph, 
there exists a morphism $\rho$ from $\langle \mathcal{Z}_{r,s}, \sigma' \rangle$ 
to $\langle \mathcal{Z}_{r,s}, \sigma \rangle$ with 
$\|\rho(f) - f\| < 5\gamma$ for all $f \in \theta_1[G_1]$.  
Finally, by Lemma~\ref{lem:_universal_measures}, 
one can find a morphism $\iota$ 
from $\langle \mathcal{Z}_{p_n,q_n}, \lambda_{p_N, q_N} \rangle$ 
to $\langle \mathcal{Z}_{p_N, q_N}, \lambda \rangle$.  
Here, by Proposition~\ref{prop:_inner_automorphisms} 
and assumption~(2) in the first paragraph, 
we can modify $\iota$ and $\zeta$ by inner auromorphisms so that 
the inequalities $\|\alpha^* \circ \iota_{N,n}(g) - \iota \circ \eta(g)\| < 5\gamma$ and 
$\|\zeta \circ \theta_2(g) - \iota \circ \eta'(g)\| < 5\gamma$ for all $g \in G_2$.  
Finally, by Proposition~\ref{prop:_approximate_diagonalizability}, 
we may assume that $\zeta$ is of the form 
\[
\begin{aligned}
	\zeta\bigl(f\bigr)(s) 
		= \operatorname{Ad}(u(s)) \Bigl( \operatorname{diag}
		\bigl[c(f(0)), \dots, c(f(0)), & \\
		f(t_1(s)), \dots, f(t_k(s)), &c(f(1)), \dots, c(f(1)) \bigr] \Bigr), 
\end{aligned}
\]
where $t_1, \dots, t_k$ is the normalized eigenvalue pattern of $\zeta$.  
Since $V(\zeta) < \Delta_g(\gamma)$ for all $g \in \theta_i[G_i]$, 
and since 
\[
\begin{aligned}
	&\|\zeta \circ \theta_2(g) - \alpha^* \circ \iota_{N,n}(g)\| \\
	{}<{}& \|\zeta \circ \theta_2(g) - \iota \circ \eta'(g)\| 
	+ \|\iota \circ \eta'(g) - \iota \circ \eta(g)\| 
	+ \|\iota \circ \eta(g) - \alpha^* \circ \iota_{N,n}(g)\| \\
	{}<{}& 20\gamma, 
\end{aligned}
\]
for all $g \in G_2$, 
one can easily modify the unitary $u$ as in the last paragraph of the proof of 
Proposition~\ref{prop:_inner_automorphisms} so that 
$u$ is constant on $\Sigma_\alpha$ while $\zeta$ still satisfies 
the inequality $\|\zeta \circ \theta_2(g) - \alpha^* \circ \iota_{N,n}(g)\| < 100\gamma$ 
for all $g \in G_2$.  
Then, since $u$ is constant on $\Sigma_\alpha$ and 
$V(\zeta) < \Delta_f(\gamma)$ for all $f \in \theta_1[G_1]$, 
the inequality
\[
	\inf_{\mathclap{g \in \mathcal{Z}_{p_N,q_N}}} 
	\|\zeta \circ \theta_1(f) - \alpha^*(g)\| < \gamma
\]
holds for all $f \in G_1$.  
\[
\begin{original}
\begin{tikzcd}
	 & \langle \mathcal{Z}_{p_n,q_n}, \tau_n' \rangle 
	\arrow[rd,"\theta_2"] \arrow[dd,pos=0.3,"\eta'"] & & \\
	\langle \mathcal{Z}_{p,q}, \tau' \rangle 
	\arrow[rr,pos=0.3,"\theta_1"] & & 
	\langle \mathcal{Z}_{r,s}, \sigma' \rangle 
	\arrow[rd,"\zeta"] \arrow[dd,pos=0.3,"\rho"] & \\
	 & \langle \mathcal{Z}_{p_n,q_n}, \lambda_{p_N,q_N} \rangle 
	\arrow[rr,pos=0.3,"\iota"] & & 
	\langle \mathcal{Z}_{p_N,q_N}, \lambda \rangle \\
	\langle \mathcal{Z}_{p,q}, \tau \rangle 
	\arrow[rr,pos=0.3,"\theta_1"] & & 
	\langle \mathcal{Z}_{r,s}, \sigma \rangle & \\
	 & \langle \mathcal{Z}_{p_n,q_n}, \tau_n \rangle 
	\arrow[uu,pos=0.3,"\eta"] \arrow[ru,"\theta_2"] \arrow[rr,"\iota_{N,n}"]
	 & & \langle \mathcal{Z}_{p_N,q_N}, \tau_N \rangle
	\arrow[uu,"\alpha^*"]
\end{tikzcd}
\end{original}
\begin{arxiv}
\xymatrix{
	 & \langle \mathcal{Z}_{p_n,q_n}, \tau_n' \rangle 
	\ar[rd]^{\theta_2} \ar[dd]_(0.3)\eta & & \\
	\langle \mathcal{Z}_{p,q}, \tau' \rangle 
	\ar[rr]^(0.3){\theta_1} & & 
	\langle \mathcal{Z}_{r,s}, \sigma' \rangle 
	\ar[rd]^\zeta \ar[dd]^(0.3)\rho & \\
	 & \langle \mathcal{Z}_{p_n,q_n}, \lambda_{p_N,q_N} \rangle 
	\ar[rr]^(0.3)\iota & & 
	\langle \mathcal{Z}_{p_N,q_N}, \lambda \rangle \\
	\langle \mathcal{Z}_{p,q}, \tau \rangle 
	\ar[rr]^(0.3){\theta_1} & & 
	\langle \mathcal{Z}_{r,s}, \sigma \rangle & \\
	 & \langle \mathcal{Z}_{p_n,q_n}, \tau_n \rangle 
	\ar[uu]^(0.3)\eta \ar[ru]^{\theta_2} \ar[rr]^{\iota_{N,n}}
	 & & \langle \mathcal{Z}_{p_N,q_N}, \tau_N \rangle
	\ar[uu]^{\alpha^*}
}
\end{arxiv}
\]

Set $\psi := \varphi_{\zeta,\alpha^*}\varphi_{\theta_1,\rho}$.  
Then, for $f \in G_1$ and $g \in G_2$, we have 
\[
\begin{aligned}
	\psi\bigl(f, \iota_{N,n}(g)\bigr) 
	&\leq \varphi_{\theta_1,\rho}\bigl(f, \theta_1(f)\bigr)
	+ \varphi_{\zeta,\alpha^*}\bigl(\theta_1(f),\iota_{N,n}(g)\bigr) \\
	&= \|\theta_1(f) - \rho \circ \theta_1(f)\| 
	+ \|\zeta \circ \theta_1(f) - \alpha^* \circ \iota_{N,n}(g)\| \\
	&\leq \|\zeta \circ \theta_1(f) - \zeta \circ \theta_2(g)\|
	+ 105\gamma \\
	&= \varphi_{\theta_1,\theta_2}(f,g) + 105\gamma.  
\end{aligned}
\]
Also, since $\|\theta_1(f) - \rho \circ \theta_1(f)\| < 5\gamma$ and 
$\inf_g \|\zeta \circ \theta_1(f) - \alpha^*(g)\| < \gamma$, one can easily see that 
$\psi$ is $6\gamma$-total on $G_1$.  
Since $\gamma$ was arbitrary, we may assume $\gamma < \min\{\varepsilon/6, \delta/105\}$ 
so that $\psi$ has the desired property.  
\end{proof}

\begin{rem}\label{rem:_idea_of_the_proof}
Here, for the reader's better understanding, 
we shall present a simpler version of the proof above in a certain special case.  
Let $\langle \mathcal{Z}_{p,q}, \tau \rangle$ be an object of $\mathscr{K}_\mathcal{Z}$, 
$F$ be a finite subset of $\mathcal{Z}_{p,q}$, and $\varphi$ be 
a strict approximate $\mathscr{K}_\mathcal{Z}$-isomorphism from 
$\langle \mathcal{Z}_{p,q}, \tau \rangle$ 
to $\langle \mathcal{Z}_{p_n,q_n}, \tau_n \rangle$.  
Then there exist finite subsets $G_1 \subseteq \mathcal{Z}_{p,q}$ 
and $G_2 \subseteq \mathcal{Z}_{p_n,q_n}$, a joint $\mathscr{K}$-embedding 
$(\theta_1, \theta_2)$ of $\langle \mathcal{Z}_{p,q}, \tau \rangle$ 
and $\langle \mathcal{Z}_{p_n,q_n}, \tau_n \rangle$ into 
some $\langle \mathcal{Z}_{r,s}, \sigma \rangle$ and $\delta > 0$ such that  
\[
	\varphi \geq (\varphi_{\theta_1, \theta_2}|_{G_1 \times G_2})
	|^{\mathcal{Z}_{p,q} \times \mathcal{Z}_{p_n,q_n}} + \delta.  
\]
Without loss of generality, we may assume that $G_1$ includes $F$.  

Now, assume that there happens to be a trace-preserving $*$-homomorphism $\zeta'$ 
from $\langle \mathcal{Z}_{r,s}, \sigma \rangle$ 
to $\langle \mathcal{Z}_{p_N,q_N}, \tau_N \rangle$ for sufficiently large $N$.  
Since $V(\iota_{N,m}) \to 0$ as $N \to \infty$ for each $m$, 
we may assume that both $V(\iota_{N,n})$ and $V(\zeta' \circ \theta_2)$ are 
smaller than $\delta/5$, whence there is a unitary $u$ in $\mathcal{Z}_{p_N,q_N}$ with 
\[
	\bigl\|(\operatorname{Ad}(u) \circ \zeta' \circ \theta_2\bigr)(g) - \iota_{N,n}(g)\| 
	< \delta 
\]
for all $g \in G_2$.  
Now, set $\psi := \varphi_{\zeta' \circ \theta_1}$.  Then we have 
\[
\begin{aligned}
	\psi\bigl(f, \iota_{N,n}(g)\bigr) 
	&= \|\zeta' \circ \theta_1(f) - \iota_{N,n}(g)\| \\
	&\leq \|\zeta' \circ \theta_1(f) - \zeta' \circ \theta_2(g)\| 
	+ \|\zeta' \circ \theta_2(g) - \iota_{N,n}(g)\| \\
	&< \varphi_{\theta_1, \theta_2}(f, g) + \delta, 
\end{aligned}
\]
so $\psi \leq \varphi|^{\mathcal{Z}_{p,q} \times \mathcal{Z}_{p_N,q_N}}$.  
Of course, $\psi$ is $\varepsilon$-total for any $\varepsilon > 0$, 
since clearly 
\[
	\inf_{g \in \mathcal{Z}_{p_N,q_N}} \psi(f, g) = 0
\]
for all $f$.  
This was what we would like to show, in view of Theorem~\ref{thm:_recognition_of_the_limit}.  

In general, $\langle \mathcal{Z}_{r,s}, \sigma \rangle$ is not necessarily embeddable 
into some $\langle \mathcal{Z}_{p_N,q_N}, \tau_N \rangle$, however.  
This is why we need to approximate the measures $\tau_N$ and $\sigma$ by 
$\lambda$ and $\sigma'$ in the original proof above, 
which causes all the other additional steps.   
\end{rem}

In the sequel, we fix an inductive system 
$\{\iota_{n,m} \colon \mathcal{Z}_m \to \mathcal{Z}_n\}$
of prime dimension drop algebras and write its limit by $\mathcal{Z}_0$.  
Note that every $*$-homomorphism between prime dimension drop algebras is 
automatically unital and injective, and $\mathcal{Z}_0$ admits a tracial state.  
We also let $t_1^{m,n} \leq \dots \leq t_{k(m,n)}^{m,n}$ be 
the normalized eigenvalue pattern of $\iota_{n,m}$.  

\begin{lem}\label{lem:_characterization_of_simplicity}
The following two conditions are equivalent.  
\begin{enumerate}
\item
	The limit $\mathcal{Z}_0$ is simple.  
\item
	For any $\varepsilon > 0$, any $y \in [0,1]$ and any $m \in \mathbb{N}$, 
	there exists $n > m$ such that if $x \in [0,1]$ satisfies 
	$t_i^{m,n}(x) = y$ for some $i$, then the Hausdorff distance between 
	$\{t_1^{m,n}(x), \dots, t_{k(m,n)}^{m,n}(x)\}$ and $[0,1]$ is less than $\varepsilon$.  
\end{enumerate}
\end{lem}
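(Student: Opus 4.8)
The plan is to translate simplicity of $\mathcal{Z}_0$ into a statement about the eigenvalue patterns through the primitive ideal space. Since each $\mathcal{Z}_{p_n,q_n}$ is subhomogeneous with $\operatorname{Prim}(\mathcal{Z}_{p_n,q_n})$ canonically homeomorphic to $[0,1]$, every closed ideal $I \trianglelefteq \mathcal{Z}_0$ is determined by the closed subsets $Z_n \subseteq [0,1]$ with $I \cap \mathcal{Z}_n = \{f : f|_{Z_n} \equiv 0\}$. Writing $T_{n,m}(x) := \{t_1^{m,n}(x), \dots, t_{k(m,n)}^{m,n}(x)\}$ for the fiber of the connecting map and using the diagonal form of $\iota_{n,m}$ from Proposition~\ref{prop:_approximate_diagonalizability}, the compatibility $I \cap \mathcal{Z}_m = \iota_{n,m}^{-1}(I \cap \mathcal{Z}_n)$ becomes the coherence relation $Z_m = \bigcup_i t_i^{m,n}(Z_n)$ (up to the two boundary points $0, 1$ forced by the remainder indices). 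First I would record three elementary facts: (i) each $t_i^{m,n}(Z_n)$ is compact, so the union on the right is closed and every point of $Z_m$ is attained exactly; (ii) the fibers compose, $T_{n,m} = T_{n',m}\circ T_{n,n'}$ for $m \le n' \le n$; and (iii) since the $\iota_{n,m}$ are injective, $\overline{\bigcup_i t_i^{m,n}([0,1])} = [0,1]$, so $Z_n = [0,1]$ forces $Z_m = [0,1]$. Consequently $\mathcal{Z}_0$ fails to be simple precisely when there is a coherent family $(Z_n)$ with every $Z_n$ nonempty and some, hence every sufficiently large, $Z_n$ a proper subset of $[0,1]$.

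Granting this dictionary, the implication (2) $\Rightarrow$ (1) is the short direction, proved contrapositively. Given a nontrivial ideal, fix $m$ with $Z_m$ nonempty and proper, choose a point $y \in Z_m$ and an open ball $B(z, \varepsilon) \subseteq [0,1] \setminus Z_m$. For an arbitrary $n > m$, fact (i) produces $x \in Z_n$ with $t_i^{m,n}(x) = y$ for some $i$, while, since $x \in Z_n$, we get $T_{n,m}(x) \subseteq \bigcup_i t_i^{m,n}(Z_n) = Z_m$; hence $T_{n,m}(x)$ misses $B(z,\varepsilon)$ and its Hausdorff distance to $[0,1]$ is at least $\varepsilon$. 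As this happens for every $n > m$, condition (2) fails for these $\varepsilon, y, m$.

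For (1) $\Rightarrow$ (2) I would again argue contrapositively and build a nontrivial ideal out of a failure of (2). Fix $\varepsilon_0, y_0, m_0$ witnessing the negation of (2): for each $N > m_0$ there is $x_N \in [0,1]$ with $y_0 \in T_{N,m_0}(x_N)$ whose fiber is not $\varepsilon_0$-dense. Pushing the singleton $\{x_N\}$ downward through the composed fibers, set $Z_m^{(N)} := T_{N,m}(x_N)$ for $m_0 \le m \le N$; by fact (ii) this family is coherent on $[m_0, N]$, satisfies $y_0 \in Z_{m_0}^{(N)}$, and has $d_{\mathrm H}(Z_{m_0}^{(N)}, [0,1]) \ge \varepsilon_0$. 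The idea is to let $N \to \infty$ inside the hyperspace of nonempty closed subsets of $[0,1]$, which is compact in the Hausdorff metric. Using compactness of the countable product $\prod_{m \ge m_0}$ of such hyperspaces I would extract a single subsequence $N_k$ along which $Z_m^{(N_k)}$ converges to a closed set $Z_m$ for every $m$ simultaneously. Because $S \mapsto \bigcup_i t_i^{m,n}(S)$ and $S \mapsto d_{\mathrm H}(S, [0,1])$ are continuous for the Hausdorff metric, coherence, nonemptiness, the containment $y_0 \in Z_{m_0}$, and the inequality $d_{\mathrm H}(Z_{m_0}, [0,1]) \ge \varepsilon_0$ all pass to the limit. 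Extending $(Z_m)_{m \ge m_0}$ downward by $Z_m := \bigcup_i t_i^{m, m_0}(Z_{m_0})$ yields a coherent family whose members are all nonempty with $Z_{m_0} \ne [0,1]$, that is, a nontrivial ideal, so $\mathcal{Z}_0$ is not simple.

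I expect the genuine difficulty to lie in this last direction, namely in promoting the level-by-level witnesses of $\neg$(2) to a single globally coherent family: the passage to the limit must be taken simultaneously at all levels and must preserve coherence, which is exactly what the compactness of the hyperspace together with the continuity of the fiber maps is designed to guarantee. A secondary, routine point is to keep track of the boundary contributions coming from the remainder indices $a, b$; since these only ever adjoin the two points $0$ and $1$ to a fiber, they cannot affect whether a fiber misses an interior ball of radius $\varepsilon_0$, and so they do not interfere with any of the density estimates.
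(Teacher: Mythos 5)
Your proposal is correct, and it rests on the same dictionary the paper uses: closed ideals of $\mathcal{Z}_0$ correspond, via zero sets, to coherent families $(Z_n)$ of closed subsets of $[0,1]$ with $Z_m = \bigcup_i t_i^{m,n}[Z_n]$, and your short direction (a point of a nonempty proper $Z_m$ has all its fibers trapped inside $Z_m$, hence never $\varepsilon$-dense) is exactly the paper's argument for (2)$\Rightarrow$(1) read contrapositively. Where you genuinely diverge is in the hard direction. Given witnesses $x_n$ to the failure of (2), the paper first pigeonholes over a finite $1/N$-net (passing to a subsystem) to fix a \emph{single} open ball $U$ missed by all the witness fibers, then forms the sets $C_{m,n}$ of level-$m$ fiber values of points whose level-$m_0$ fiber misses $U$, observes these are nested in $n$, takes $C_m := \bigcap_n C_{m,n}$, and proves the coherence relation $(*)$ by a separate pigeonhole-plus-limit-point argument. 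You instead regard the witness fibers $T_{N,m}(x_N)$ themselves as points of the hyperspace of nonempty compact subsets of $[0,1]$ and extract one subsequence along which they converge in the Hausdorff metric at all levels simultaneously; coherence, nonemptiness, $y_0 \in Z_{m_0}$, and the lower bound on $d_{\mathrm{H}}(\cdot,[0,1])$ then pass to the limit by continuity of $S \mapsto \bigcup_i t_i^{m,n}(S)$ and of $S \mapsto d_{\mathrm{H}}(S,[0,1])$. Your route never needs to fix the ball $U$, and the coherence of the limit family comes for free from continuity instead of the paper's pigeonhole argument for $(*)$; the price is invoking Blaschke-type compactness of the hyperspace and a diagonal extraction, which the paper's nested-intersection construction avoids.

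One caveat: your dismissal of the remainder indices is slightly too quick. A fiber that fails to be $\varepsilon_0$-dense is only guaranteed to miss \emph{some} ball of radius $\varepsilon_0$, which may sit at an endpoint of $[0,1]$; adjoining the points $0$ and $1$ can then invade that ball, and one must pass to a sub-ball of radius roughly $\varepsilon_0/2$ avoiding the endpoints. This is harmless — all your estimates survive with a worse constant, and only properness of the limit set matters for producing the ideal — and the paper's own proof silently ignores the very same boundary contributions (its relation $(*)$ and its claim $C_m = \bigcup_i t_i^{m,n_0}[C_{n_0}]$ are stated without the remainder terms), so on this point you are, if anything, the more careful of the two.
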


\begin{proof}
(1) $\Rightarrow$ (2).  
Suppose that (2) does not hold.  
Then there exist $\varepsilon >0$, $y_0 \in [0,1]$ and $m_0 \in \mathbb{N}$ such that 
for any $n > m_0$ there is $x_n \in [0,1]$ with 
$t_i^{m_0,n}(x_n) = y_0$ for some $i$ and 
\[
	d\bigl(\{t_1^{m_0,n}(x_n), \dots, t_{k(m_0,n)}^{m_0,n}(x_n)\}, [0,1]\bigr) 
	\geq \varepsilon.  
\]
Take $N \in \mathbb{N}$ so that $1/N < \varepsilon/2$.  
For each $n > m_0$ there is $a(n) \in \{0, \dots, N\}$ with 
\[
	U(a(n)/N, 1/N) \cap \{t_1^{m_0,n}(x_n), \dots, t_{k(m_0,n)}^{m_0,n}\} = \varnothing, 
\]
where $U(z, \delta)$ denotes the open ball of center $z$ and radius $\delta$.  
Passing to a subsystem if necessary, we may assume 
that $a(n)$ is constant, say $a$.  Put $U := U(a/N, 1/N)$.  

For each $m, n \in \mathbb{N}$ with $m_0 \leq m \leq n$, set 
\[
\begin{aligned}
	C_{m_0,n} &:= \{ x \in [0,1] \mid t_i^{m_0,n}(x) \notin U \text{ for any } i \}, \\
	C_{m,n} &:= \{ t_i^{m,n}(x) \mid x \in C_{m,n} \}, \\
	C_m &:= \bigcap_{n \geq m} C_{m,n}.  
\end{aligned}
\]
Note that $C_{m_0,n}$ is nonempty, since $x_n$ is in $C_{m_0,n}$.  
Also, if $y$ is in $C_{m,n+1}$, then there exists $x$ in $C_{m_0,n+1}$ with 
$t_i^{m,n+1}(x) = y$ for some $i$.  
Now, since $\iota_{n+1,m} = \iota_{n+1,n} \circ \iota_{n,m}$, 
there are some $j, j'$ with $t_i^{m,n+1}(x) = t_j^{m,n}\bigl(t_{j'}^{n,n+1}(x)\bigr)$.  
On the other hand, $t_i^{m_0,n}\bigl(t_{j'}^{n,n+1}(x)\bigr)$ is not in $U$ for any $i$, 
because $x$ is in $C_{m_0,n+1}$.  
Therefore, $t_{j'}^{n,n+1}(x)$ is in $C_{m_0,n}$, 
whence $y = t_j^{m,n}\bigl(t_{j'}^{n,n+1}(x)\bigr)$ is in $C_{m,n}$.  
Consequently, $C_m$ is a nonempty closed subset of $[0,1]$.  

We shall show
\[
	C_m = \bigcup_{i=1}^{k(m,n)} t_i^{m,n}[C_n].  \tag{$*$}
\]
Clearly, the right-hand side is included in the left-hand side.  
To see the opposite inclusion, let $y$ be in $C_m$.  
Then, for each $l \geq n$, there is $z_l$ in $C_{n,l}$ with 
$t_i^{m,n}(z_l) = y$ for some $i$.  
By the pigeonhole principle, there is $i_0$ with 
$t_{i_0}^{m,n}(z_l) = y$ for infinitely many $l$.  
Let $z$ be a limit point of such $z_l$'s.  
Then clearly $z$ is in $C_n$ and $t_{i_0}^{m,n}(z) = y$.  

For each $m \geq m_0$, set 
\[
	\mathcal{I}_m := \{ f \in \mathcal{Z}_{p_m,q_m} \mid f|_{C_m} \equiv 0 \} 
	\subsetneq \mathcal{Z}_{p_m,q_m}.  
\]
Then, by ($*$), we have 
$\iota_{m+1,m}[\mathcal{Z}_{p_m,q_m}] \cap \mathcal{I}_{m+1} = \iota_{m+1,m}[\mathcal{I}_m]$, 
so the sequence $\{\mathcal{I}_m\}$ defines a closed ideal 
$\mathcal{I}$ of $\mathcal{Z}_0$.  
Since $\mathcal{I}_{m_0}$ includes $\{ f \mid \operatorname{supp} f \subseteq U \}$, 
the ideal $\mathcal{I}$ is nontrivial, so $\mathcal{Z}_0$ is not simple.  

(2) $\Rightarrow$ (1).  
Let $\mathcal{I}$ be a proper ideal of $\mathcal{Z}_0$, and set 
\[
\begin{aligned}
	\mathcal{I}_m &:= \mathcal{I} \cap \mathcal{Z}_{p_m,q_m}, \\
	C_m &:= \{x \mid f(x) = 0 \text{ for all } f \in \mathcal{I}_m \}
\end{aligned}
\]
It suffices to show that $C_m$ coincides with $[0,1]$.  
For this, we may assume $\mathcal{I}_m \subsetneq \mathcal{Z}_{p_m,q_m}$, 
so $C_m$ is nonempty.  Let $y$ be in $C_m$.  
By assumption, for any $\varepsilon > 0$ there is $n_0 > m$ such that 
if $t_i^{m,n_0}(x) = y$, then 
\[
	d\bigl(\{t_1^{m,n_0}(x), \dots, t_{k(m,n_0)}^{m,n_0}(x)\}, [0,1]\bigr) < \varepsilon.  
\]
However, since $C_m = \bigcup_{i=1}^{k(m,n_0)} t_i^{m,n_0}[C_{n_0}]$ by construction, 
we can find $x \in C_{n_0}$ with $t_i^{m,n_0}(x) = y$ for some $i$, and 
\[
	\{ t_1^{m,n_0}(x), \dots, t_{k(m,n_0)}^{m,n_0}(x) \} \subseteq C_m.  
\]
Consequently, it follows that the Hausdorff distance 
between $C_m$ and $[0,1]$ is less than arbitrary $\varepsilon$, so $C_m = [0,1]$.  
\end{proof}

For $y \in [0,1]$ and $\varepsilon > 0$, we set 
\[
\begin{aligned}
	a_{m,n}(y, \varepsilon) :=& \max\{ i \mid \max t_i^{m,n} \leq y+\varepsilon \}, \\
	b_{m,n}(y, \varepsilon) :=& \max\{ i \mid \min t_i^{m,n} < y-\varepsilon \} \\
	c_{m,n}(y,\varepsilon) :=&  \max\{b_{m,n}(y,\varepsilon) - a_{m,n}(y,\varepsilon), 0\} \\
	=& \# \{ i \mid \min t_i^{m,n} < y-\varepsilon \ \& \ 
	\max t_i^{m,n} > y+\varepsilon \}.  
\end{aligned}
\]

\begin{lem}\label{lem:_characterization_of_monotraciality}
The following are equivalent.  
\begin{enumerate}
\item
	The limit $\mathcal{Z}_0$ is monotracial.  
\item
	For any $y$, any $\varepsilon$ and any $m$, 
	\[
		\lim_{n \to \infty} c_{m,n}(y,\varepsilon)/k(m,n) = 0.  
	\]
\end{enumerate}
\end{lem}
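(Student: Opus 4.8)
The plan is to run everything on the level of traces regarded as probability Radon measures on $[0,1]$. Write $T(\mathcal{Z}_n)$ for the trace simplex of $\mathcal{Z}_n$; by the identification preceding the definition of $\mathscr{K}_\mathcal{Z}$ it is weak-$*$ homeomorphic to the (compact, metrizable) space of probability measures on $[0,1]$. A tracial state on $\mathcal{Z}_0$ is the same as a compatible family $(\nu_n)$ with $\iota_{n,m}^*\nu_n=\nu_m$, so $T(\mathcal{Z}_0)=\varprojlim_n\bigl(T(\mathcal{Z}_n),\iota_{n,m}^*\bigr)$ and $\mathcal{Z}_0$ is monotracial exactly when this inverse limit is a singleton. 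The one computation I need at the outset is the pullback formula: writing $a_0,b_0$ for the remainder indices of $\iota_{n,m}$ and using Proposition~\ref{prop:_approximate_diagonalizability} together with the conjugation invariance of $\operatorname{tr}$ (so the unitaries $v_s$ play no role), the trace given by a measure $\nu$ on $\mathcal{Z}_n$ pulls back to
\[
	\iota_{n,m}^*\nu=\frac{a_0p_m}{p_nq_n}\,\delta_0+\frac{p_mq_m}{p_nq_n}\sum_{i=1}^{k(m,n)}(t_i^{m,n})_*\nu+\frac{b_0q_m}{p_nq_n}\,\delta_1,
\]
where $a_0p_m+k(m,n)p_mq_m+b_0q_m=p_nq_n$. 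The two boundary atoms are independent of $\nu$.

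For $(2)\Rightarrow(1)$, let $(\nu_n),(\nu_n')$ be two compatible families and fix $m$ and a continuous $f$ on $[0,1]$. Subtracting the two pullback formulas, the boundary atoms cancel and each summand is controlled by an oscillation:
\[
	\Bigl|\textstyle\int f\,d\nu_m-\int f\,d\nu_m'\Bigr|\le\frac{p_mq_m}{p_nq_n}\sum_{i=1}^{k(m,n)}\operatorname{osc}_{[0,1]}(f\circ t_i^{m,n})\le\frac{p_mq_m}{p_nq_n}\sum_i\omega_f\bigl(\operatorname{diam}\operatorname{Im}t_i^{m,n}\bigr),
\]
where $\omega_f$ is a modulus of continuity of $f$. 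I then fix $\eta>0$, choose an $\eta$-dense finite set $y_1,\dots,y_J\subseteq[0,1]$, and observe that any eigenvalue function with $\operatorname{diam}\operatorname{Im}t_i^{m,n}>4\eta$ has midpoint within $\eta$ of some $y_j$, hence is counted by $c_{m,n}(y_j,\eta)$; so the number of such functions is at most $\sum_j c_{m,n}(y_j,\eta)$. Splitting the sum and using $p_mq_m/p_nq_n\le 1/k(m,n)$ yields
\[
	\Bigl|\textstyle\int f\,d\nu_m-\int f\,d\nu_m'\Bigr|\le\omega_f(4\eta)+2\|f\|_\infty\sum_{j=1}^{J}\frac{c_{m,n}(y_j,\eta)}{k(m,n)}.
\]
By~(2) the last sum tends to $0$ as $n\to\infty$ while the left side is independent of $n$; letting $n\to\infty$ and then $\eta\to0$ forces $\int f\,d\nu_m=\int f\,d\nu_m'$. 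As $f$ and $m$ are arbitrary, the families coincide, so the trace is unique.

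For $(1)\Rightarrow(2)$ I argue contrapositively. Suppose $(2)$ fails: there are $y_0,\varepsilon_0,m_0$, a number $\delta>0$ and infinitely many $n$ with $c_{m_0,n}(y_0,\varepsilon_0)\ge\delta\,k(m_0,n)$. Fix a Lipschitz $h\colon[0,1]\to[0,1]$ with $h\equiv 1$ on $[0,y_0-\varepsilon_0]$ and $h\equiv 0$ on $[y_0+\varepsilon_0,1]$, and set $\Phi(\nu)=\int h\,d\nu$, a weak-$*$ continuous affine functional. For such an $n$ put $a^*=a_{m_0,n}(y_0,\varepsilon_0)$ and $b^*=b_{m_0,n}(y_0,\varepsilon_0)$, so $c_{m_0,n}(y_0,\varepsilon_0)=b^*-a^*>0$. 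Since the normalized pattern is pointwise increasing, at a point $s_-$ realizing $\min t_{b^*}^{m_0,n}<y_0-\varepsilon_0$ one has $t_i^{m_0,n}(s_-)<y_0-\varepsilon_0$ for all $i\le b^*$, and at a point $s_+$ realizing $\max t_{a^*+1}^{m_0,n}>y_0+\varepsilon_0$ one has $t_i^{m_0,n}(s_+)>y_0+\varepsilon_0$ for all $i\ge a^*+1$. Evaluating the pullback formula on the point-mass traces $\delta_{s_\mp}$ (the boundary atoms contribute equally and cancel in the difference) gives
\[
	\Phi(\iota_{n,m_0}^*\delta_{s_-})-\Phi(\iota_{n,m_0}^*\delta_{s_+})\ge\frac{p_{m_0}q_{m_0}}{p_nq_n}(b^*-a^*)\ge\frac{c_{m_0,n}(y_0,\varepsilon_0)}{k(m_0,n)+2}\ge\frac{\delta}{3},
\]
where the middle inequality uses $p_nq_n<(k(m_0,n)+2)p_{m_0}q_{m_0}$ from the dimension identity and the last uses $k\ge 1$. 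Thus $\Phi$ oscillates by at least $\delta/3$ on $K_n:=\iota_{n,m_0}^*\bigl(T(\mathcal{Z}_n)\bigr)$ for infinitely many $n$. The sets $K_n$ decrease, and the projection of $T(\mathcal{Z}_0)$ to $T(\mathcal{Z}_{m_0})$ is $K_\infty:=\bigcap_n K_n$; a routine compactness argument (the inverse limit of nonempty compacta is nonempty) shows $\max_{K_\infty}\Phi=\lim_n\max_{K_n}\Phi$ and likewise for the minimum, so the oscillation of $\Phi$ on $K_\infty$ is the decreasing limit of the oscillations on the $K_n$, hence at least $\delta/3$. Therefore $K_\infty$, and with it $T(\mathcal{Z}_0)$, has more than one point, so $\mathcal{Z}_0$ is not monotracial.

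The \textbf{main obstacle} is not any single estimate but keeping the bookkeeping uniform. In the forward direction the delicate point is the covering step that bounds the number of high-oscillation eigenvalue functions by a finite sum $\sum_j c_{m,n}(y_j,\eta)$ over a grid, so that hypothesis~(2)---which is asserted only pointwise in $y$---can be applied at finitely many $y_j$ at once. In the reverse direction the subtlety is that the point-mass witnesses $\delta_{s_\pm}$ depend on $n$ and need not be compatible across the system; the remedy is to record the bound $\delta/3$ as the \emph{oscillation of one fixed continuous functional on $K_n$}, make it uniform in $n$ via the $1/(k+2)$ estimate, and then transport it to the inverse limit by nested compactness. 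Getting the normalization of the pushforward formula right---in particular noticing that the two boundary atoms are $\nu$-independent and cancel in every difference---is what makes both halves go through cleanly.
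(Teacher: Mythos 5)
Your proof is correct and follows essentially the same route as the paper's: the same grid-covering argument (bounding the number of large-variation eigenvalue maps by a finite sum of $c_{m,n}(y_j,\eta)$'s over an $\eta$-dense set) drives (2)$\Rightarrow$(1), and the same device of pulling back point-mass traces at points witnessing the eigenvalue spread, separated by a bump function, drives (1)$\Rightarrow$(2). The differences are only organizational: your explicit pullback formula (with the $\nu$-independent boundary atoms) lets you estimate trace differences directly, where the paper inserts an intermediate approximation of the measures by convex combinations of Dirac masses, and your nested-compactness argument spells out what the paper compresses into the remark that the pulled-back Dirac traces ``are restrictions of some tracial states on $\mathcal{Z}_0$.''
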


\begin{proof}
(1) $\Rightarrow$ (2).  
Suppose (2) does not hold.  
Then, passing to a subsystem if necessary, we may assume that 
there exist $y \in [0,1]$, $\varepsilon > 0$ and $\delta > 0$ with 
$c_{m,n}(y,\varepsilon)/k(m,n) \geq \delta$ for all $n > m$.  
Let $x_{m,n}^1, x_{m,n}^2 \in [0,1]$ be such that 
\[
\begin{aligned}
	t_{a_{m,n}(y,\varepsilon)+1}^{m,n}(x_{m,n}^1) &> y+\varepsilon, & 
	t_{b_{m,n}(y,\varepsilon)+1}^{m,n}(x_{m,n}^2) &< y-\varepsilon, 	
\end{aligned}
\]
and $\tau_1, \tau_2$ be limit points of the tracial states 
$\iota_{n,m}^*(\delta_{x_{m,n}^1}), \iota_{n,m}^*(\delta_{x_{m,n}^2})$ respectively.  
We note that these are restrictions of some tracial states on $\mathcal{Z}_0$.  
Now, if $f \in C[0,1]$ is taken so that 
\[
\begin{aligned}
	f|_{[0,y-\varepsilon]} &\equiv 0,  & f|_{[y+\varepsilon,1]} &\equiv 1, &
	0 &\leq f \leq 1, 
\end{aligned}
\]
then 
\[
\begin{aligned}
	\tau_1 &\geq \varlimsup \bigl[1-a_{m,n}(y,\varepsilon)/k(m,n)\bigr], \\
	\tau_2 &\leq \varliminf \bigl[1 - b_{m,n}(y,\varepsilon)/k(m,n)\bigr], 
\end{aligned}
\]
whence 
\[
	\tau_1(f) - \tau_2(f) 
	\geq \varlimsup c_{m,n}(y,\varepsilon)/k_{m,n}(y,\varepsilon) \geq \delta.  
\]
Consequently, $\mathcal{Z}_0$ is multitracial.  

(2) $\Rightarrow$ (1).  
Suppose (2) holds.  
We shall first show that, given $m \in \mathbb{N}$, $\delta > 0$ and $\varepsilon > 0$, 
one can find $n > m$ with 
\[
	\#\{ i \mid \operatorname{diam} \operatorname{Im} t_i^{m,n} > \delta \}/k(m,n) 
	< \varepsilon.  
\]
Indeed, take $N \in \mathbb{N}$ with $1/N < \delta/3$, 
and let $n(j)$ be sufficiently large so that 
\[
\begin{aligned}
	\frac{c_{m,n(j)}(j/N, 1/N)}{k(m,n(j))} &< \frac{\varepsilon}{N} & 
	(j = 1, \dots, N-1).  
\end{aligned}
\]
Set $n := \max_j n(j)$.  
If $\operatorname{diam}\operatorname{Im}t_i^{m,n} > \delta$, then 
\[
\begin{aligned}
	\min t_i^{m,n} &< (j-1)/N, & \max t_i^{m,n} > (j+1)/N
\end{aligned}
\]
for some $j$, so the desired inequality follows.  

We shall next show that, for $f \in C[0,1]$, $m \in \mathbb{N}$ and $\varepsilon > 0$, 
there exists $n > m$ with 
\[
	\sup_{x,x' \in [0,1]} 
	\bigl|\bigl[\iota_{n,m}^*(\delta_x) - \iota_{n,m}^*(\delta_{x'})\bigr](f)\bigr| 
	\leq \varepsilon.  
\]
For this, we may assume $\|f\| \leq 1$.  Take $\delta > 0$ so that 
$|y-y'| < \delta$ implies $\|f(y)-f(y')\| \leq \varepsilon/3$, 
and put $J := \{ i \mid \operatorname{diam} \operatorname{Im} t_i^{m,n} > \delta \}$.  
By what we proved in the preceding paragraph, 
there exists $n > m$ with $\#J/k(m,n) < \varepsilon/3$.  
Then, for $x,x' \in [0,1]$, we have 
\[
\begin{aligned}
	& \bigl|\bigl[\iota_{n,m}^*(\delta_x) - \iota_{n,m}^*(\delta_{x'})\bigr](f)\bigr| \\
	{}={}& \frac{1}{k(m,n)}
	\Bigl|\sum_i f\bigl(t_i^{m,n}(x)\bigr)-f\bigl(t_i^{m,n}(x')\bigr)\Bigr| \\
	{}\leq{}& \frac{1}{k(m,n)}\Bigl(\sum_{i \in J} + \sum_{i \notin J}\Bigr)
	\bigl|f\bigl(t_i^{m,n}(x)\bigr)-f\bigl(t_i^{m,n}(x)\bigr)\bigr| \\ 
	{}\leq{} &\varepsilon, 
\end{aligned}
\]
as desired.  

Finally, we shall show that $\mathcal{Z}_0$ is monotracial.  
Let $\tau, \tau'$ be tracial states on $\mathcal{Z}_0$ and 
$\mu_\tau, \mu_{\tau'}$ be the corresponding measures on $[0,1]$. 
Fix an element $f$ in the center of $\mathcal{Z}_{p_m,q_m}$, 
which is canonically identified with an element of $C[0,1]$, 
and take sufficiently large $m > n$ so that 
\[
	\sup_{x,x' \in [0,1]} 
	\bigl|\bigl[\iota_{n,m}^*(\delta_x) - \iota_{n,m}^*(\delta_{x'})\bigr](f)\bigr| 
	\leq \varepsilon/3.  
\]
Since the convex combinations of the Dirac measures are weakly* dense in 
the set of probability measures, we can find 
$x_1, \dots, x_l, x_1', \dots, x_l'$ in $[0,1]$ with
\[
\begin{aligned}
	\Bigl|\Bigl(\mu_\tau - \sum_j \delta_{x_j}/l\Bigr)(f \circ t_i^{m,n})\Bigr| 
	&< \varepsilon/3, \\
	\Bigl|\Bigl(\mu_{\tau'} - \sum_j \delta_{x_j'}/l\Bigr)(f \circ t_i^{m,n})\Bigr| 
	&< \varepsilon/3
\end{aligned}
\]
for all $i$.   Consequently, 
\[
\begin{aligned}
	&|\tau(f) - \tau(f')| \\
	{}={}& \frac{1}{k(m,n)}
	\Bigl|\sum_i \mu_\tau(f \circ t_i^{m,n}) - \mu_{\tau'}(f \circ t_i^{m,n})\Bigr| \\
	{}\leq{}& \frac{2}{3}\varepsilon + \frac{1}{k(m,n) \cdot l}
	\Bigl|\sum_{i,j} \delta_{x_j}(f \circ t_i^{m,n}) - 
	\delta_{x_j'}(f \circ t_i^{m,n})\Bigr| \\
	{}={}& \frac{2}{3}\varepsilon + \frac{1}{l}
	\Bigl|\sum_j\bigl[\iota_{n,m}^*(\delta_{x_j}) 
	- \iota_{n,m}^*(\delta_{x_j'})\bigr](f)\Bigr| \\
	{}\leq{}& \varepsilon.   
\end{aligned}
\]
Since $\varepsilon$ was arbitrary, $\tau(f) = \tau'(f)$, and so $\tau = \tau'$.  
\end{proof}

\begin{prop}\label{prop:_simplicity_and_monotraciality}
The limit C*-algebra $\mathcal{Z}_0$ is simple and monotracial 
if and only if $\lim_n V(\iota_{n,m}) = 0$ for each $m$.  
\end{prop}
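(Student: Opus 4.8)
The plan is to check, in both directions, the eigenvalue-pattern criteria furnished by Lemmas~\ref{lem:_characterization_of_simplicity} and~\ref{lem:_characterization_of_monotraciality}. Assume first that $V(\iota_{n,m}) \to 0$ for every $m$. Monotraciality is then immediate: as soon as $V(\iota_{n,m}) < 2\varepsilon$, no eigenvalue map can satisfy both $\min t_i^{m,n} < y - \varepsilon$ and $\max t_i^{m,n} > y + \varepsilon$, so $c_{m,n}(y,\varepsilon) = 0$ and condition~(2) of Lemma~\ref{lem:_characterization_of_monotraciality} holds. For simplicity I would invoke the fact (noted after Lemma~\ref{lem:_universal_measures}) that every morphism between prime dimension drop algebras is injective; since $\iota_{n,m}$ cannot depend on the values of $f$ on a nonempty open set, this forces $\bigcup_i \operatorname{Im} t_i^{m,n} = [0,1]$. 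If moreover $V(\iota_{n,m}) < \varepsilon$, then for any fixed $x$ each $\operatorname{Im} t_i^{m,n}$ lies within $\varepsilon$ of the point $t_i^{m,n}(x)$, so $\{t_1^{m,n}(x), \dots, t_{k(m,n)}^{m,n}(x)\}$ is already $\varepsilon$-dense in $[0,1]$; this is precisely condition~(2) of Lemma~\ref{lem:_characterization_of_simplicity}. Hence $\mathcal{Z}_0$ is simple and monotracial.

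For the converse I would argue by contraposition, starting from the observation that $V(\iota_{n,m})$ is non-increasing in $n$: writing $\iota_{n',m} = \iota_{n',n}\circ\iota_{n,m}$, each eigenvalue map of $\iota_{n',m}$ is a composite $t_i^{m,n}\circ t_j^{n,n'}$ whose image sits inside $\operatorname{Im} t_i^{m,n}$, so its diameter is at most $V(\iota_{n,m})$. Thus $V(\iota_{n,m})$ converges to some $V_\infty(m)$, and it suffices to exclude $V_\infty(m) = \delta > 0$. In that case every level carries an eigenvalue map of diameter $\geq \delta$, and the same image-containment shows that such a map factors through one at the previous level; the large maps therefore form an inverse system of nonempty finite sets, and a compactness (König-type) argument yields a coherent sequence $\{t_{i_n}^{m,n}\}$ with $\operatorname{Im} t_{i_{n+1}}^{m,n+1} \subseteq \operatorname{Im} t_{i_n}^{m,n}$ and all diameters $\geq \delta$. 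Consequently $K := \bigcap_n \operatorname{Im} t_{i_n}^{m,n}$ is a closed subinterval of $[0,1]$ of length at least $\delta$.

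The crux, and what I expect to be the main obstacle, is to derive a contradiction from this persistent interval. Note that monotraciality alone does not conflict with a single large map: since $c_{m,n}(y_0,\delta/4) \geq 1$ for the midpoint $y_0$ of $K$ while $k(m,n)\to\infty$, condition~(2) of Lemma~\ref{lem:_characterization_of_monotraciality} merely tells us that the proportion of large maps vanishes, i.e. that the persistent map is asymptotically isolated. The contradiction must therefore be extracted from simplicity. My plan is to show that an isolated persistent large map behaves like an evaluation over $K$ and hence yields a nontrivial ideal, in the spirit of the ideal construction in Lemma~\ref{lem:_characterization_of_simplicity}: I would look for a fixed open interval and a trajectory of points whose entire eigenvalue set avoids it. The delicate point is that the overwhelmingly many small maps could a priori still be $\varepsilon$-dense at every point, so simplicity is not obviously violated; overcoming this should require combining the coherence of $\{t_{i_n}^{m,n}\}$ with the endpoint-multiplicity and coprimality constraints satisfied by the small maps, in order to locate a region that the eigenvalue sets cannot cover along the distinguished trajectory determined by $K$.
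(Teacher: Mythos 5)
Your forward implication is correct and is exactly what the paper intends (the paper simply cites Lemmas~\ref{lem:_characterization_of_simplicity} and~\ref{lem:_characterization_of_monotraciality}): injectivity forces $\bigcup_i \operatorname{Im} t_i^{m,n} = [0,1]$, so $V(\iota_{n,m})<\varepsilon$ makes every eigenvalue set $\varepsilon$-dense, and $c_{m,n}(y,\varepsilon)=0$ once $V(\iota_{n,m})<2\varepsilon$. Your reduction of the converse to a persistent interval $K$ of length $\geq\delta$ (monotonicity of $V(\iota_{n,m})$ in $n$, up to the harmless constant blocks in the composed pattern, plus a K\"onig argument) is also sound, and corresponds to the paper's step of passing to a subsystem in which, for every $n$, some eigenvalue function of $\iota_{n,1}$ covers a fixed interval $[y-\varepsilon,y+\varepsilon]$. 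But the step you explicitly leave open is the entire substance of the hard direction, so the proposal has a genuine gap.

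Moreover, the route you sketch for closing it --- showing that the persistent large map yields a nontrivial ideal, i.e.\ contradicting simplicity directly --- is a dead end, for precisely the reason you identify yourself: one isolated large eigenvalue map is perfectly compatible with every eigenvalue set being $\varepsilon$-dense, so no ideal in the style of Lemma~\ref{lem:_characterization_of_simplicity} can exist, and endpoint multiplicities or coprimality do not change this. The paper's contradiction is of a different nature: it is a counting argument using simplicity \emph{positively} together with monotraciality. Concretely: (a) simplicity forces $\operatorname{diam}\operatorname{Im}t_1^{m,n}\to 0$, and by tracking a point $y_0\in\bigcap_n\operatorname{Im}\bigl(t_1^{m,m+1}\circ\dots\circ t_1^{n,n+1}\bigr)$ through Lemma~\ref{lem:_characterization_of_simplicity} one gets, for \emph{every} $y$, a composite of the form $t_i^{m,n}\circ t_1^{n,n+1}$ with image inside $[y-\varepsilon/2,y+\varepsilon/2]$; composing it with all eigenvalue maps of the next step produces at least $k(4,5)$ such concentrated composites. (b) A topological lemma shows that if $f$ covers $[y-\varepsilon,y+\varepsilon]$ and $f\circ g$ still covers it, then $\operatorname{diam}\operatorname{Im}g\geq\delta$ for a $\delta$ depending only on the finitely many candidate $f$'s. (c) Monotraciality, via Lemma~\ref{lem:_characterization_of_monotraciality}, makes the proportion of eigenvalue maps of $\iota_{5,4}$ with diameter $\geq\delta$ smaller than $1/\#F$. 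Chaining (a)--(c) gives that the covering composites at level $5$ are strictly outnumbered by the concentrated ones; this numerical comparison (not merely the scarcity of covering composites) is what rules out a covering function in the \emph{normalized} eigenvalue pattern of $\iota_{5,1}$ --- the sorted functions mix values of different composites, and only an excess of concentrated composites prevents the sorted functions from covering $[y-\varepsilon,y+\varepsilon]$. This contradicts persistence. None of the three ingredients (a)--(c) appears in your plan, and your closing hope that ``the small maps cannot cover along the distinguished trajectory'' is exactly what (a) shows to be false: simplicity makes them cover everything, and that is what the proof exploits.
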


\begin{proof}
It is clear from Lemmas \ref{lem:_characterization_of_simplicity} 
and~\ref{lem:_characterization_of_monotraciality} that 
if $\lim_n V(\iota_{n,m}) = 0$ for all $m$, then $\mathcal{Z}_0$ is simple and monotracial.  
For the opposite implication, first note that if $\mathcal{Z}_0$ is simple, then 
\[
	\lim_n \operatorname{diam} \operatorname{Im} t_1^{m,n} = 0
\]
for each $m$.  Indeed, for any $\varepsilon > 0$, there exists sufficiently large $n$ 
such that if $t_i^{m,n}(x) = \varepsilon$ for some $i$ and $x \in [0,1]$, then 
\[
	d\bigl(\{t_1^{m,n}(x), \dots, t_{k(m,n)}^{m,n}(x)\}, [0,1]\bigr) < \varepsilon, 
\]
by Lemma~\ref{lem:_characterization_of_simplicity}.  
This implies that $\varepsilon \notin \operatorname{Im} t_1^{m,n}$, 
and since $0 \in \operatorname{Im} t_1^{m,n}$, 
it follows that $\operatorname t_1^{m,n} \subseteq [0,\varepsilon)$.  

Next, for each $m, n \in \mathbb{N}$, 
let $\Delta_{m,n}$ be a map from $(0,1]$ to $(0,1]$ such that 
$|x-x'| \leq \Delta_{m,n}(\varepsilon)$ implies 
$|t_i^{m,n}(x)-t_i^{m,n}(x')| \leq \varepsilon$ for all $i$.  
Passing to a subsystem if necessary, we may assume that 
$\operatorname{Im} t_1^{n,n+1}$ is included in $[0,\Delta_{n-1,n}(\varepsilon/2)]$.  
For a fixed $m \in \mathbb{N}$, set 
\[
	F_n := \operatorname{Im} t_1^{m,m+1} \circ \dots \circ t_1^{n,n+1}
\]
and take $y_0 \in \bigcap_n F_n$.  
By Lemma~\ref{lem:_characterization_of_simplicity}, 
there is $n > m$ such that if $x \in [0,1]$ satisfies $t_i^{m,n}(x) = y_0$ 
for some $i$, then the distance 
between $\{t_1^{m,n}(x), \dots, t_{k(m,n)}^{m,n}(x)\}$ and $[0,1]$ is less than 
$\varepsilon/2$.  
On the other hand, by definition of $F_n$, we can find 
$x \in \operatorname{Im} t_1^{n,n+1} \subseteq [0,\Delta_{n,n+1}(\varepsilon/2)]$ with 
$t_i^{m,n}(x) = y_0$ for some $i$.  
Consequently, it follows that for any $y \in [0,1]$ there exists $i$ with 
$\operatorname{Im} t_i^{m,n} \circ t_1^{n,n+1} \subseteq [y-\varepsilon,y+\varepsilon]$.  

Now, let $f \colon [0,1] \to [0,1]$ be a continuous map such that 
the image of $f$ includes $[y-\varepsilon, y+\varepsilon]$ for some $y \in [0,1]$.  
We shall show the existence of $\delta > 0$ such that 
if a continuous map $g \colon [0,1] \to [0,1]$ satisfies 
$\operatorname{Im} f \circ g \supseteq [y-\varepsilon, y+\varepsilon]$, 
then $\operatorname{diam} \operatorname{Im} g \geq \delta$.  
Indeed, let $(a_n)_n$ and $(b_n)_n$ be 
enumerations of the boundaries of $f^{-1}(y-\varepsilon)$ and $f^{-1}(y+\varepsilon)$ 
respectively.  
If the image of $f \circ g$ includes $[y-\varepsilon, y+\varepsilon]$, 
then the image of $g$ must contain $a_{n'}$ and $b_{n''}$ such that 
there is no $a_n$ or $b_n$ between $a_{n'}$ and $b_{n''}$.  
However, there can be only finitely many such pairs $(n',n'')$, 
because otherwise $f$ cannot be uniformly continuous.  Thus, 
\[
	\delta := \min \bigl\{|a_{n'} - b_{n''}| \bigm| \not\exists n, \ 
	a_{n'} \lessgtr a_n \lessgtr b_{n''} \text{ or } 
	a_{n'} \lessgtr b_n \lessgtr b_{n''} \bigr\}
\]
has the desired property.  

Finally, suppose that there is $m \in \mathbb{N}$ with $\lim_n V(\iota_{n,m}) > 0$.  
Without loss of generality, we may assume $m = 1$.  
Also, by passing to a subsystem if necessary, we may assume that 
there is $y \in [0,1]$ and $\varepsilon > 0$ with the following property: 
For any $n$, there exists $i$ such that the image of $t_i^{1,n}$ includes 
$[y-\varepsilon, y+\varepsilon]$.  
By what we proved in the second paragraph, 
it is easy to find $n_0, i_1, i_2 \in \mathbb{N}$ with
\[
	\operatorname{Im} t_{i_1}^{1,2} \circ t_{i_2}^{2,n_0} \circ t_1^{n_0,n_0+1} 
	\subseteq [y-\varepsilon/2, y+\varepsilon/2].  
\]
We may assume $n_0 = 3$.  Set 
\[
	F := \{ t_h^{1,2} \circ t_i^{2,3} \circ t_j^{3,4} \mid 
	\operatorname{Im} t_h^{1,2} \circ t_i^{2,3} \circ t_j^{3,4} \supseteq 
	[y-\varepsilon, y+\varepsilon] \}
\]
and take $\delta > 0$ so that if $f$ is in $F$ and if the image of $f \circ g$ 
includes $[y-\varepsilon, y+\varepsilon]$, 
then $\operatorname{diam} \operatorname{Im} g \geq \delta$.  
Since $\mathcal{Z}_0$ is monotracial, 
we may assume 
\[
	\#\{t_k^{4,5} \mid \operatorname{diam} \operatorname{Im} t_k^{4,5} \geq \delta \}/k(4,5) 
	< 1/\#F, 
\]
by Lemma~\ref{lem:_characterization_of_monotraciality}.  Then 
\[
\begin{aligned}
	& \#\{ t_h^{1,2} \circ t_i^{2,3} \circ t_j^{3,4} \circ t_k^{4,5} \mid 
	\operatorname{Im}t_h^{1,2} \circ t_i^{2,3} \circ t_j^{3,4} \circ t_k^{4,5} 
	\supseteq [y-\varepsilon, y+\varepsilon] \} \\
	{}\leq{} & \#F \times \#\{t_k^{4,5} \mid \operatorname{diam} t_k^{4,5} \} \leq k(4,5) \\
	{}\leq{} & \#\{ t_h^{1,2} \circ t_i^{2,3} \circ t_j^{3,4} \circ t_k^{4,5} \mid 
	\operatorname{Im}t_h^{1,2} \circ t_i^{2,3} \circ t_j^{3,4} \circ t_k^{4,5} 
	\subseteq [y-\varepsilon/2, y+\varepsilon/2] \}.  
\end{aligned}
\]
However, this implies that there is no $i$ with 
$\operatorname{Im} t_i^{1,5} \supseteq [y-\varepsilon,y+\varepsilon]$, 
which is a contradiction.  
\end{proof}

Combining Propositions \ref{prop:_variation_and_limit} 
and~\ref{prop:_simplicity_and_monotraciality}, we obtain the following result.  

\begin{thm}\label{thm:_characterization_of_jiang_su_algebra}
For an inductive system $\{\iota_{n,m} \colon \mathcal{Z}_m \to \mathcal{Z}_n\}$ of 
prime dimension drop algebras, the following are all equivalent.  
\begin{enumerate}
\item
	The inductive limit of $\{\iota_{n,m} \colon \mathcal{Z}_m \to \mathcal{Z}_n\}$ 
	is isomorphic to $\mathcal{Z}$.  
\item
	The equality $\lim_n V(\iota_{n,m}) = 0$ holds for all $m$.  
\item
	The inductive limit of $\{\iota_{n,m} \colon \mathcal{Z}_m \to \mathcal{Z}_n\}$ 
	is simple and monotracial.  
\end{enumerate}
\end{thm}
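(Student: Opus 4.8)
The plan is to close a short cycle of implications, using the two propositions cited just before the statement together with one auxiliary observation about $\mathcal{Z}$ itself. Proposition~\ref{prop:_simplicity_and_monotraciality} already supplies the equivalence (2)~$\Leftrightarrow$~(3), and Proposition~\ref{prop:_variation_and_limit} already supplies (2)~$\Rightarrow$~(1). What is missing is a single further arrow returning to (2) or (3) from (1); once that is available the three statements assemble into the loop (1)~$\Rightarrow$~(3)~$\Leftrightarrow$~(2)~$\Rightarrow$~(1), which is the desired three-way equivalence.

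The implication (1)~$\Rightarrow$~(3) would be immediate if we knew that the Fraïssé limit $\langle \mathcal{Z}, \operatorname{tr}\rangle$ were itself simple and monotracial, since both properties are invariant under isomorphism. So the genuine content lies in verifying that $\mathcal{Z}$ is simple and monotracial, and this cannot be imported from the classical theory, because the identification of $\mathcal{Z}$ with the Jiang--Su algebra is a \emph{consequence} of this very theorem. To establish it, I would first exhibit one concrete inductive system of prime dimension drop algebras whose connecting maps have vanishing variation: starting from any $\mathcal{Z}_{p_1,q_1}$, I would repeatedly invoke Proposition~\ref{prop:_embeddability_of_dimension_drop_algebras} to select coprime pairs $(p_{n+1},q_{n+1})$ with $p_{n+1},q_{n+1}$ large and unital embeddings $\iota_{n+1,n}$ of variation less than $1/n$, arranging, as in the trace-preserving claim proved inside Theorem~\ref{thm:_dimension_drop_algebras_form_a_fraisse_class}, that each embedding preserves suitably chosen faithful traces. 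By construction $\lim_n V(\iota_{n,m}) = 0$ for every $m$, so Proposition~\ref{prop:_variation_and_limit} shows this system is regular, i.e.\ its limit is isomorphic to $\langle \mathcal{Z}, \operatorname{tr}\rangle$. Applying the (2)~$\Rightarrow$~(3) direction of Proposition~\ref{prop:_simplicity_and_monotraciality} to the same system then certifies that its limit---and hence $\mathcal{Z}$---is simple and monotracial.

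With that in hand the theorem follows formally. For (1)~$\Rightarrow$~(3): any system whose limit is isomorphic to $\mathcal{Z}$ has a simple monotracial limit, since simplicity and monotraciality transfer across the isomorphism. Combining this with (3)~$\Rightarrow$~(2) from Proposition~\ref{prop:_simplicity_and_monotraciality} and (2)~$\Rightarrow$~(1) from Proposition~\ref{prop:_variation_and_limit} closes the cycle.

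I expect no serious obstacle: the theorem is essentially a bookkeeping combination of the two main propositions. The only point requiring care is the auxiliary step that $\mathcal{Z}$ is simple and monotracial, and within that step the one mild subtlety is to ensure the vanishing-variation system can be equipped with compatible faithful traces so that the $\iota_{n+1,n}$ are genuine morphisms of $\mathscr{K}_\mathcal{Z}$; this is exactly what the trace-preserving refinement used in the proof of Theorem~\ref{thm:_dimension_drop_algebras_form_a_fraisse_class} provides.
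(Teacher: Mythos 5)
Your overall architecture coincides with the paper's (whose proof is literally the one-line combination of Propositions~\ref{prop:_variation_and_limit} and~\ref{prop:_simplicity_and_monotraciality}), and you correctly isolate the one piece of glue the paper leaves implicit: the implication (1)~$\Rightarrow$~(3) needs the fact that $\mathcal{Z}$ itself is simple and monotracial, which cannot be imported from \cite{jiang99:_simple_unital} and must instead be obtained by exhibiting a single vanishing-variation system in $\mathscr{K}_\mathcal{Z}$ and running the two propositions on it. However, your construction of that system has a genuine gap. You choose one-step embeddings with $V(\iota_{n+1,n}) < 1/n$ and assert that ``by construction'' $\lim_n V(\iota_{n,m}) = 0$ for every $m$. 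This does not follow, because variation does not compose: the eigenvalue pattern of $\iota_{n,m} = \iota_{n,n'} \circ \iota_{n',m}$ consists (up to constant functions coming from endpoint evaluations) of the composites $t_i^{m,n'} \circ t_j^{n',n}$, whence
\[
	V(\iota_{n,m}) \leq \max_{i,j} \operatorname{diam}
	t_i^{m,n'}\bigl[\operatorname{Im} t_j^{n',n}\bigr]
	\leq \max_i \omega_i\bigl(V(\iota_{n,n'})\bigr),
\]
where $\omega_i$ is a modulus of uniform continuity of $t_i^{m,n'}$. Small variation of the later maps thus reaches the composite only after being distorted by the moduli of continuity of the earlier patterns, and these moduli can be arbitrarily bad (the eigenvalue functions produced by Proposition~\ref{prop:_embeddability_of_dimension_drop_algebras} can have very large slopes). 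What your choices actually guarantee is only the monotonicity $V(\iota_{n,m}) \leq V(\iota_{m+1,m}) < 1/m$ (since $\operatorname{Im}(t_i^{m,n'} \circ t_j^{n',n}) \subseteq \operatorname{Im} t_i^{m,n'}$), which bounds $\lim_n V(\iota_{n,m})$ by $1/m$ but does not make it zero for a fixed $m$.

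The repair is standard but must be said: the choice has to be adaptive. At stage $n$, once the finitely many composites $\iota_{n,m}$ ($m < n$) are fixed, take a common modulus of uniform continuity $\omega$ for all their eigenvalue functions and choose the next embedding with $V(\iota_{n+1,n}) < \delta_n$, where $\omega(\delta_n) < 2^{-n}$; then $V(\iota_{N,m}) < 2^{-n}$ for all $N > n \geq m$, which gives condition~(2) for the auxiliary system. A second, smaller omission: Proposition~\ref{prop:_variation_and_limit} does not ``already supply'' (2)~$\Rightarrow$~(1) verbatim, since it concerns systems in $\mathscr{K}_\mathcal{Z}$, i.e.\ with distinguished \emph{faithful} traces, whereas the system in the theorem is bare. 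One must first invoke (2)~$\Rightarrow$~(3) from Proposition~\ref{prop:_simplicity_and_monotraciality}: the limit is then simple and monotracial, its unique trace is faithful by simplicity, and its restrictions along the (automatically injective) connecting maps equip every $\mathcal{Z}_m$ with a faithful trace making each $\iota_{n,m}$ a morphism of $\mathscr{K}_\mathcal{Z}$, after which Proposition~\ref{prop:_variation_and_limit} applies.
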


It was shown by X.~Jiang and H.~Su that 
every unital $*$-endomorphism of $\mathcal{Z}$ is approximately inner.  
We shall conclude this section by partially recovering this result.  

\begin{prop}\label{prop:_endomorphisms}
Every $\mathscr{K}_\mathcal{Z}$-admissible endomorphism of 
$\langle \mathcal{Z}, \operatorname{tr} \rangle$ is approximately inner.  
\end{prop}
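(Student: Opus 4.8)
The plan is to verify approximate innerness directly at the finite stages of the system $\langle\mathcal{Z},\operatorname{tr}\rangle=\overline{\bigcup_n\mathcal{A}_n}$, where $\mathcal{A}_n=\langle\mathcal{Z}_{p_n,q_n},\tau_n\rangle$, using Proposition~\ref{prop:_inner_automorphisms} as the workhorse. Fix a finite set $F$ and $\varepsilon>0$; after a small perturbation I may assume $F\subseteq\mathcal{A}_{n_0}$ for some $n_0$. The goal is to produce a single unitary $w$ lying in some $\mathcal{A}_N$ (hence in $\mathcal{Z}$) with $\|\operatorname{Ad}(w)(f)-\sigma(f)\|<\varepsilon$ for all $f\in F$. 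To apply Proposition~\ref{prop:_inner_automorphisms} I must compare two unital $*$-homomorphisms $\mathcal{Z}_{p_{n_0},q_{n_0}}\to\mathcal{Z}_{p_N,q_N}$ whose normalized eigenvalue patterns are close: the standard inclusion $\iota_{N,n_0}$, and a finite-stage approximant $\psi$ of $\sigma|_{\mathcal{A}_{n_0}}$.

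First I would extract $\psi$. Since the limit is the Jiang--Su algebra, Theorem~\ref{thm:_characterization_of_jiang_su_algebra} gives a level $\ell\geq n_0$ with $V(\iota_{\ell,n_0})$ as small as desired. Admissibility of $\sigma$ is then invoked to obtain, for $N$ large, a genuine $\mathscr{K}_\mathcal{Z}$-morphism $\chi\colon\mathcal{A}_\ell\to\mathcal{A}_N$ with $\|\chi(g)-\sigma(g)\|$ small for all $g\in\iota_{\ell,n_0}[F]$; this is precisely the point at which $\varphi_\sigma$ being an approximate $\mathscr{K}_\mathcal{Z}$-isomorphism substitutes for the (here unproven) semiprojectivity of prime dimension drop algebras, and it is why the statement is only for \emph{admissible} endomorphisms. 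Setting $\psi:=\chi\circ\iota_{\ell,n_0}$, the crucial gain is automatic: by Proposition~\ref{prop:_approximate_diagonalizability} the eigenvalue functions of $\psi$ are the composites $t_b\circ u_a$ of those of $\iota_{\ell,n_0}$ and $\chi$, and since $\operatorname{Im}(t_b\circ u_a)\subseteq\operatorname{Im}t_b$ one has $V(\psi)\leq V(\iota_{\ell,n_0})$. Thus precomposition with a low-variation inclusion forces $\psi$ to have small variation while keeping $\psi\approx\sigma$ on $F$.

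It remains to check that $\iota_{N,n_0}$ and $\psi$ have close normalized eigenvalue patterns, after which Proposition~\ref{prop:_inner_automorphisms} yields the unitary $w\in\mathcal{A}_N$ and finishes the proof. Here I would use two facts. Both maps have the \emph{same} number of eigenvalues $k(N,n_0)$: for prime dimension drop algebras the remainder indices, hence the multiplicity, are determined by $p_{n_0},q_{n_0},p_N,q_N$ alone, as recorded after Proposition~\ref{prop:_approximate_diagonalizability}. Moreover $\chi$ and all the inclusions are trace preserving, so $\psi$ and $\iota_{N,n_0}$ induce the same trace $\tau_{n_0}$ on $\mathcal{A}_{n_0}$; equivalently, their eigenvalue functions share the same empirical distribution. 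Since both maps have variation $\leq V(\iota_{\ell,n_0})$ (for $\iota_{N,n_0}$ because variation does not increase along the system), each normalized pattern lies within $V(\iota_{\ell,n_0})$ of the quantile function of $\tau_{n_0}$ evaluated at the points $i/k(N,n_0)$, so $\max_i\|t_i^{\psi}-t_i^{\iota_{N,n_0}}\|_\infty\to 0$ as $\ell\to\infty$, which is exactly the hypothesis of Proposition~\ref{prop:_inner_automorphisms}.

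The main obstacle is the extraction of $\chi$ from admissibility: unwinding the definition of an approximate $\mathscr{K}_\mathcal{Z}$-isomorphism only furnishes \emph{joint} embeddings of $\mathcal{A}_\ell$ and $\mathcal{A}_N$ into a common object, and converting such a zigzag into an honest morphism \emph{into the distinguished system} $\{\mathcal{A}_N\}$ that approximates $\sigma$ on the relevant finite set and preserves the trace is the step requiring genuine care. A secondary technical point is the pattern-matching assertion of the third paragraph, whose proof is a routine but slightly delicate comparison of monotone step functions with prescribed distribution, leaning on the fact that the common eigenvalue count $k(N,n_0)$ tends to infinity.
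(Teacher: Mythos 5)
Your overall skeleton matches the paper's: approximate the admissible endomorphism $\sigma$ (the paper's $\rho$) on a finite set by a genuine $\mathscr{K}_\mathcal{Z}$-morphism landing in a finite stage of the system, then use smallness of variation together with Proposition~\ref{prop:_inner_automorphisms} to conjugate that morphism onto the canonical inclusion. But the step you yourself flag as ``the main obstacle'' --- producing, from admissibility alone, a genuine trace-preserving morphism $\chi\colon\mathcal{A}_\ell\to\mathcal{A}_N$ \emph{into the distinguished system} that approximates $\sigma$ on the relevant finite set --- is not a technical afterthought that can be deferred: it is the entire content of the paper's proof, and your proposal contains no argument for it. As you observe, admissibility only yields joint embeddings $(\theta_1,\theta_2)$ into some \emph{external} object $\langle\mathcal{Z}_{r,s},\sigma'\rangle$, and for an arbitrary regular presentation of $\langle\mathcal{Z},\operatorname{tr}\rangle$ it can be genuinely impossible to map $\langle\mathcal{Z}_{r,s},\sigma'\rangle$ trace-preservingly into any $\langle\mathcal{Z}_{p_N,q_N},\tau_N\rangle$: if every $\tau_N$ has atoms while $\sigma'$ is atomless, no trace-preserving embedding exists (a pullback of an atomic measure under an eigenvalue pattern is atomic), and if $rs$ is incompatible with $p_Nq_N$ the remainder indices force point masses at the endpoints. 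So the proof cannot be closed as written.

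The paper supplies exactly the two ingredients you are missing. First, it does not work with an arbitrary presentation: invoking Theorem~\ref{thm:_characterization_of_jiang_su_algebra}, it \emph{chooses} a regular system in which $p_nq_n$ divides $p_{n+1}q_{n+1}$, every $\tau_n$ is atomless, and every integer eventually divides $p_nq_n$. Second, with this system fixed, it converts the zigzag into an honest morphism: strictness of $(\varphi_\rho|_{F\times F'})|^{\mathcal{Z}_{p_n,q_n}\times\mathcal{Z}_{p_m,q_m}}+\varepsilon/4$ gives an exact joint embedding $(\theta_1,\theta_2)$ into some $\langle\mathcal{Z}_{r,s},\sigma'\rangle$; Proposition~\ref{prop:_embeddability_of_dimension_drop_algebras} embeds $\mathcal{Z}_{r,s}$ into some $\mathcal{Z}_{p_{m'},q_{m'}}$, the divisibility assumption kills the remainder indices, and atomlessness of $\tau_{m'}$ (via Lemma~\ref{lem:_transformations_of_measures}) upgrades this to a trace-preserving morphism $\zeta$; finally Proposition~\ref{prop:_inner_automorphisms} produces a unitary $u$ aligning $\iota_{N,m'}\circ\zeta\circ\theta_2$ with $\iota_{N,m}$ on $F'$, so that $\iota:=\operatorname{Ad}(u)\circ\iota_{N,m'}\circ\zeta\circ\theta_1$ approximates $\rho$ on $F$; the conjugation of $\iota$ onto the inclusion then proceeds as in your last step. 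Your remaining ingredients are sound --- the inequality $V(\chi\circ\iota_{\ell,n_0})\le V(\iota_{\ell,n_0})$, the equality of remainder indices, and the quantile comparison of normalized patterns (which, note, requires faithfulness of $\tau_{n_0}$, i.e.\ full support of the measure, so that its quantile function is continuous; this point is implicit in the paper's uses of Proposition~\ref{prop:_inner_automorphisms} as well) --- but without the choice of a suitable system and the zigzag-to-morphism construction, the argument has a hole exactly at its center.
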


\begin{proof}
Let 
\[
\begin{original}
\begin{tikzcd}
	\langle \mathcal{Z}_{p_1,q_1}, \tau_1 \rangle \arrow[r,"\iota_{2,1}"] & 
	\langle \mathcal{Z}_{p_2,q_2}, \tau_2 \rangle \arrow[r,"\iota_{3,2}"] & 
	\langle \mathcal{Z}_{p_3,q_3}, \tau_3 \rangle \arrow[r,"\iota_{4,3}"] & \dots
\end{tikzcd}
\end{original}
\begin{arxiv}
\xymatrix{
	\langle \mathcal{Z}_{p_1,q_1}, \tau_1 \rangle \ar[r]^{\iota_{2,1}} & 
	\langle \mathcal{Z}_{p_2,q_2}, \tau_2 \rangle \ar[r]^{\iota_{3,2}} & 
	\langle \mathcal{Z}_{p_3,q_3}, \tau_3 \rangle \ar[r]^{\iota_{4,3}} & \dots
}
\end{arxiv}
\]
be a regular sequence with the following property, 
the existence of which follows from Theorem~\ref{thm:_characterization_of_jiang_su_algebra}: 
\begin{enumerate}
\item
	$p_nq_n$ divides $p_{n+1}q_{n+1}$ and $\tau_n$ is atomless for all $n$.  
\item
	For any natural number $a$, there exists sufficiently large $n$ such that 
	$a$ divides $p_nq_n$.  
\end{enumerate}
We shall first show that if $\rho$ is a $\mathscr{K}_\mathcal{Z}$-admissible 
endomorphism of $\langle \mathcal{Z}, \operatorname{tr} \rangle$, 
then for any finite subset $F \subseteq \mathcal{Z}_{p_n,q_n}$ and any $\varepsilon > 0$, 
there exists a morphism $\iota$ 
from $\langle \mathcal{Z}_{p_n,q_n}, \tau_n \rangle$ 
to $\langle \mathcal{Z}_{p_N,q_N}, \tau_N \rangle$ with 
$\|\rho(f) - \iota(f)\| < \varepsilon$ for all $f \in F$.  
Take sufficiently large $m$ and 
so that for any $f \in F$, there exists $f' \in \mathcal{Z}_{p_m,q_m}$ with 
$\|\rho(f) - f'\| < \varepsilon/4$.  
We shall fix such $f'$ for each $f \in F$ and set $F' := \{f' \mid f \in F\}$.  
Put
\[
	\psi := (\varphi_\rho|_{F \times F'})
	|^{\mathcal{Z}_{p_n,q_n} \times \mathcal{Z}_{p_m,q_m}} 
	+ \varepsilon/4
\]
and note that this is a strict approximate $\mathscr{K}_\mathcal{Z}$-isomorphism, 
as $\rho$ is $\mathscr{K}$-admissible.  
Since $\psi$ is strict, there exists a joint $\mathscr{K}$-embedding 
$(\theta_1, \theta_2)$ of $\langle \mathcal{Z}_{p_n,q_n}, \tau_n \rangle$ and 
$\langle \mathcal{Z}_{p_m,q_m}, \tau_m \rangle$ into 
some object $\langle \mathcal{Z}_{r,s}, \sigma \rangle$ with 
$\varphi_{\theta_1,\theta_2} \leq \psi$, 
whence $\|\theta_1(f) - \theta_2(f')\| \leq \varepsilon/2$.  
Now by Proposition~\ref{prop:_embeddability_of_dimension_drop_algebras}, 
one can embed $\mathcal{Z}_{r,s}$ into $\mathcal{Z}_{p_{m'},q_{m'}}$ 
for some $m' > m$.  
By assumption~(2), we may assume that $rs$ divides $p_{m'}q_{m'}$, 
so the remainder indices vanish.  
Consequently, since $\tau_{m'}$ is atomless by assumption~(1), 
one can easily find 
a morphism $\eta$ from $\langle \mathcal{Z}_{r,s}, \sigma \rangle$ 
to $\langle \mathcal{Z}_{p_{m'},q_{m'}}, \tau_{m'} \rangle$.  
Since $V(\iota_{N,m''}) \to 0$ as $N \to \infty$ by 
Theorem~\ref{thm:_characterization_of_jiang_su_algebra}, 
one can find $N > m''$ and a unitary $u$ in $\mathcal{Z}_{p_N,q_N}$ with 
\[
	\bigl\|\bigl(\operatorname{Ad}(u) \circ \iota_{N,m'} \circ 
	\zeta \circ \theta_2\bigr)(f') - \iota_{N,m}(f')\bigr\| < \varepsilon/4
\]
for all $f' \in F'$, by Proposition~\ref{prop:_inner_automorphisms}.  
We set $\iota := \operatorname{Ad}(u) \circ \iota_{N,m'} \circ \zeta \circ \theta_1$.  
Then, for $f \in F$, we have 
\[
\begin{aligned}
	\|\rho(f) - \iota(f)\| 
	&\leq \|\rho(f) - f'\| \\ 
	&+ \bigl\|\iota_{N,m}(f') - 
	\bigl(\operatorname{Ad}(u) \circ \iota_{N,m'} \circ 
	\zeta \circ \theta_2\bigr)(f')\bigr\| \\
	&+ \bigl\|\bigl(\operatorname{Ad}(u) \circ \iota_{N,m'} \circ 
	\zeta \circ \theta_2\bigr)(f') - \bigl(\operatorname{Ad}(u) \circ \iota_{N,m'} \circ 
	\zeta \circ \theta_1\bigr)(f)\bigr\| \\
	&< \varepsilon, 
\end{aligned}
\]
as desired.  

Now, since $V(\iota_{M,N}) \to 0$ as $M \to \infty$, 
there exists sufficiently large $M$ and a unitary $v$ in $\mathcal{Z}_{p_M,q_M}$ with
$\bigl\|\bigl(\operatorname{Ad}(v) \circ \iota\bigr)(f) - f\bigr\| < \varepsilon$ 
for all $f \in F$, by Proposition~\ref{prop:_inner_automorphisms}
This implies $\|\rho(f) - \operatorname{Ad}(v^*)(f)\| < 2\varepsilon$, 
so $\rho$ is approximately inner, which completes the proof.  
\end{proof}

\begin{rem}
It was shown by the author that 
every UHF algebra can be recognized as a Fraïssé category of 
C*-algebras of all matrix-valued functions on cubes with distinguished faithful traces 
and diagonalizable morphisms, 
and that \emph{every} endomorphism of UHF algebra is automatically admissible 
\cite[Theorems 5.4 and~5.10]{masumoto16:_generalized_fraisse}.  
In view of this fact, one should be able to show that 
every endomorphism of $\mathcal{Z}$ is $\mathscr{K}$-admissible, 
although the author could not do that.   
\end{rem}

\noindent
\textbf{Acknowledgement.} The author would like to thank Yuhei Suzuki 
for stimulating conversations.  This work was supported by Research Fellow of the JSPS 
(no.~26--2990) and the Program for Leading Graduate Schools, MEXT, Japan.


\end{document}